\newtheorem{theorem}{Theorem}[section]
\newtheorem{proposition}[theorem]{Proposition}
\newtheorem{lemma}[theorem]{Lemma}
\newtheorem{corollary}[theorem]{Corollary}
\theoremstyle{definition}
\newtheorem{definition}[theorem]{Definition}
\newtheorem{example}[theorem]{Example}
\newtheorem{remark}[theorem]{Remark}
\newtheorem{question}[theorem]{Question}
\newcommand{\ir}{{\mathsf{Irr}}}
\newcommand{\cl}{{\rm cl}}
\newcommand{\ii}{{\rm int}}
\newcommand{\ua}{\mathord{\uparrow}}
\newcommand{\da}{\mathord{\downarrow}}
\newcommand{\mk}{\mathord{\mathsf{K}}}
\newcommand{\wdd}{\mathord{\mathsf{WD}}}
\newcommand{\dc}{\mathord{\mathsf{DC}}}
\journal{Topology and its Applications}
\begin{document}

\begin{frontmatter}



\title{Some open problems on well-filtered spaces and sober spaces \tnoteref{t1}}
\tnotetext[t1]{This research was supported by the National Natural Science Foundation of China (11661057); the Natural Science Foundation of Jiangxi Province, China (20192ACBL20045); and NIE ACRF (RI 3/16 ZDS), Singapore}

\author[X. Xu]{Xiaoquan Xu\corref{mycorrespondingauthor}}
\cortext[mycorrespondingauthor]{Corresponding author}
\ead{xiqxu2002@163.com}
\address[X. Xu]{School of Mathematics and Statistics,
Minnan Normal University, Zhangzhou 363000, China}
\author[D. Zhao]{Dongsheng Zhao}
\address[D. Zhao]{Mathematics and Mathematics Education,
National Institute of Education, \\
Nanyang Technological University,
1 Nanyang Walk, Singapore 637616}
\ead{dongsheng.zhao@nie.edu.sg}

\begin{abstract}
In the past few years, the research on sober spaces and well-filtered spaces has got some breakthrough progress. In this paper, we shall present a brief summarising survey on some of such development. Furthermore, we shall pose and illustrate  some open problems on well-filtered spaces and sober spaces.

\end{abstract}

\begin{keyword}
Well-filtered space; Sober space; Strong $d$-space; Co-sober space; Reflection; Smyth power space.

\MSC 54D99; 54B30; 54B20; 06B35

\end{keyword}




\end{frontmatter}


\section{Introduction}

Sobriety is probably the most important and useful property of non-Hausdorff topological spaces. It has been used in the characterizations of spectral spaces and $T_0$ spaces that are determined by their open set lattices. With the development of domain theory, another two properties also emerged as the very useful and important properties for non-Hausdorff topology theory: $d$-spaces and well-filtered spaces (see [1, 11-17, 19-37, 41-43, 45-60]). In the past few years, some remarkable progresses have been achieved in understanding such structures. In the current paper, we shall make a brief survey on some of these progresses, which concern the topics on sobriety of dcpos, existence of well-filterification, finer links between sobriety, coherency, locally compactness, Lawson compactness and well-filteredness.

After reviewing each part, we shall list some major open problems. These problems are not all new, some of them were already posed by other authors. The solutions of such problems, we believe, will further deepen our understanding of the relevant structures. Due to our information limitation, we are not able to include all recent research work on such topics in this paper.

\section{Preliminary}

We now recall some basic concepts and notations that will be used in the paper. For further details, we refer the reader to \cite{Engelking, redbook, Jean-2013}.

Let $P$ be a poset. For any $A\subseteq P$, let
$\mathord{\downarrow}A=\{x\in P: x\leq  a \mbox{ for some }
a\in A\}$ and $\mathord{\uparrow}A=\{x\in P: x\geq  a \mbox{
	for some } a\in A\}$. For each  $x\in P$, we write
$\mathord{\downarrow}x$ for $\mathord{\downarrow}\{x\}$ and
$\mathord{\uparrow}x$ for $\mathord{\uparrow}\{x\}$. A subset $A$ of $P$
is called a \emph{lower set} (resp., an \emph{upper set}) if
$A=\da A$ (resp., $A=\ua A$). Define $P^{(<\omega)}=\{F\subseteq P : F \mbox{~is a nonempty finite set}\}$ and $\mathbf{Fin} ~P=\{\ua F : F\in P^{(<\omega)}\}$. For a nonempty subset $A$ of $P$, define $\mathrm{max}(A)=\{a\in A : a \mbox{~ is a maximal element of~} A\}$ and $\mathrm{min}(A)=\{a\in A : a \mbox{~ is a minimal element of~} A\}$. The symbol $\mathbb{N}$ will denote the poset of all natural numbers with the usual order.

A nonempty subset $D$ of a poset $P$ is called \emph{directed} if every two
elements in $D$ have an upper bound in $D$. The set of all directed sets of $P$ is denoted by $\mathcal D(P)$. A poset $P$ is called a
\emph{directed complete poset}, or \emph{dcpo} for short, if  $\bigvee D$ exists in $P$ for every
$D\in \mathcal D(P)$. A subset $I\subseteq P$ is called an \emph{ideal} if $I$ is a directed lower subset of $P$. Let $\mathrm{Id} (P)$ denote the poset of all ideals of $P$ with the set inclusion order. Dually, we define the \emph{filters} and denote the poset of all filters of $P$ by $\mathrm{Filt}(P)$. The \emph{upper topology} on $P$, generated
by $\{P\setminus \da x : x\in P\}$ as a subbase, is denoted by $\upsilon (P)$. A subset $U$ of $P$ is \emph{Scott open} if
(i) $U=\mathord{\uparrow}U$ and (ii) for any directed subset $D$ for
which $\bigvee D$ exists, $\bigvee D\in U$ implies $D\cap
U\neq\emptyset$. All Scott open subsets of $P$ form a topology, called the \emph{Scott topology} on $P$ and
denoted by $\sigma(P)$. The space $\Sigma~\!\! P=(P,\sigma(P))$ is called the
\emph{Scott space} of $P$. The upper sets of $P$ form the (\emph{upper}) \emph{Alexandroff topology} $\alpha (P)$.

For a $T_0$ space $X$, the \emph{specialization order} $\le_X$ on $X$ is defined by  $x\leq_X y$ if{}f $x\in \overline{\{y\}}$). In the following, when a $T_0$ space is considered as a poset, the order shall means the specialization order provided a different one is specified. Let $\mathcal O(X)$ (resp., $\Gamma (X)$) be the set of all open subsets (resp., closed subsets) of space $X$. Define $\mathcal S_c(X)=\{\overline{{\{x\}}} : x\in X\}$ and $\mathcal D_c(X)=\{\overline{D} : D\in \mathcal D(X)\}$, where $\overline{D}$ is the closure of set $D$. A space $X$ is called a $d$-\emph{space} (or \emph{monotone convergence space}) if $X$ (with the specialization order) is a dcpo
 and $\mathcal O(X) \subseteq \sigma(X)$ (cf. \cite{redbook, Wyler}). For any dcpo $P$, $\Sigma~\!\! P$ is clearly a $d$-space. The category of all $d$-spaces and continuous mappings is denoted by $\mathbf{Top}_d$. For two spaces $X$ and $Y$, we use the symbol $X\cong Y$ to denote that $X$ and $Y$ are homeomorphic.

\begin{lemma}\label{d-space max point}
Let $X$ be a $d$-space. Then for  any nonempty closed subset $A$ of $X$,  $A=\da \mathrm{max}(A)$, and hence $\mathrm{max}(A)\neq\emptyset$.
\end{lemma}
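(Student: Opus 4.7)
The plan is a standard Zorn's lemma argument on $A$ equipped with the induced specialization order, exploiting the two defining features of a $d$-space: $X$ is a dcpo and $\mathcal O(X)\subseteq \sigma(X)$.

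First I would record two preparatory facts about the closed set $A$. Since $A$ is closed in a $T_0$ space, it is a lower set with respect to $\leq_X$: if $x\in A$ and $y\leq_X x$ then $y\in\overline{\{x\}}\subseteq \overline{A}=A$. Second, $A$ is closed under directed suprema of $X$. Indeed, given $D\in\mathcal D(X)$ with $D\subseteq A$, the join $\bigvee D$ exists in $X$ because $X$ is a dcpo; if $\bigvee D$ lay in $X\setminus A$, then $X\setminus A$, being open and therefore Scott open, would meet $D$, contradicting $D\subseteq A$. Hence $\bigvee D\in A$.

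Next, fix any $a\in A$ and consider the subposet $A_a=\{x\in A:a\leq_X x\}$, which is nonempty since $a\in A_a$. Every chain $C$ in $A_a$ is directed in $X$, so by the previous step $\bigvee C\in A$; moreover $\bigvee C\geq_X a$, so $\bigvee C\in A_a$ is an upper bound of $C$ inside $A_a$. Zorn's lemma then provides a maximal element $m$ of $A_a$. Since $A$ is a lower set, any $y\in X$ with $m\leq_X y$ and $y\in A$ automatically satisfies $y\in A_a$, so $m$ is in fact maximal in $A$, i.e.\ $m\in\mathrm{max}(A)$. Thus $a\in\da m\subseteq \da\mathrm{max}(A)$, giving $A\subseteq \da\mathrm{max}(A)$. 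The reverse inclusion follows from $\mathrm{max}(A)\subseteq A$ together with $A$ being a lower set. The nonemptiness of $\mathrm{max}(A)$ is then immediate from $A\neq\emptyset$.

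I expect no genuinely hard step; the only subtle point is the verification that $A$ is closed under directed suprema, which is exactly where both axioms of a $d$-space are used simultaneously (dcpo to form the join, $\mathcal O(X)\subseteq \sigma(X)$ to trap it inside $A$). Everything else is a routine application of Zorn's lemma plus the observation that closed sets are lower sets in the specialization order.
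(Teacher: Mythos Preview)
Your proof is correct and follows essentially the same Zorn's lemma strategy as the paper: the paper picks a maximal chain in $A$ through $x$ and takes its supremum, while you apply Zorn directly to $A_a=\ua a\cap A$, but both hinge on the same fact that $A$ absorbs directed suprema. One minor quibble: in showing that a maximal element $m$ of $A_a$ is maximal in $A$, the relevant reason is simply $a\leq_X m\leq_X y$ (transitivity), not that $A$ is a lower set.
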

\begin{proof}
 For $x\in A$, by Zorn's Lemma there is a maximal chain $C_x$ in $A$ with $x\in C_x$. Since $X$ is a $d$-space, $c_x=\bigvee C_x$ exists and $c_x\in A$. By the maximality of $C_x$, we have $c_x\in \mathrm{max}(A)$ and $x\leq c_x$. Therefore, $A\subseteq \da \mathrm{max}(A)\subseteq \da A=A$, and hence $A=\da \mathrm{max}(A)$.
\end{proof}

\begin{proposition}\label{d-space function} (\cite{redbook})
	For any $T_0$ space $X$ and  $d$-space $Y$, the function space $\mathbf{Top}_0(X, Y)$ equipped with the pointwise convergence topology is a $d$-space.
\end{proposition}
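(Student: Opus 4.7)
The plan is to verify the two conditions defining a $d$-space for the pointwise convergence topology on $F:=\mathbf{Top}_0(X,Y)$: namely that $F$ is a dcpo under its specialization order, and that every open set of $F$ is Scott open with respect to that order.

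First I would identify the specialization order. The subbasic opens of the pointwise topology are $[x,U]=\{h\in F : h(x)\in U\}$ for $x\in X$ and $U\in\mathcal O(Y)$. An easy chase through the definition $f\le_F g\iff f\in\overline{\{g\}}$ shows that $f\le_F g$ if{}f $f(x)\le_Y g(x)$ for every $x\in X$, i.e.\ the specialization order is the pointwise order. I would record this as a preliminary observation, since it is needed throughout.

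Next I would prove that $F$ is directed complete. Given a directed family $\{f_i\}_{i\in I}\subseteq F$, the set $\{f_i(x)\}_{i\in I}$ is directed in $Y$ for each $x$, so (since the $d$-space $Y$ is in particular a dcpo) the pointwise join $f(x):=\bigvee_i f_i(x)$ is well defined. The key point is continuity of $f$. Fix $U\in\mathcal O(Y)$ and $x\in f^{-1}(U)$. Because $Y$ is a $d$-space, $U\in\sigma(Y)$, so Scott openness gives some index $i_0$ with $f_{i_0}(x)\in U$; then $x\in f_{i_0}^{-1}(U)$, and the latter is open in $X$ by continuity of $f_{i_0}$. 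Since $f\ge f_{i_0}$ pointwise and $U$ is an upper set in the specialization order of $Y$, we obtain $f_{i_0}^{-1}(U)\subseteq f^{-1}(U)$, showing $f^{-1}(U)$ is open. Thus $f\in F$, and it is clearly the least upper bound of $\{f_i\}$ in the pointwise order, so $F$ is a dcpo.

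Finally I would show $\mathcal O(F)\subseteq\sigma(F)$. Let $W\in\mathcal O(F)$; it is automatically an upper set for $\le_F$. Suppose $\{f_i\}_{i\in I}$ is directed in $F$ with pointwise join $f\in W$. Choose a basic open neighbourhood of $f$ contained in $W$, say $\bigcap_{k=1}^{n}[x_k,U_k]$, so $f(x_k)\in U_k$ for $k=1,\dots,n$. Each $U_k$ is Scott open in $Y$, so for every $k$ there is an index $i_k$ with $f_{i_k}(x_k)\in U_k$. Directedness of $\{f_i\}$ yields some $i^\ast$ with $f_{i^\ast}\ge f_{i_k}$ for all $k$; since each $U_k$ is an upper set, $f_{i^\ast}(x_k)\in U_k$ for every $k$, giving $f_{i^\ast}\in\bigcap_{k=1}^n[x_k,U_k]\subseteq W$. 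Hence $W\in\sigma(F)$.

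The one genuine obstacle is the continuity of the pointwise supremum in the dcpo step; everything else is book-keeping, and both non-trivial steps rely on the same ingredient, namely that open sets of the $d$-space $Y$ are Scott open, which lets us convert a condition about a directed join in $Y$ into a condition satisfied at some stage of the directed family.
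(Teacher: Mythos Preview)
Your argument is correct and is essentially the standard proof (as in \cite{redbook}). Note, however, that the paper does not supply its own proof of this proposition: it merely cites \cite{redbook}, so there is no paper-proof to compare against. One small addition you may want to make explicit is the $T_0$ property of the pointwise topology on $F$ (needed so that $\le_F$ is a partial order); this follows immediately from the $T_0$ property of $Y$, since if $f\neq g$ then $f(x)\neq g(x)$ for some $x$, and a subbasic open $[x,U]$ separates them.
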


A nonempty subset $A$ of a $T_0$ space $X$ is called \emph{irreducible} if for any $\{F_1, F_2\}\subseteq \Gamma (X)$, $A \subseteq F_1\cup F_2$ implies $A \subseteq F_1$ or $A \subseteq  F_2$.  We denote by $\ir(X)$ (resp., $\ir_c(X)$) the set of all irreducible (resp., irreducible closed) subsets of $X$. Clearly, every subset of $X$ that is directed under $\leq_X$ is irreducible. A topological space $Y$ is called \emph{sober}, if for any  $F\in\ir_c(Y)$, there is a unique point $a\in Y$ such that $F=\overline{\{a\}}$, the closure of $\{a\}$. The category of all sober spaces and continuous mappings is denoted by $\mathbf{Sob}$. Let $\mathrm{OFilt(\mathcal O(X))}=\sigma (\mathcal O(X))\bigcap \mathrm{Filt}(\mathcal O(X))$. The members of $\mathrm{OFilt(\mathcal O(X))}$ are called open filters of $X$.

For any space $X$, let $\mk(X)$ be the set of all nonempty compact subsets of $X$. The order on $\mk(X)$ is usually taken as the reverse inclusion order.

For each $K\in \mk (X)$, let $\Phi (K)=\{U\in \mathcal O(X) : K\subseteq U\}$. Then $\Phi (K)\in \mathrm{OFilt(\mathcal O(X))}$ and $K=\bigcap \Phi (K)$. Obviously, $\Phi : \mk (X) \longrightarrow \mathrm{OFilt(\mathcal O(X))}, K\mapsto \Phi (K)$, is an order embedding.

For a $T_0$ space $X$, $\mathcal G\subseteq 2^{X}$ and $W\subseteq X$, let $\Diamond_{\mathcal G} W=\{G\in \mathcal G : G\bigcap W\neq\emptyset\}$ and $\Box_{\mathcal G} W=\{G\in \mathcal G : G\subseteq  A\}$. The symbols $\Diamond_{\mathcal G} W$ and $\Box_{\mathcal G} W$ will be simply written as $\Diamond A$ and $\Box A$ respectively if no ambiguity occur. The \emph{lower Vietoris topology} on $\mathcal{G}$ is the topology that has $\{\Diamond U : U\in \mathcal O(X)\}$ as a subbase, and the resulting space is denoted by $P_H(\mathcal{G})$. If $\mathcal{G}\subseteq \ir (X)$, then $\{\Diamond_{\mathcal{G}} U : U\in \mathcal O(X)\}$ is a topology on $\mathcal{G}$. The space $P_H(\Gamma(X)\setminus \{\emptyset\})$ is called the \emph{Hoare power space} or \emph{lower space} of $X$ and is denoted by $P_H(X)$ for short (cf. \cite{Schalk}). Clearly, $P_H(X)=(\Gamma(X)\setminus \{\emptyset\}, \upsilon(\Gamma(X)\setminus \{\emptyset\}))$. So $P_H(X)$ is always sober (see \cite[Corollary 4.10]{ZhaoHo} or \cite[Proposition 2.9]{xu-shen-xi-zhao1}).

\begin{remark} \label{eta continuous} Let $X$ be a $T_0$ space.
\begin{enumerate}[\rm (1)]
	\item If $\mathcal{S}_c(X)\subseteq \mathcal{G}$, then the specialization order on $P_H(\mathcal{G})$ is the set inclusion order, and the \emph{canonical mapping} $\eta_{X}: X\longrightarrow P_H(\mathcal{G})$, given by $\eta_X(x)=\overline {\{x\}}$, is an order and topological embedding (cf. \cite{redbook, Jean-2013, Schalk}).
    \item The space $X^s=P_H(\ir_c(X))$ with the canonical mapping $\eta_{X}: X\longrightarrow X^s$ is the \emph{sobrification} of $X$ (cf. \cite{redbook, Jean-2013}).
\end{enumerate}
\end{remark}

A subset $A$ of a $T_0$ space $X$ is called \emph{saturated} if $A$ equals the intersection of all open sets containing it (equivalently, $A$ is an upper set with respect to the specialization order). We shall use $\mathord{\mathsf{K}}(X)$ to
denote the set of all nonempty compact saturated subsets of $X$ equipped with the \emph{Smyth preorder}: for $K_1,K_2\in \mathord{\mathsf{K}}(X)$, $K_1\sqsubseteq K_2$ if{}f $K_2\subseteq K_1$. For $U\in \mathcal O(X)$, let $\Box U=\{K\in \mk (X) : K\subseteq  U\}$. The \emph{upper Vietoris topology} on $\mk (X)$ is the topology generated by $\{\Box U : U\in \mathcal O(X)\}$ as a base, and the resulting space is called the \emph{Smyth power space} or \emph{upper space} of $X$ and is denoted by $P_S(X)$ (cf. \cite{Heckmann, Klause-Heckmann, Schalk}).

For a nonempty subset $C$ of a $T_0$ space $X$, it is easy to see that $C$ is compact if{}f $\ua C\in \mk (X)$. Furthermore, we have the following useful result (see, e.g., \cite[pp.\! 2068]{E_2009}).

\begin{lemma}\label{COMPminimalset}  Let $X$ be a $T_0$ space and $C\in \mk (X)$. Then $C=\ua \mathrm{min}(C)$ and  $\mathrm{min}(C)$ is compact.
\end{lemma}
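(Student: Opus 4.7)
The plan is to prove the two assertions in the natural order: first $C=\ua\mathrm{min}(C)$ (which in particular yields $\mathrm{min}(C)\neq\emptyset$), and then use this identity to deduce compactness of $\mathrm{min}(C)$ essentially for free.

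For the inclusion $\ua\mathrm{min}(C)\subseteq C$, I would just note that $\mathrm{min}(C)\subseteq C$ and that $C$, being saturated, is an upper set in the specialization order. The substantive direction is $C\subseteq\ua\mathrm{min}(C)$. Fix $x\in C$ and put $D_x=\{y\in C:y\leq_X x\}$, ordered by the restriction of $\leq_X$. I plan to apply Zorn's lemma to $(D_x,\geq_X)$ to produce a $\leq_X$-minimal element of $D_x$, which will automatically be minimal in $C$ (any $z\in C$ with $z\leq_X m$ lies in $D_x$, so equals $m$ by minimality).

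The key step is verifying that every chain $L$ in $(D_x,\leq_X)$ has a lower bound in $D_x$. Here I would translate the order-theoretic problem into a topological one via the specialization order: $z\leq_X\ell$ iff $z\in\overline{\{\ell\}}$. Thus it suffices to find $z\in C\cap\bigcap_{\ell\in L}\overline{\{\ell\}}$. The family $\{C\cap\overline{\{\ell\}}:\ell\in L\}$ consists of nonempty closed subsets of the subspace $C$ (each contains $\ell$), and it is totally ordered by inclusion because $\ell\leq_X\ell'$ forces $\overline{\{\ell\}}\subseteq\overline{\{\ell'\}}$. In particular, this family has the finite intersection property, so compactness of $C$ delivers a point $z$ in the intersection, and by construction $z\leq_X\ell\leq_X x$ for every $\ell\in L$. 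This is the only step that requires real work; once it is in place, Zorn's lemma gives the desired $m\in\mathrm{min}(C)$ below $x$.

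For compactness of $\mathrm{min}(C)$, I would invoke the identity just established. Given any open cover $\{U_i\}_{i\in I}$ of $\mathrm{min}(C)$ by open subsets of $X$, each $U_i$ is saturated, so $\bigcup_{i\in I}U_i$ is an upper set containing $\mathrm{min}(C)$ and therefore contains $\ua\mathrm{min}(C)=C$. Compactness of $C$ extracts a finite subcover, which a fortiori covers $\mathrm{min}(C)$. No difficulty is expected in this last part; the main obstacle in the whole argument is the chain-bound step, and I have handled it by exploiting the interplay between the specialization preorder, saturation, and the finite intersection property for compact spaces.
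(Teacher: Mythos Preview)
Your proof is correct. The paper itself does not supply a proof of this lemma; it simply cites the result from \cite[pp.\ 2068]{E_2009} as known. Your argument---Zorn's lemma on $\da x\cap C$ with the chain-bound step handled via the finite intersection property for the closed sets $C\cap\overline{\{\ell\}}$, followed by the observation that open sets are upper sets so any open cover of $\mathrm{min}(C)$ covers $C=\ua\mathrm{min}(C)$---is the standard proof and is fully rigorous as written. The $T_0$ hypothesis is used exactly where you need it, namely to ensure the specialization preorder is antisymmetric so that Zorn's lemma applies and minimality is well-defined.
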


A space $X$ is called \emph{well-filtered} if it is $T_0$, and for any open set $U$ and any $\mathcal{K}\in\mathcal D(\mk (X))$, $\bigcap\mathcal{K}{\subseteq} U$ implies $K{\subseteq} U$ for some $K{\in}\mathcal{K}$. The category of all well-filtered spaces and  continuous mappings is denoted by $\mathbf{Top}_w$.

\begin{remark} \label{xi continuous} Let $X$ be a $T_0$ space. Then
\begin{enumerate}[\rm (1)]
	\item the specialization order on $P_S(X)$ is the Smyth order, that is, $\leq_{P_S(X)}=\sqsubseteq$;
    \item the \emph{canonical mapping} $\xi_X: X\longrightarrow P_S(X)$, $x\mapsto\ua x$, is an order and topological embedding (cf. \cite{Heckmann, Klause-Heckmann, Schalk}).
\end{enumerate}
\end{remark}

The Smyth power space construction defines  a covariant functor. More precisely, we have the following.

\begin{lemma}\label{Ps functor} (\cite{xu2020}) $P_S : \mathbf{Top}_0 \longrightarrow \mathbf{Top}_0$ is a covariant functor, where for any $f : X \longrightarrow Y$ in $\mathbf{Top}_0$, $P_S(f) : P_S(X) \longrightarrow P_S(Y)$ is defined by $P_S(f)(K)=\ua f(K)$ for all $ K\in\mk(X)$.
\end{lemma}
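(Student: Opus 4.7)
The plan is to verify the four requirements for $P_S$ to be a covariant endofunctor on $\mathbf{Top}_0$: (i) for each continuous $f:X\to Y$, the rule $K\mapsto \ua f(K)$ actually lands in $\mk(Y)$; (ii) the induced map $P_S(f):P_S(X)\to P_S(Y)$ is continuous; (iii) $P_S(\mathrm{id}_X)=\mathrm{id}_{P_S(X)}$; and (iv) $P_S(g\circ f)=P_S(g)\circ P_S(f)$ for composable continuous maps.

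For (i), since $P_S(Y)$ is a $T_0$ space by Remark \ref{xi continuous}(1), I just need $\ua f(K)\in\mk(Y)$. As $f$ is continuous and $K$ is compact, $f(K)$ is compact in $Y$; and the upper closure of a compact subset of a $T_0$ space is compact saturated (this is the standard consequence used just before Lemma \ref{COMPminimalset}). For (iii), since any $K\in \mk(X)$ is saturated, $\ua \mathrm{id}_X(K)=\ua K=K$, so the identity is preserved on the nose.

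For (ii), it suffices to check the preimage of each basic open set $\Box V$ of $P_S(Y)$, $V\in\mathcal O(Y)$. For $K\in \mk(X)$ we have
\[
K\in P_S(f)^{-1}(\Box V)\iff \ua f(K)\subseteq V\iff f(K)\subseteq V\iff K\subseteq f^{-1}(V),
\]
the middle equivalence using that the open set $V$ is already an upper set with respect to $\leq_Y$. Thus $P_S(f)^{-1}(\Box V)=\Box f^{-1}(V)\in \mathcal O(P_S(X))$, and continuity follows. For (iv), given $g:Y\to Z$, I must show $\ua g(\ua f(K))=\ua g(f(K))$ for every $K\in\mk(X)$. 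The inclusion $\supseteq$ is immediate from $f(K)\subseteq \ua f(K)$. For the reverse inclusion, any element of $g(\ua f(K))$ has the form $g(z)$ with $z\geq_Y f(x)$ for some $x\in K$; since continuous maps between $T_0$ spaces are monotone with respect to the specialization order, $g(z)\geq_Z g(f(x))\in g(f(K))$, so $g(z)\in\ua g(f(K))$, giving $\ua g(\ua f(K))\subseteq \ua g(f(K))$.

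No step poses a real obstacle; the only mild subtlety is the equality in (iv), which rests on the monotonicity of continuous maps with respect to the specialization order. Once this is noted, the verification is a short unpacking of the definitions of $\mk(-)$, $\Box(-)$ and the upper Vietoris topology.
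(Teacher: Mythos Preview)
Your proof is correct and constitutes the standard direct verification that $P_S$ is a covariant endofunctor on $\mathbf{Top}_0$. Note, however, that the present paper does not supply its own proof of this lemma: it merely states the result and cites \cite{xu2020}. Hence there is no in-paper argument to compare against. Your treatment of each of the four functorial conditions is clean; in particular, the equality $P_S(f)^{-1}(\Box V)=\Box f^{-1}(V)$ is exactly the right computation for continuity, and the only nonobvious point---the equality $\ua g(\ua f(K))=\ua g(f(K))$ needed for preservation of composition---is handled correctly via monotonicity of continuous maps with respect to the specialization order.
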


As in \cite{E_20182}, a topological space $X$ is \emph{locally hypercompact} if for each $x\in X$ and each open neighborhood $U$ of $x$, there is  $\ua F\in \mathbf{Fin}~X$ such that $x\in\ii\,\ua F\subseteq\ua F\subseteq U$. A space $X$ is called a $C$-\emph{space} if for each $x\in X$ and each open neighborhood $U$ of $x$, there is $u\in X$ such that $x\in\ii\,\ua u\subseteq\ua u\subseteq U$). A set $K\subseteq X$ is called \emph{supercompact} if for
any family $\{U_i : i\in I\}\subseteq \mathcal O(X)$, $K\subseteq \bigcup_{i\in I} U_i$  implies $K\subseteq U$ for some $i\in I$. It is easy to see that the supercompact saturated sets of $X$ are exactly the sets $\ua x$ with $x \in X$ (see \cite[Fact 2.2]{Klause-Heckmann}). It is well-known that $X$ is a $C$-space if{}f $\mathcal O(X)$ is a \emph{completely distributive} lattice (cf. \cite{E_2009}). A space $X$ is called \emph{core compact} if $(\mathcal O(X), \subseteq)$ is a \emph{continuous lattice} (cf. \cite{redbook}).


For a full subcategory $\mathbf{K}$ of $\mathbf{Top}_0$, the objects of $\mathbf{K}$ will be called $\mathbf{K}$-spaces. In \cite{Keimel-Lawson}, Keimel and Lawson proposed the following properties:

($\mathrm{K}_1$) Homeomorphic copies of $\mathbf{K}$-spaces are $\mathbf{K}$-spaces.

($\mathrm{K}_2$) All sober spaces are $\mathbf{K}$-spaces or, equivalently, $\mathbf{Sob}\subseteq \mathbf{K}$.

($\mathrm{K}_3$) In a sober space S, the intersection of any family of $\mathbf{K}$-subspaces is a $\mathbf{K}$-space.

($\mathrm{K}_4$) Continuous maps $f : S \longrightarrow T$ between sober spaces $S$ and $T$ are $\mathbf{K}$-continuous, that is, for every $\mathbf{K}$-subspace $K$ of $T$ , the inverse image $f^{-1}(K)$ is a $\mathbf{K}$-subspace of $S$.

In what follows, $\mathbf{K}$ always refers to a full subcategory $\mathbf{Top}_0$ containing $\mathbf{Sob}$, that is, $\mathbf{K}$ has ($\mathrm{K}_2$). $\mathbf{K}$ is said to be \emph{closed with respect to homeomorphisms} if $\mathbf{K}$ has ($\mathrm{K}_1$). We call $\mathbf{K}$ a \emph{Keimel-Lawson category} if it satisfies ($\mathrm{K}_1$)-($\mathrm{K}_4$).

\section{Well-filtered spaces and sober spaces}

 It is well-known that every sober space is well-filtered (see \cite{Hofmann-Mislove}) and every well-filtered space is a $d$-space (cf. \cite{Xi-Lawson-2017, xu-shen-xi-zhao1}). The Scott space of every continuous
dcpo is sober (see \cite{redbook}). Furthermore, the Scott space of every quasicontinuous
domain is sober (see \cite{quasicont}). Johnstone
\cite{johnstone-81} constructed the first dcpo whose Scott
space is non-sober. Soon after, Isbell \cite{isbell}
gave a complete lattice whose Scott space is non-sober. The general problem in this line is whether each object in
a classic class of lattices has a sober Scott space.

In 1992, Heckmann \cite{Heckmann} asked the following question:

\begin{center} Is every well-filtered dcpo sober in its Scott-topology?
\end{center}

The answer is negative. Kou \cite{Kou} constructed the
first dcpo whose Scott space is well-filtered but
non-sober (see \cite{jia-2018, Xi-Lawson-2017, zhao-xi-chen} for other different counterexamples).

In \cite{Xi-Lawson-2017}, Xi and Lawson proved a sufficient condition for a $T_0$ space to be well-filtered.

\begin{theorem}\label{xi-lawsonWF} (\cite{Xi-Lawson-2017})
	Let $X$ be a $d$-space with the property that $\downarrow\!\! (A\cap K)$ is closed for any
$A\in \Gamma (X)$ and $K\in \mk (X)$.  Then $X$ is well-filtered.
\end{theorem}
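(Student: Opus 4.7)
My approach is to argue by contradiction and locate, via Zorn's lemma, a minimal closed set $F_0$ sitting outside $U$ and meeting every $K\in\mathcal K$; the closure hypothesis will then force $F_0$ to collapse onto itself in a way that produces a point of $\bigcap\mathcal K$ lying outside $U$. Concretely, suppose there exist $U\in\mathcal O(X)$ and a directed family $\mathcal K\subseteq\mk(X)$ with $\bigcap\mathcal K\subseteq U$ but $K\cap(X\setminus U)\neq\emptyset$ for every $K\in\mathcal K$, and let
$$\mathcal F=\{F\in\Gamma(X):F\subseteq X\setminus U\text{ and }F\cap K\neq\emptyset\text{ for every }K\in\mathcal K\}.$$
Then $X\setminus U\in\mathcal F$, and any descending chain $\{F_i\}\subseteq\mathcal F$ has lower bound $\bigcap_iF_i\in\mathcal F$: for each $K\in\mathcal K$ the sets $F_i\cap K$ form a descending chain of nonempty closed subsets of the compact space $K$, so compactness yields $(\bigcap_iF_i)\cap K\neq\emptyset$. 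Zorn's lemma then provides a minimal $F_0\in\mathcal F$.

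The central step is to feed the hypothesis on $X$ into the minimality of $F_0$. For any $K\in\mathcal K$, the set $\da(F_0\cap K)$ is closed by hypothesis and, since $F_0$ is itself a lower set, contained in $F_0$. To show $\da(F_0\cap K)\in\mathcal F$, I would, for each $K'\in\mathcal K$, pick a common refinement $K''\in\mathcal K$ with $K''\subseteq K\cap K'$ (using directedness of $\mathcal K$ under the Smyth order) and observe that
$$\emptyset\neq F_0\cap K''\subseteq(F_0\cap K)\cap K'\subseteq\da(F_0\cap K)\cap K'.$$
Minimality of $F_0$ then forces $\da(F_0\cap K)=F_0$ for every $K\in\mathcal K$.

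To conclude, Lemma \ref{d-space max point} furnishes an element $m\in\mathrm{max}(F_0)$. For each $K\in\mathcal K$, the relation $m\in F_0=\da(F_0\cap K)$ supplies some $y\in F_0\cap K$ with $m\leq y$, and maximality of $m$ in $F_0$ forces $m=y\in K$; hence $m\in\bigcap\mathcal K\subseteq U$, contradicting $m\in F_0\subseteq X\setminus U$. The main obstacle I anticipate is the verification that $\da(F_0\cap K)$ still meets every other element of $\mathcal K$: it is tempting to test it only against $K$, but what is genuinely needed is a second application of directedness to land a common refinement $K''$ inside $\mathcal K$. Once this is in place, the closure hypothesis does the rest, and Lemma \ref{d-space max point} cleanly converts the collapse $F_0=\da(F_0\cap K)$ into the explicit point that triggers the contradiction.
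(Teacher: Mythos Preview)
Your argument is correct and is essentially the original Xi--Lawson proof. Note that the present paper does not supply its own proof of this theorem but merely cites \cite{Xi-Lawson-2017}; your Zorn/minimality argument together with the use of Lemma~\ref{d-space max point} reproduces that source's reasoning faithfully.
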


\begin{corollary}\label{Xi-Lawson result 1} (\cite{Xi-Lawson-2017})
If a dcpo $P$ is Lawson-compact (in particular, bounded-complete), then $\Sigma P$ is well-filtered.
\end{corollary}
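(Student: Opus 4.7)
The plan is to apply Theorem~\ref{xi-lawsonWF}. Since $P$ is a dcpo, $\Sigma P$ is automatically a $d$-space, so only the closure condition of the theorem requires verification: for every $A\in\Gamma(\Sigma P)$ and every $K\in\mk(\Sigma P)$, the set $\mathord{\downarrow}(A\cap K)$ must be shown to be Scott-closed.

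To check this, I would take a directed $D\subseteq\mathord{\downarrow}(A\cap K)$ with $d=\bigvee D$, and for each $x\in D$ fix a witness $a_x\in A\cap K$ with $a_x\geq x$. The goal reduces to producing some $a^*\in A\cap K$ with $a^*\geq d$, since then $d\in\mathord{\downarrow}(A\cap K)$. Equivalently, I must show that the filtered intersection
\[
\bigcap_{x\in D}\bigl(\mathord{\uparrow}x\cap A\cap K\bigr)\;=\;\mathord{\uparrow}d\cap A\cap K
\]
is nonempty (the equality uses directedness of $D$ and $d=\bigvee D$). Each member of this family contains the witness $a_x$, so is nonempty, and the family is downward filtered.

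The key ingredient is Lawson-compactness of $P$. Recall that the Lawson topology $\lambda(P)=\sigma(P)\vee\omega(P)$ refines both $\sigma(P)$ and the lower topology $\omega(P)$: consequently every Scott-closed set is Lawson-closed, and each principal filter $\mathord{\uparrow}x$ is closed in $\omega(P)$ and hence Lawson-closed. In the compact space $(P,\lambda(P))$ a Lawson-cluster point $a^*$ of the net $(a_x)_{x\in D}$ exists, and Lawson-closedness of $A$ and of each $\mathord{\uparrow}x$ force $a^*\in A$ and $a^*\geq x$ for every $x\in D$, whence $a^*\geq d$. The remaining assertion is $a^*\in K$: one combines Scott-compactness and saturation of $K$ with the lower-topological component of $\lambda(P)$ to confine the cluster point inside $K$.

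The main obstacle is precisely this last step. Since $K$ is only required to be Scott-compact and saturated, and need not be Lawson-closed a priori, the containment $a^*\in K$ demands a genuine interaction among the Scott-compactness of $K$, its upper-set structure, and the lower topology on $P$; this is the technical heart of the argument. Once it is in place, Theorem~\ref{xi-lawsonWF} yields that $\Sigma P$ is well-filtered. For the parenthetical bounded-complete case, it suffices to invoke the classical fact that every bounded-complete dcpo is Lawson-compact, whereupon the preceding argument applies.
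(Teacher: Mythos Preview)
The paper does not supply a proof of this corollary; it simply cites \cite{Xi-Lawson-2017} and positions the result as a consequence of Theorem~\ref{xi-lawsonWF}. Your overall strategy---verify the closure hypothesis of Theorem~\ref{xi-lawsonWF} using Lawson compactness---is therefore exactly the implicit route the paper suggests.

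However, your argument has a genuine gap, and you have identified it yourself without closing it. The sentence ``one combines Scott-compactness and saturation of $K$ with the lower-topological component of $\lambda(P)$ to confine the cluster point inside $K$'' is not a proof; it is a hope. Concretely: a Lawson cluster point of a net lying in $K$ will belong to $K$ only if $K$ is Lawson-closed, and a Scott-compact saturated set is \emph{not} Lawson-closed in an arbitrary dcpo. You must use the Lawson compactness of $P$ in an essential way here, and nothing in your outline does so---the only place you invoke Lawson compactness is to produce the cluster point $a^{*}$, which would work in any compact auxiliary topology. So as written, the verification of the hypothesis of Theorem~\ref{xi-lawsonWF} is incomplete, and this missing step is the entire content of the passage from Theorem~\ref{xi-lawsonWF} to Corollary~\ref{Xi-Lawson result 1}.

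For the parenthetical bounded-complete case there is in fact a direct argument that bypasses the obstacle entirely and does not need Lawson compactness: for each $y\in D$ the bound $a_y\geq x,y$ forces $x\vee y$ to exist with $x\vee y\leq a_y\in A$, hence $x\vee y\in A$; then $\{x\vee y:y\in D\}$ is directed with supremum $x\vee d\in \mathord{\uparrow}x\cap A$, so $d\in\mathord{\downarrow}(\mathord{\uparrow}x\cap A)$. Combined with Lemma~\ref{Scott compact closed} this already gives $\mathord{\downarrow}(A\cap K)$ Scott-closed and hence well-filteredness via Theorem~\ref{xi-lawsonWF}. You may find it cleaner to present this case separately rather than routing it through the general Lawson-compact assertion whose proof you have not completed.
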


It follows from Theorem \ref{xi-lawsonWF} or Corollary \ref{Xi-Lawson result 1} that Isbell's non-sober complete lattice is well-filtered. Note that Johnstone's dcpo is countable and Isbell's complete lattice is not countable. In 2019, in a talk \cite{Jung19} given in National Institute of Education, Singapore, Achim Jung posed the following problem which is still open.

\begin{question}\label{Jung problem1} (\cite{Jung19})  Is there a countable complete lattice that has a non-sober Scott topology?
\end{question}

Isbell's non-sober complete lattice is not distributive. Thus in 1994, Jung also asked
whether there is a distributive complete lattice whose Scott
space is non-sober (see \cite[Exercises 7.3.19-6]{AJ94} or \cite{Jung19}).

In \cite{xuxizhao}, using Isbell's lattice, we give a positive answer
to Jung's above problem.

\begin{theorem}\label{spacial frame Scott is non-sober} (\cite{xuxizhao}) There is a spatial frame whose
Scott space is non-sober.
\end{theorem}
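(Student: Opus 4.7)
The plan is to realise the desired frame as the frame $\sigma(L)$ of Scott-open sets of Isbell's non-sober complete lattice $L$. Since $\sigma(L)=\mathcal O(\Sigma L)$ is automatically a frame and automatically spatial (every frame of the form $\mathcal O(X)$ is spatial), the entire substance of the proof lies in verifying that $\Sigma \sigma(L)$ is non-sober.

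I would approach this by transporting Isbell's witness of non-sobriety from $\Sigma L$ to $\Sigma\sigma(L)$. Concretely, pick $A\in\ir_c(\Sigma L)$ that is not the closure of any point of $L$, and associate to $A$ a Scott-closed family $\mathcal A\subseteq\sigma(L)$ built out of the Scott-open sets $U_x:=L\setminus\da x$ with $x\in A$, namely the downward closure (in $\sigma(L)$) of $\{U_x:x\in A\}$. The three things to check are: (i) $\mathcal A$ is Scott-closed in $\sigma(L)$, i.e.\ closed under directed unions of Scott-open subsets of $L$; (ii) $\mathcal A$ is irreducible in $\Sigma\sigma(L)$, which should follow by pulling back any Scott-closed decomposition $\mathcal A=\mathcal C_1\cup\mathcal C_2$ to a Scott-closed decomposition of $A$ in $\Sigma L$ and invoking irreducibility of $A$; and (iii) $\mathcal A$ has no maximum in $\sigma(L)$, which should reflect the fact that $A$ itself has no maximum in $L$.

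The main obstacle is the interplay between (iii) and (i): directed joins in $\sigma(L)$ are unions, so naive choices of $\mathcal A$ tend to collapse into a principal downset $\da(L\setminus A)$, giving the unwanted maximum $L\setminus A$. Avoiding this collapse requires that each directed union of $U_{x_i}$'s with $x_i\in A$ still lie below some single $U_x$ with $x\in A$, which in turn hinges on the combinatorial rigidity of Isbell's construction — specifically the way $A$'s failure to have a greatest element is reflected at the level of complements of principal lower sets. This is where Isbell's concrete lattice, rather than an abstract non-sober complete lattice, is used essentially, and constitutes the bulk of the technical work in \cite{xuxizhao}.
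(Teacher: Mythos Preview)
Your choice of frame---$\sigma(L)=\mathcal O(\Sigma L)$ for Isbell's complete lattice $L$---agrees with the construction in \cite{xuxizhao}, and as you say it is automatically a spatial frame. The difficulty is entirely in showing that $\Sigma\sigma(L)$ is non-sober, and here your proposed witness cannot work.

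Unwinding your definition, $\mathcal A=\da_{\sigma(L)}\{U_x:x\in A\}=\{V\in\sigma(L):A\not\subseteq V\}$ (since $V\subseteq U_x$ iff $x\notin V$). But this set \emph{always} has a greatest element, namely $U_{\bigwedge A}$. Indeed, $A$ is a nonempty lower set in the complete lattice $L$, so $m:=\bigwedge A\in A$; hence $U_m\in\mathcal A$. Conversely, if $V\in\mathcal A$ then some $a\in A$ lies outside the upper set $V$, and $m\leq a$ forces $m\notin V$, i.e.\ $V\subseteq U_m$. Thus $\mathcal A=\da_{\sigma(L)}U_{\bigwedge A}=\overline{\{U_{\bigwedge A}\}}$ is a point closure in $\Sigma\sigma(L)$, regardless of which irreducible closed $A\subseteq L$ you begin with. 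The ``collapse'' you flag in your final paragraph is therefore not an obstacle to be overcome by the combinatorics of Isbell's lattice; it occurs unconditionally for any nonempty Scott-closed $A$.

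The structural reason is that $x\mapsto U_x$ is order-\emph{reversing}: the absence of a maximum in $A$ becomes the absence of a \emph{minimum} in $\{U_x:x\in A\}$, which is invisible to the downward closure in $\sigma(L)$. (This also undermines your plan for step (ii): an antitone map is never Scott-continuous in the required direction, so there is no way to pull a Scott-closed decomposition of $\mathcal A$ back to one of $A$.) The argument in \cite{xuxizhao} does rely on explicit features of Isbell's construction, but the irreducible closed subset of $\sigma(L)$ it exhibits is not obtained by pushing a single irreducible closed $A\subseteq L$ forward along $x\mapsto U_x$; a different transfer is needed.
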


Note that the dual of a spatial frame need not be a frame. Thus a question naturally arising is the following.

\begin{question}\label{lattice of closed sets Scott sober question} Is there a complete lattice with enough co-primes (i.e.,
isomorphic to the lattice of all closed subsets of a topological space) that
has a non-sober Scott topology?
\end{question}

In \cite{EG85}, it is shown that for the complete Boolean algebra $B$ of all regular open subsets of the reals line, the Scott space $\Sigma B$ is not a topological join-semilattice (and hence the Scott topology $\sigma (B\times B)$ is properly larger than the product topology $\sigma (B)\times \sigma (B)$). It is natural to wonder whether $\Sigma B$ is sober. Thus Ern\'e \cite{ELX19} asked the following.

\begin{question}\label{regur open sets Scott sober question} (\cite{ELX19})  Let $B$ be the complete Boolean algebra of all regular open
subsets of the reals line. Is the Scott space $\Sigma B$ sober?
\end{question}

If the answer would be in the affirmative, we would have an example of a sober complete Scott space that fails to be a topological join-semilattice; if it would be in the negative, we would have a second, more natural example of a non-sober Scott space of a complete lattice (which is really a complete Boolean algebra).

Another such type of problem is the following.

\begin{question}\label{cBa Scott sober question} Is there a complete Boolean algebra that has a non-sober
Scott topology?
\end{question}

One of the most important result on sober spaces is the Hofmann-Mislove Theorem (see \cite[Theorem 2.16]{Hofmann-Mislove} or \cite[Theorem II-1.20 and Theorem II-1.21]{redbook}).

\begin{theorem}\label{Hofmann-Mislove theorem} \emph{(Hofmann-Mislove Theorem)} For a $T_0$ space $X$, the following conditions are equivalent:
\begin{enumerate}[\rm (1)]
            \item $X$ is a sober space.
            \item  For any $\mathcal F\in \mathrm{OFilt}(\mathcal O(X))$, there is a $K\in \mk (X)$ such that $\mathcal F=\Phi (K)$.
            \item  For any $\mathcal F\in \mathrm{OFilt}(\mathcal O(X))$, $\mathcal F=\Phi (\bigcap \mathcal F)$.
\end{enumerate}
\end{theorem}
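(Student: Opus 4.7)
My plan is to establish the cycle $(1) \Rightarrow (2) \Rightarrow (3) \Rightarrow (1)$.

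For $(2) \Leftrightarrow (3)$, I would use the observation that any $K \in \mk(X)$ is saturated, so $K = \bigcap \Phi(K)$. This immediately gives $(2) \Rightarrow (3)$: if $\mathcal{F} = \Phi(K)$ then $\bigcap \mathcal{F} = K$ and $\Phi(\bigcap \mathcal{F}) = \Phi(K) = \mathcal{F}$. Conversely, given (3), I would set $K := \bigcap \mathcal{F}$; then $\Phi(K) = \mathcal{F}$ by hypothesis, and $K$ is saturated as an intersection of opens. To verify compactness, I would take any cover $\{U_i\}_{i \in I}$ of $K$, replace it by the directed family of finite unions, note that the total $\bigcup_i U_i$ lies in $\Phi(K) = \mathcal{F}$, and invoke Scott-openness of $\mathcal{F}$ to extract a single finite union already in $\mathcal{F} = \Phi(K)$, which covers $K$.

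For $(3) \Rightarrow (1)$, I would take $A \in \ir_c(X)$ and form the ``irreducibility filter'' $\mathcal{F}_A := \{U \in \mathcal{O}(X) : U \cap A \neq \emptyset\}$. Irreducibility of $A$ closes $\mathcal{F}_A$ under binary meets, and upward closure and Scott-openness are straightforward. Then (3) produces $K := \bigcap \mathcal{F}_A$ with $\Phi(K) = \mathcal{F}_A$. I would first note $K \cap A \neq \emptyset$ (from $X \setminus A \notin \mathcal{F}_A$), and then show that for every $x \in K$ one has $A \subseteq \overline{\{x\}}$ (otherwise $X \setminus \overline{\{x\}}$ would meet $A$, hence lie in $\Phi(K)$, yet fail to contain $x$). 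Choosing $x \in K \cap A$ forces $A = \overline{\{x\}}$, and $T_0$ handles uniqueness.

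The main content will be $(1) \Rightarrow (2)$. I would again set $K := \bigcap \mathcal{F}$; the inclusion $\mathcal{F} \subseteq \Phi(K)$ is immediate. For the reverse, I would argue by contradiction: suppose some $U \in \Phi(K)$ satisfies $U \notin \mathcal{F}$, and work inside $\mathcal{A} := \{A \in \Gamma(X) : A \subseteq X \setminus U \text{ and } X \setminus A \notin \mathcal{F}\}$. This family is nonempty since $X \setminus U \in \mathcal{A}$, and I would show it is closed under intersections of descending chains: the complementary opens of a chain form a directed family outside $\mathcal{F}$, whose union remains outside $\mathcal{F}$ by Scott-openness. Zorn then supplies a minimal $A_0 \in \mathcal{A}$. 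If $A_0 = B_1 \cup B_2$ properly with $B_i$ closed, minimality puts each $X \setminus B_i$ into $\mathcal{F}$, so $X \setminus A_0 \in \mathcal{F}$ by the filter property, contradicting $A_0 \in \mathcal{A}$; hence $A_0$ is irreducible closed, and sobriety delivers $a_0$ with $\overline{\{a_0\}} = A_0$. Since $X \setminus \overline{\{a_0\}} \notin \mathcal{F}$, every $V \in \mathcal{F}$ meets $\overline{\{a_0\}} = \da a_0$ and, being an upper set, contains $a_0$; so $a_0 \in K \subseteq U$, contradicting $a_0 \in A_0 \subseteq X \setminus U$. Thus $\Phi(K) = \mathcal{F}$, and compactness of $K$ will then follow exactly as in the $(3) \Rightarrow (2)$ argument.

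The delicate part will be this Zorn argument: Scott-openness of $\mathcal{F}$ is precisely what closes $\mathcal{A}$ under intersections of chains, while the filter property is precisely what makes minimal elements irreducible. Sobriety then provides a generic point, and the upper-set character of opens is what converts ``$V$ meets $\overline{\{a_0\}}$'' into ``$a_0 \in V$'', which is the key to the final contradiction.
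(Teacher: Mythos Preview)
Your proposal is correct and follows the classical proof of the Hofmann--Mislove Theorem (essentially the argument in \cite{Hofmann-Mislove} and \cite[Theorem II-1.20]{redbook}). Note, however, that the paper does not supply its own proof of this statement: it merely quotes the theorem with a reference, so there is nothing to compare against beyond the cited sources, whose argument your proposal reproduces faithfully.
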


By Hofmann-Mislove Theorem, $\Phi : \mk (X) \longrightarrow \mathrm{OFilt(\mathcal O(X))}$ is an order isomorphism if and only if $X$ is sober.

For locally compact well-filtered spaces, we have the following well-known result (see, e.g., \cite{redbook, Jean-2013, Kou}).

\begin{theorem}\label{SoberLC=CoreC}  For a $T_0$ space $X$, the following conditions are equivalent:
\begin{enumerate}[\rm (1)]
	\item $X$ is locally compact and sober.
	\item $X$ is locally compact and well-filtered.
	\item $X$ is core compact and sober.
\end{enumerate}
\end{theorem}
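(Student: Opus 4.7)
The plan is to prove the cycle (1) $\Rightarrow$ (2) $\Rightarrow$ (3) $\Rightarrow$ (1). The first implication is immediate, since local compactness concerns $X$ alone and the excerpt already records that every sober space is well-filtered.

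For (2) $\Rightarrow$ (3), local compactness trivially implies core compactness by a standard observation: if $x \in \ii\,K \subseteq K \subseteq U$ with $K$ compact saturated, then $\ii\,K \ll U$ in $\mathcal{O}(X)$. The real content is proving sobriety. Given an irreducible closed set $A$, I would consider
$$\mathcal{K}^{\ast} = \{K \in \mk(X) : \ii\,K \cap A \neq \emptyset\}$$
ordered by the Smyth order $\sqsubseteq$. Two observations are central. First, $\mathcal{K}^{\ast}$ is directed: for $K_1, K_2 \in \mathcal{K}^{\ast}$, irreducibility of $A$ gives $\ii\,K_1 \cap \ii\,K_2 \cap A \neq \emptyset$, and local compactness applied at a point of that intersection produces $K_3 \in \mathcal{K}^{\ast}$ with $K_3 \subseteq \ii\,K_1 \cap \ii\,K_2$. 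Second, setting $K_{\infty} = \bigcap \mathcal{K}^{\ast}$, well-filteredness forces $K_{\infty} \cap A \neq \emptyset$: otherwise $K_{\infty} \subseteq X \setminus A$, so some $K \in \mathcal{K}^{\ast}$ would satisfy $K \subseteq X \setminus A$, contradicting $\ii\,K \cap A \neq \emptyset$. Pick any $x \in K_{\infty} \cap A$. For any open $U$ meeting $A$, local compactness produces $K \in \mathcal{K}^{\ast}$ with $K \subseteq U$, whence $x \in K_{\infty} \subseteq K \subseteq U$; this shows $x$ lies in every open set meeting $A$, i.e., $A \subseteq \overline{\{x\}}$. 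The reverse inclusion is immediate from $x \in A$ and closedness of $A$, and uniqueness of $x$ follows from $T_0$-ness.

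For (3) $\Rightarrow$ (1), fix $x \in U$ open. Continuity of the lattice $\mathcal{O}(X)$ gives $V$ with $x \in V \ll U$. The set $\mathcal{F} = \{W \in \mathcal{O}(X) : V \ll W\}$ is a Scott-open filter: closure under binary meets follows from $V = V \cap V \ll W_1 \cap W_2$, valid because meet is Scott-continuous in the frame $\mathcal{O}(X)$; Scott-openness follows from the interpolation property (if $V \ll \bigcup^{\uparrow} W_i$, interpolate $V \ll V' \ll \bigcup W_i$ so that $V' \subseteq W_i$ for some $i$, giving $V \ll W_i$). Sobriety together with Theorem \ref{Hofmann-Mislove theorem} then yields $K \in \mk(X)$ with $\mathcal{F} = \Phi(K)$, and unwinding definitions shows $V \subseteq K \subseteq U$ (since $V \ll W$ implies $V \subseteq W$, and $U \in \mathcal{F}$). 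Hence $x \in V \subseteq \ii\,K \subseteq K \subseteq U$, proving local compactness.

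The main obstacle will be picking the right directed family in the sobriety step of (2) $\Rightarrow$ (3). The naive candidate $\{K \in \mk(X) : K \cap A \neq \emptyset\}$ fails to be directed, since intersections of two compact saturated sets meeting $A$ may miss $A$ altogether, and $A$ itself need not be compact. The fix is to restrict to those $K$ whose \emph{interior} already meets $A$: this is exactly the condition that allows irreducibility of $A$ to cooperate with local compactness to produce a common Smyth-refinement, and it is also precisely what makes well-filteredness extract a point of $A$ out of $K_{\infty}$ by the simple open-complement argument above.
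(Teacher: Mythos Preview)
The paper does not supply its own proof of this theorem; it is quoted as a well-known result with references to \cite{redbook, Jean-2013, Kou}. So there is no in-paper argument to compare against, and one can only assess your proposal on its own merits. Your implications (1)$\Rightarrow$(2) and (2)$\Rightarrow$(3) are correct and follow the standard route (the filtered family $\mathcal K^\ast$ of compact saturated sets whose interiors meet $A$, combined with well-filteredness, is exactly the right device).

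There is, however, a genuine gap in your argument for (3)$\Rightarrow$(1). You assert that $\mathcal{F}=\{W\in\mathcal O(X):V\ll W\}$ is closed under binary meets ``because meet is Scott-continuous in the frame $\mathcal O(X)$''. Scott-continuity of a map does \emph{not} force it to preserve the way-below relation, and in particular $\ll$ need not be multiplicative in a continuous frame. A concrete witness: let $X=\{a,b\}\cup\mathbb N$ carry the Alexandroff topology of the order in which $a,b$ are incomparable and every $n\in\mathbb N$ lies strictly above both $a$ and $b$ (the $n$'s being pairwise incomparable). One checks that $X$ is sober and locally compact, hence core compact. Taking $V=\mathbb N$, $W_1=\ua a$, $W_2=\ua b$, one has $V\ll W_1$ and $V\ll W_2$ (each $W_i$ is compact open, being a smallest neighbourhood), yet $W_1\cap W_2=\mathbb N=V$ and $V\not\ll V$ because $\mathbb N$ is the directed union of its finite subsets. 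Thus your $\mathcal F$ is not a filter here, and the appeal to the Hofmann--Mislove Theorem is blocked.

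The standard repair is to use the fact, valid in any continuous lattice via iterated interpolation, that the Scott-open \emph{filters} form a basis for the Scott topology (see, e.g., \cite{redbook}). Since $\dua V=\{W:V\ll W\}$ is Scott-open and contains $U$, one may choose a Scott-open filter $\mathcal G$ with $U\in\mathcal G\subseteq\dua V$; then Theorem~\ref{Hofmann-Mislove theorem} yields $K\in\mk(X)$ with $\mathcal G=\Phi(K)$, and one gets $V\subseteq\bigcap\mathcal G=K\subseteq U$ exactly as you intended.
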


In \cite{jia-2018}, Jia Xiaodong asks the following question.

\begin{center} Is every core compact and well-filtered space sober?
\end{center}

This problem was first solved by Lawson and Xi \cite{Lawson-Xi}, and later answered by Xu, Shen, Xi and Zhao \cite{xu-shen-xi-zhao1, xu-shen-xi-zhao2} using a different method.

\begin{theorem}\label{core compact WF is sober} (\cite{Lawson-Xi, xu-shen-xi-zhao1, xu-shen-xi-zhao2})
 Every core compact well-filtered space is sober.
\end{theorem}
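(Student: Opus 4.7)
The plan is to take an irreducible closed set $A\subseteq X$ and produce a point $x\in X$ with $A=\overline{\{x\}}$; together with the $T_0$ axiom this gives sobriety. I would first associate to $A$ the Scott-open filter $\mathcal F_A=\{U\in\mathcal O(X):U\cap A\neq\emptyset\}$: irreducibility makes $\mathcal F_A$ a filter on $\mathcal O(X)$, and $A\neq\emptyset$ makes it Scott-open. Since $X$ is core compact, $\mathcal O(X)$ is a continuous lattice, so $\mathcal F_A$ is generated, in the way-below sense, by pairs $V\ll U$ with $V\in\mathcal F_A$.

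The technical heart is to convert this way-below data into a filtered family $\mathcal K=\{K_U\}_{U\in\mathcal F_A}\subseteq\mk(X)$ of compact saturated subsets with $K_U\subseteq U$ and $K_U\cap A\neq\emptyset$. A natural route passes through the sobrification $X^s=P_H(\ir_c(X))$: since $\mathcal O(X)\cong\mathcal O(X^s)$, the space $X^s$ is core compact and sober, hence locally compact, so the point $A\in X^s$ has compact saturated neighbourhoods inside every basic open $\Diamond U$. Using an interpolation $V'\ll V\ll U$ in $\mathcal F_A$ together with the functoriality of $P_S$ (Lemma~\ref{Ps functor}) applied to $\eta_X$, one transports these compact saturated neighbourhoods from $X^s$ back to compact saturated sets $K_U$ in $X$ itself.

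Once $\mathcal K$ has been built, well-filteredness takes over. The family is filtered under the Smyth order, and each $K_U$ meets $A$; if $\bigcap\mathcal K\cap A$ were empty, i.e.\ $\bigcap\mathcal K\subseteq X\setminus A$, well-filteredness would yield some $K_U\subseteq X\setminus A$, contradicting the construction. Picking $x\in\bigcap\mathcal K\cap A$, one has $x\in U$ for every $U\in\mathcal F_A$: every open set that meets $A$ contains $x$. If some $y\in A$ failed $y\leq_X x$, a separating open $U$ with $y\in U\not\ni x$ would lie in $\mathcal F_A$ yet omit $x$, a contradiction; hence $x$ is a maximum of $A$. Lemma~\ref{d-space max point} (available since well-filteredness implies the $d$-space property) then gives $A=\da\max(A)=\da x=\overline{\{x\}}$.

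The main obstacle is the second step. Core compactness supplies abundant way-below relations in $\mathcal O(X)$ but not compact saturated neighbourhoods in $X$; that would require local compactness, which for sober spaces is equivalent to core compactness but is precisely what we are trying to establish. Circumventing this apparent circularity is delicate, and it is where the two known proofs diverge: Lawson and Xi extract the $K_U$ by hand via a Rudin-type argument on filtered families of finite upper sets, while Xu, Shen, Xi and Zhao exploit the $P_S$ functor together with a double interpolation in $\mathcal O(X)$ and the well-filteredness of $X$ itself. After this construction, the closing argument is routine.
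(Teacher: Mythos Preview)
Your overall architecture---associate to $A$ the Scott-open filter $\mathcal F_A$, build a filtered family $\mathcal K\subseteq\mk(X)$ with each $K_U\subseteq U$ and $K_U\cap A\neq\emptyset$, then extract a generic point from $\bigcap\mathcal K\cap A$ via well-filteredness---is sound, and the closing paragraph is correct (in fact once you know that every open set meeting $A$ contains $x$ and that $x\in A$, you already have $A=\da x=\overline{\{x\}}$ without invoking Lemma~\ref{d-space max point}). The gap is in your proposed mechanism for step~4. Applying $P_S$ to $\eta_X$ yields $P_S(\eta_X):P_S(X)\to P_S(X^s)$, $K\mapsto\ua\eta_X(K)$, which goes in the \emph{forward} direction; it gives no way to transport compact saturated neighbourhoods of the point $A\in X^s$ back into $\mk(X)$. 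Pulling back along $\eta_X$ does not help either: $\eta_X^{-1}(Q)$ is saturated but need not be compact, and can even be empty---take $Q=\ua_{X^s}\{A\}$ for the very $A$ under study when $A$ has no generic point and no upper bound in $X$. So the circularity you correctly flag is not circumvented by the device you describe; as written, the production of the $K_U$ is not justified.

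The paper's own route to Theorem~\ref{core compact WF is sober} is also different from either option you sketch in your last paragraph. It passes through Section~5: Theorem~\ref{Core compt is WD} establishes that every core compact $T_0$ space is a $\mathsf{WD}$ space (each irreducible closed set is well-filtered determined), and Theorem~\ref{soberequiv} then gives that a well-filtered $\mathsf{WD}$ space is sober. No filtered family in $\mk(X)$ is constructed; instead one shows, for an arbitrary continuous $f$ from $X$ into a well-filtered target, that $\overline{f(A)}$ is a point-closure, and then specialises to $f=\mathrm{id}_X$. The Lawson--Xi proof does stay inside $X$ and does produce compact saturated interpolants between $V\ll U$, but by a Rudin-type minimality argument exploiting well-filteredness directly, not by descent from the sobrification.
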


In addition, we have proved the following.

\begin{theorem}\label{WF CI is sober} (\cite{xu-shen-xi-zhao2})
 Every first countable well-filtered $T_0$ space is sober.
\end{theorem}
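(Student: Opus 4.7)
I would argue by contradiction. Assume $X$ is a first countable well-filtered $T_0$ space that is not sober, so some $A\in\ir_c(X)$ is not the closure of any single point. The plan is to produce a descending filtered family $\{K_n\}_{n\in\mn}\subseteq\mk(X)$ with $K_n\cap A\neq\emptyset$ for every $n$, yet $\bigcap_{n\in\mn}K_n\cap A=\emptyset$; well-filteredness applied to the open set $X\setminus A\supseteq\bigcap_n K_n$ would then force some $K_n\subseteq X\setminus A$, contradicting $K_n\cap A\neq\emptyset$.

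Since every well-filtered space is a $d$-space, Lemma~\ref{d-space max point} gives $A=\da\mathrm{max}(A)$ with $\mathrm{max}(A)\neq\emptyset$, and the assumption on $A$ forces $|\mathrm{max}(A)|\geq 2$. Picking incomparable $a_1,a_2\in\mathrm{max}(A)$ together with $T_0$-separating open neighborhoods, the irreducibility of $A$ provides a point of $A$ in their intersection, which lies below some further $a_3\in\mathrm{max}(A)\setminus\{a_1,a_2\}$. Iterating this step yields a countably infinite collection $\{a_n:n\in\mn\}\subseteq\mathrm{max}(A)$ of pairwise incomparable maximal points, together with basic open neighborhoods pairwise separating them.

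For each $a_n$, first countability furnishes a decreasing open neighborhood base $\{V_m(a_n)\}_{m\in\mn}$; note that $\bigcap_m V_m(a_n)=\ua a_n$ in any first countable $T_0$ space. Via a diagonal enumeration of $\mn\times\mn$, I would then build compact saturated sets of the form $K_n=\ua F_n$, where each $F_n\subseteq A$ is a finite set combining some element of the tail $\{a_k:k\geq n\}$ (so that $a_n\in K_n\cap A$) with carefully chosen points drawn from the $V_m(a_k)\cap A$'s to drive the diagonalization. Finite upward closures are always compact saturated, so each $K_n\in\mk(X)$, and the filtered condition $K_{n+1}\subseteq K_n$ can be arranged by nesting the $F_n$'s appropriately.

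The main obstacle is guaranteeing $\bigcap_n K_n\cap A=\emptyset$: the diagonalization must exhaust, in countably many stages, every $z\in A$ that might survive in the intersection. Because $A=\da\mathrm{max}(A)$, every such $z$ lies below some $a_{k(z)}$, and the countable base at $a_{k(z)}$ is the tool that eventually separates $z$ from the generators of $K_n$. The delicate point is to arrange the index bookkeeping so that the three conditions — compactness, meeting $A$, and the eventual expulsion of every $z\in A$ — are simultaneously preserved; this is precisely where the interaction between first countability and the structure of $\mathrm{max}(A)$ in a well-filtered $d$-space becomes essential, and it is where I expect an auxiliary lemma (for instance on compactness of $\ua$-closures of convergent sequences in first countable $T_0$ spaces) will do the real work.
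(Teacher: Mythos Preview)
The survey paper does not actually prove Theorem~\ref{WF CI is sober}; it merely cites \cite{xu-shen-xi-zhao2}. The surrounding framework, however, makes the intended route clear: one shows that first countability forces every irreducible closed set to be a Rudin (indeed an $\omega$-Rudin) set, and then invokes Theorem~\ref{WFwdc} (well-filtered spaces are exactly those in which every Rudin set is a point closure). In other words, the argument establishes a structural property of first-countable spaces independent of well-filteredness, and only afterwards plugs in well-filteredness via the general characterization.

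Your proposal aims instead at a direct contradiction, and while the spirit is related, there is a genuine gap. The step ``the diagonalization must exhaust, in countably many stages, every $z\in A$'' cannot work as stated: you have selected only countably many maximal points $a_1,a_2,\dots$, but $\mathrm{max}(A)$ may be uncountable, and a given $z\in A$ need not lie below any of your chosen $a_k$. So the mechanism you describe --- using the countable base at $a_{k(z)}$ to eventually separate $z$ from the generators of $K_n$ --- simply does not reach most of $A$. A second, equally serious issue is the filtered condition $K_{n+1}\subseteq K_n$: with $K_n=\ua F_n$ this requires every element of $F_{n+1}$ to dominate some element of $F_n$, and nothing in your irreducibility-plus-neighborhood-base construction produces such order relations. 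These two requirements pull in opposite directions, and your proposal does not explain how to reconcile them.

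The ``auxiliary lemma'' you anticipate --- that the saturation of a convergent sequence together with its limit is compact --- is indeed relevant and true, but by itself it does not close the gap. If one takes two incomparable maximal points $a,a'\in\mathrm{max}(A)$, decreasing bases $\{U_n\},\{V_n\}$, and $x_n\in U_n\cap V_n\cap A$, then $K_m=\ua(\{x_n:n\ge m\}\cup\{a\})$ is a genuine descending chain in $\mk(X)$, but one computes $\bigcap_m K_m=\ua a$, which still meets $A$; so no well-filteredness contradiction arises directly. What this family \emph{does} witness is that $A$ is (up to Rudin's Lemma) an $\omega$-Rudin set, and it is precisely at this point that the machinery of Theorem~\ref{WFwdc} --- rather than a bare diagonalization --- is needed to finish.
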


By Theorem \ref{core compact WF is sober}, Theorem \ref{SoberLC=CoreC} can be strengthened to the following one.

\begin{theorem}\label{SoberLC=CoreCNew}  For a $T_0$ space $X$, the following conditions are equivalent:
\begin{enumerate}[\rm (1)]
	\item $X$ is locally compact and sober.
	\item $X$ is locally compact and well-filtered.
	\item $X$ is core compact and sober.
    \item $X$ is core compact and well-filtered.
\end{enumerate}
\end{theorem}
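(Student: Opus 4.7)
The plan is to reduce this theorem to the already-established results by slotting condition (4) into the known equivalence. Theorem~\ref{SoberLC=CoreC} gives $(1)\Leftrightarrow (2)\Leftrightarrow (3)$, so the entire task is to tie condition (4) to this loop. I would do it by proving the two implications $(2)\Rightarrow (4)$ and $(4)\Rightarrow (3)$, which, together with the previously known chain, closes the circle.

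For $(2)\Rightarrow (4)$ I would invoke the standard fact that every locally compact $T_0$ space is core compact: given an open neighborhood $U$ of $x$, pick $K\in\mk(X)$ with $x\in\ii\, K\subseteq K\subseteq U$; then $\ii\, K\ll U$ in $\mathcal O(X)$, which makes $\mathcal O(X)$ a continuous lattice. So locally compact well-filtered implies core compact well-filtered, and this step is immediate. For $(4)\Rightarrow (3)$ I would simply quote Theorem~\ref{core compact WF is sober}, which asserts precisely that core compactness together with well-filteredness forces sobriety; hence a core compact well-filtered space is core compact and sober, i.e.\ satisfies (3).

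Putting it together, I would write: $(1)\Leftrightarrow (2)\Leftrightarrow (3)$ by Theorem~\ref{SoberLC=CoreC}; $(2)\Rightarrow (4)$ because local compactness implies core compactness; $(4)\Rightarrow (3)$ by Theorem~\ref{core compact WF is sober}. This completes the cycle and establishes the fourfold equivalence. There is no real obstacle here: once Theorem~\ref{core compact WF is sober} is available, the addition of condition (4) to the Keimel--Lawson-type equivalence is formal, and the only point requiring any care is the observation that local compactness implies core compactness, which is classical and short.
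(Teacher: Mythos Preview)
Your proposal is correct and matches the paper's approach exactly: the paper simply remarks that Theorem~\ref{SoberLC=CoreC} is strengthened by invoking Theorem~\ref{core compact WF is sober}, and your write-up just spells out the obvious way to close the cycle through condition~(4).
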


Jia \cite{jia-2018} also asked the following question.

\begin{question}\label{Scott core compact is sober question} (\cite{jia-2018})
If $L$ is a meet-continuous dcpo and $\Sigma P$ is core compact, is $\Sigma P$ sober?
\end{question}

The following related question arises naturally.

\begin{question}\label{Scott locally compact is sober question}
If the Scott topology on a dcpo $P$ is locally compact, is $\Sigma L$ sober?
\end{question}

\section{Scott sober dcpos and Scott well-filtered dcpos}

In \cite{EL08}, the sober posets are defined and studied. A poset $P$ is said to be \emph{sober} if there
exists a sober topology $\tau$ on $P$ which is compatible with the original order of $P$ (i.e., $\cl_\tau \{x\} =\da x$ for each $x\in P$).

In a similar manner, we introduce the well-filtered posets.

\begin{definition}\label{WF poset} A poset $P$ is said to be \emph{well}-\emph{filtered} if there
exists a well-filtered topology $\tau$ on $P$ which is compatible with the original order of $P$.
\end{definition}

Clearly, sober posets are well-filtered and well-filtered posets are dcpos.

\begin{example}\label{dcpo is not well-filtered} (Johnstone's dcpo)  Let $\mathbb{J}=\mathbb{N}\times (\mathbb{N}\cup \{\infty\})$ with ordering defined by $(j, k)\leq (m, n)$ if{}f $j = m$ and $k \leq n$, or $n =\infty$ and $k\leq m$ (see Figure 1).

\begin{figure}[ht]
	\centering
	\includegraphics[height=4.5cm,width=4.5cm]{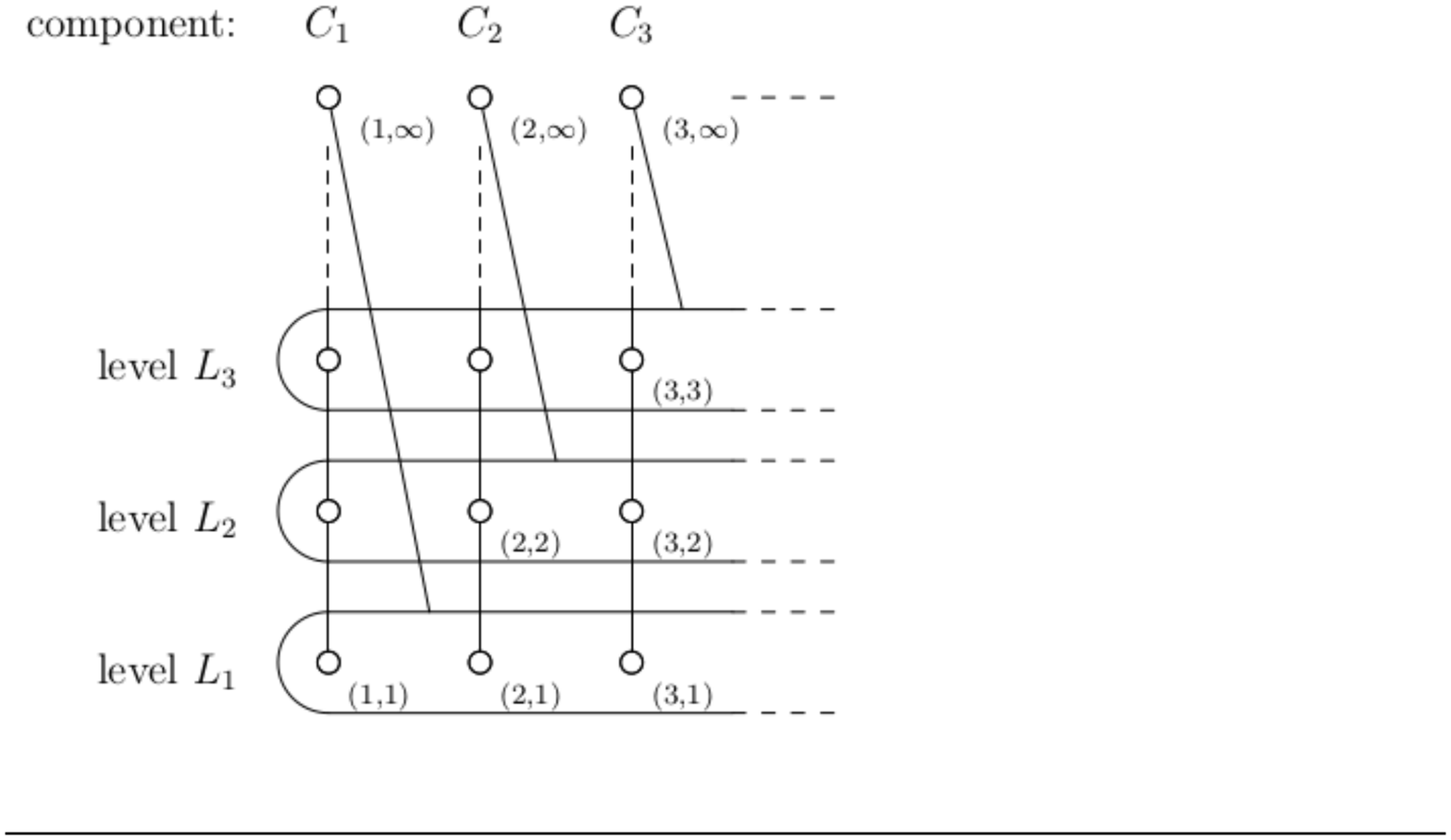}
	\caption{Johnstone's dcpo $\mathbb{J}$}
\end{figure}

$\mathbb{J}$ is a well-known dcpo constructed by Johnstone in \cite{johnstone-81}. Now we show that $\mathbb{J}$ is not well-filtered. Suppose, on the contrary, that there
exists a well-filtered topology $\tau$ on $\mathbb{J}$ which is compatible with the original order of $\mathbb{J}$. Clearly, $\bigcap_{n\in \mathbb{N}}(\ua (1, n)\cap \ua (2,1))=\emptyset$, and $\ua (1, n)\cap \ua (2,1)=\{(m, \infty): n\leq m\}$ is a compact saturated subset in $\Sigma \mathbb{J}$, and hence a compact saturated subset in $(\mathbb{J}, \tau)$ since $\tau\subseteq \sigma (\mathbb{J})$. By the well-filteredness of $(\mathbb{J}, \tau)$, $\ua (1, n_0)\cap \ua (2,1)=\emptyset$ for some $n_0\in \mathbb{N}$, a contradiction. Thus $\mathbb{J}$ is not well-filtered. In particular, $\Sigma~\!\mathbb{J}$ is not well-filtered (see \cite[Exercise 8.3.9]{Jean-2013}).
\end{example}

It is still not known whether there is a well-filtered dcpo that is  not
sober. For Isblle's lattice $L$ constructed in \cite{isbell}, $\Sigma~\!\! L$ is non-sober, and by Corollary \ref{Xi-Lawson result 1}, $\Sigma~\!\! L$ is well-filtered, and hence $L$ is well-filtered. But we do not know whether $L$ is sober.

\begin{question}\label{WF poset question} Characterize the well-filtered dcpos.
\end{question}

In order to emphasize the Scott topology, we introduce the following notions.

\begin{definition}\label{Scott dcpo WF dcpo} Let $P$ be a dcpo.
\begin{enumerate}[\rm (1)]
\item $P$ is said to be \emph{Scott sober} if $\Sigma P$ is sober.
\item $P$ is said to be \emph{Scott well-filtered} if $\Sigma P$ is well-filtered.
\end{enumerate}
\end{definition}

Regarding these, we have  the following five problems.

\begin{question}\label{WF dcpo charac question} Find an order characterization for
Scott well-filtered dcpos (cf. the first problem below \cite[Exercise VII-1.14]{redbook}).
\end{question}

\begin{question}\label{Scott sober dcpo prod question} Let $P, Q$ be Scott sober dcpos. Must the product $P\times Q$ be Scott sober?
\end{question}

\begin{question}\label{Scott well-filtered dcpo prod charac question} Let $P, Q$ be well-filtered dcpos. Must the product $P\times Q$ be Scott well-filtered?
\end{question}

If $P$ is a domain, then one can prove that for any sober dcpo $Q$, the product $P\times Q$ is sober. Thus it is natural to wonder which other dcpos also own this property.

\begin{question}\label{Scott well-filtered dcpo prod question} Characterize those Scott sober dcpos $P$ for which the products $P\times Q$ is Scott sober for every Scott sober dcpo $Q$.
\end{question}

\begin{question}\label{Scott sober dcpo prod charac question} Characterize those Scott well-filtered dcpos $P$ for which the products $P\times Q$ is Scott well-filtered for every Scott well-filtered dcpo $Q$.
\end{question}

Another problem concerning the sober dcpos is the following one.

\begin{question}\label{equivalent-weaker sober property}
Is there a topological property $p$ which is weaker than the sobriety for general $T_0$ topological spaces but equivalent to the sobriety for Scott spaces of dcpos?
\end{question}

\section{Rudin sets and well-filtered determined sets}

Rudin's Lemma is a very useful tool in non-Hausdorff topology and plays a crucial role in domain theory (see [1, 12, 13, 19-24, 30, 41, 42, 49-53, 55]). Marry Ellen Rudin \cite{Rudin} proved her lemma by transfinite methods, using the Axiom of Choice.
Heckman and Keimel \cite{Klause-Heckmann} obtained the following topological variant of Rudin's Lemma.

\begin{lemma}\label{t Rudin} \emph{(Topological Rudin's Lemma)} Let $X$ be a topological space and $\mathcal{A}$ an
irreducible subset of the Smyth power space $P_S(X)$. Then every closed set $C {\subseteq} X$  that
meets all members of $\mathcal{A}$ contains an minimal irreducible closed subset $A$ that still meets all
members of $\mathcal{A}$.
\end{lemma}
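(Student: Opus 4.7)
The plan is to produce the desired minimal irreducible closed set by a standard Zorn's Lemma argument on a suitable family of closed sets, and then to convert the irreducibility of $\mathcal{A}$ inside $P_S(X)$ into irreducibility of the minimal element inside $X$. Concretely, I would work with
\[
\mathcal{F}=\{F\subseteq C : F\text{ is closed in }X\text{ and }F\cap K\neq\emptyset\text{ for every }K\in\mathcal{A}\},
\]
ordered by inclusion, and look for a $\subseteq$-minimal element, which I will then show is automatically irreducible.

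First I would note $C\in\mathcal{F}$, so $\mathcal{F}\neq\emptyset$, and verify the hypothesis of Zorn's Lemma in its ``minimal element'' form. Take a descending chain $\mathcal{C}\subseteq\mathcal{F}$ and set $F_0=\bigcap\mathcal{C}$; clearly $F_0$ is closed and contained in $C$, so the only point to check is that $F_0$ still meets every $K\in\mathcal{A}$. For each such $K$, the family $\{F\cap K : F\in\mathcal{C}\}$ consists of nonempty closed subsets of $K$ and is totally ordered by inclusion, hence has the finite intersection property; since $K$ is compact, $\bigcap_{F\in\mathcal{C}}(F\cap K)=F_0\cap K\neq\emptyset$. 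Thus $F_0\in\mathcal{F}$, and Zorn's Lemma delivers a minimal element $A\in\mathcal{F}$; by construction $A$ is a closed subset of $C$ that meets every member of $\mathcal{A}$.

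It remains to show $A$ is irreducible, and this is the step where the hypothesis on $\mathcal{A}$ really enters. Suppose for contradiction that $A=A_1\cup A_2$ with each $A_i$ a proper closed subset of $A$. By minimality of $A$, neither $A_i$ lies in $\mathcal{F}$, so there exist $K_1,K_2\in\mathcal{A}$ with $K_1\cap A_1=\emptyset$ and $K_2\cap A_2=\emptyset$; equivalently, $K_i\in\Box(X\setminus A_i)$ in $P_S(X)$ for $i=1,2$. Thus the basic open sets $\Box(X\setminus A_1)$ and $\Box(X\setminus A_2)$ of $P_S(X)$ both meet $\mathcal{A}$. The irreducibility of $\mathcal{A}$ in $P_S(X)$ (stated dually: any two open sets meeting $\mathcal{A}$ meet it simultaneously) then produces some $K\in\mathcal{A}$ with $K\subseteq X\setminus A_1$ and $K\subseteq X\setminus A_2$, i.e., $K\cap(A_1\cup A_2)=K\cap A=\emptyset$, contradicting $A\in\mathcal{F}$. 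Hence $A$ is irreducible, finishing the proof.

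The main subtlety, and where I expect any potential obstacle, is the chain step: it is precisely the compactness of the members of $\mathcal{A}$ that makes $F_0\cap K$ nonempty, so without that ingredient the Zorn argument collapses. The irreducibility step, once phrased in terms of the basic opens $\Box(X\setminus A_i)$ of the Smyth power space, is then a direct unpacking of the definition of an irreducible subset of $P_S(X)$.
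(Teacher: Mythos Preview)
Your proof is correct; the Zorn's Lemma argument on $\mathcal{F}$, with compactness of the members of $\mathcal{A}$ handling chains and the basic-open characterization of irreducibility in $P_S(X)$ handling the irreducibility of the minimal element, goes through exactly as you wrote. Note that the paper does not give its own proof of this lemma but simply quotes it from Heckmann--Keimel \cite{Klause-Heckmann}; your argument is essentially the standard one given there.
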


Applying Lemma \ref{t Rudin} to the Alexandroff topology on a poset $P$, one obtains  the original Rudin's Lemma.

\begin{corollary}\label{rudin} \emph{(Rudin's Lemma)} Let $P$ be a poset, $C$ a nonempty lower subset of $P$ and $\mathcal F\in \mathbf{Fin}~P$ a filtered family with $\mathcal F\subseteq\Diamond C$. Then there exists a directed subset $D$ of $C$ such that $\mathcal F\subseteq \Diamond\da D$.
\end{corollary}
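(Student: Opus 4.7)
The plan is to deduce Rudin's Lemma as a direct corollary of the Topological Rudin's Lemma by endowing $P$ with the Alexandroff topology $\alpha(P)$, which makes the combinatorial data of the poset visible as topological data on a single space $X = (P,\alpha(P))$.

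First, I would translate the ingredients. The closed sets of $X$ are exactly the lower subsets of $P$, so the nonempty lower set $C$ is closed in $X$. Each $\ua F \in \mathbf{Fin}\,P$ is a finite union of principal upper sets, hence nonempty compact saturated, so $\mathcal F \subseteq \mk(X)$. The filteredness of $\mathcal F$ with respect to inclusion is exactly directedness with respect to the Smyth preorder $\sqsubseteq$; since $\le_{P_S(X)} = \sqsubseteq$ by Remark \ref{xi continuous}(1), $\mathcal F$ is directed in the specialization order of $P_S(X)$, and every directed subset of a $T_0$ space is irreducible. The hypothesis $\mathcal F \subseteq \Diamond C$ says precisely that $C$ meets every member of $\mathcal F$.

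Then I would apply Lemma \ref{t Rudin} with $\mathcal A = \mathcal F$ and the closed set $C$ to obtain a minimal irreducible closed subset $A \subseteq C$ of $X$ that still meets every $\ua F \in \mathcal F$. The main technical step, and the only nontrivial point, is to recognize that the irreducible closed subsets of $(P,\alpha(P))$ are exactly the sets of the form $\da D$ with $D$ a directed subset of $P$. One direction is straightforward: if $D$ is directed and $\da D \subseteq F_1 \cup F_2$ with $F_1, F_2$ closed (hence lower) and each $F_i$ omitting some $d_i \in D$, then an upper bound $d \in D$ of $d_1, d_2$ must lie in some $F_i$, forcing $d_i \in F_i$ by lower-closedness, a contradiction. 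For the converse, given an irreducible closed $A$ and $a, b \in A$, the sets $A \setminus \ua a$ and $A \setminus \ua b$ are proper closed subsets of $A$ (using that $\ua a, \ua b$ are open in $\alpha(P)$); if no common upper bound of $a, b$ lay in $A$, these two sets would cover $A$, contradicting irreducibility. Thus $A = \da D$ for some directed $D \subseteq A \subseteq C$, and $\mathcal F \subseteq \Diamond A = \Diamond \da D$ delivers the desired conclusion.
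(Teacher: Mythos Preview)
Your proposal is correct and follows exactly the approach the paper indicates: the paper simply states that applying Lemma~\ref{t Rudin} to the Alexandroff topology on $P$ yields the original Rudin's Lemma, without spelling out the details. Your write-up supplies precisely those details (compactness of the sets $\ua F$ in $\alpha(P)$, irreducibility of the filtered family in $P_S(X)$, and the identification of irreducible closed subsets of $(P,\alpha(P))$ with sets $\da D$ for $D$ directed), so there is nothing to add or correct.
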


\begin{definition}\label{DCspace} (\cite{xu-shen-xi-zhao1})
	A $T_0$ space $X$ is called a \emph{directed closure space}, $\mathsf{DC}$ \emph{space} for short, if $\ir_c(X)=\mathcal{D}_c(X)$, that is, for each $A\in \ir_c(X)$, there exists a directed subset of $X$ such that $A=\overline{D}$.
\end{definition}

It is easy to verify that closed subspaces, retracts and products of $\dc$ spaces are again  $\dc$ spaces (see \cite{xu-shen-xi-zhao1}).

For a $T_0$ space $X$ and $\mathcal{K}\subseteq \mathord{\mathsf{K}}(X)$, let $M(\mathcal{K})=\{A\in \Gamma (X) : K\bigcap A\neq\emptyset \mbox{~for all~} K\in \mathcal{K}\}$ (that is, $\mathcal A\subseteq \Diamond A$) and $m(\mathcal{K})=\{A\in \Gamma (X) : A \mbox{~is a minimal menber of~} M(\mathcal{K})\}$.

Based on the topological Rudin's Lemma, we introduce a new type of spaces --- Rudin spaces (see \cite{Shenchon, xu-shen-xi-zhao1}).

\begin{definition}\label{Rudin space WD space}
	Let $X$ be a $T_0$ space and $A$ a nonempty subset of $X$.
\begin{enumerate}[\rm (1)]
\item $A$ is said to have the \emph{Rudin property}, if there exists a filtered family $\mathcal K\subseteq \mathord{\mathsf{K}}(X)$ such that $\overline{A}\in m(\mathcal K)$ (that is, $\overline{A}$ is a minimal closed set that intersects all members of $\mathcal K$). Let $\mathsf{RD}(X)=\{A\in \Gamma (X) : A\mbox{~has Rudin property}\}$. The sets in $\mathsf{RD}(X)$ will also be called \emph{Rudin sets}.
\item $X$ is called a \emph{Rudin space}, $\mathsf{RD}$ \emph{space} for short, if $\ir_c(X)=\mathsf{RD}(X)$, that is, every irreducible closed set of $X$ is a Rudin set.
\end{enumerate}
\end{definition}

The Rudin property is called the \emph{compactly filtered property} in \cite{Shenchon}. Here, in order to emphasize its root from (topological) Rudin's Lemma, we call such a property the Rudin property.

Now we define another type of spaces --- $\mathbf{K}$-determined spaces.

\begin{definition}\label{KD subset} (\cite{xu20}) Let $\mathbf{K}$ be a full category of $\mathbf{Top}_0$ containing $\mathbf{Sob}$ and $X$ a $T_0$ space.
	 A subset $A$ of $X$ is called a $\mathbf{K}$-\emph{determined set}, provided for any continuous mapping $ f:X\longrightarrow Y$
to a $\mathbf{K}$-space $Y$, there exists a unique $y_A\in Y$ such that $\overline{f(A)}=\overline{\{y_A\}}$.
Denote by $\mathbf{K}(X)$ the set of all closed $\mathbf{K}$-determined sets of $X$. The space $X$ is said to be a $\mathbf{K}$-\emph{determined} space, if $\ir_c(X)=\mathbf{K}(X)$ or, equivalently, all irreducible closed sets of $X$ are $\mathbf{K}$-sets.
\end{definition}

Clearly, a subset $A$ of a space $X$ is  $\mathbf{K}$-determined  if{}f $\overline{A}$ is a $\mathbf{K}$-determined set. For simplicity, let $d(X)=\mathbf{Top}_d(X)$ and $\mathsf{WD}(X)=\mathsf{Top}_w(X)$. The sets in $\mathsf{WD}(X)$ are called $\mathsf{WD}$ sets. The space $X$ is called a \emph{well-filtered determined} space, shortly a $\mathsf{WD}$ space, if all irreducible closed subsets of $X$ are $\mathsf{WD}$ sets, that is, $\ir_c(X)=\mathsf{WD} (X)$ (see \cite{xu20, xu-shen-xi-zhao1}).

\begin{lemma}\label{sobd=irr} (\cite{xu20}) Let $\mathbf{K}$ be a full category of $\mathbf{Top}_0$ containing $\mathbf{Sob}$ and $X$ a $T_0$ space. Then $\mathcal S_c(X)\subseteq\mathbf{KD}(X)\subseteq\mathbf{Sob}(X)=\ir_c(X)$.
\end{lemma}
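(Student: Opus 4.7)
The plan is to verify the three set-theoretic relations in turn. For $\mathcal S_c(X)\subseteq\mathbf{KD}(X)$, I would take $A=\overline{\{x\}}$ and, for any continuous $f:X\longrightarrow Y$ into a $\mathbf K$-space, propose $y_A:=f(x)$. Continuity gives $f(A)\subseteq\overline{\{f(x)\}}$, while $f(x)\in f(A)$ yields the reverse containment after closure, so $\overline{f(A)}=\overline{\{f(x)\}}$; uniqueness of $y_A$ is the $T_0$ property of $Y$. For $\mathbf{KD}(X)\subseteq\mathbf{Sob}(X)$, the containment is essentially by hypothesis: condition $(\mathrm{K}_2)$ forces every sober target to be a $\mathbf K$-space, so the $\mathbf{KD}$-condition immediately specializes to the $\mathbf{Sob}$-condition.

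For the equality $\mathbf{Sob}(X)=\ir_c(X)$, the direction $\ir_c(X)\subseteq\mathbf{Sob}(X)$ is quick: if $A$ is irreducible closed and $f:X\longrightarrow Y$ is continuous with $Y$ sober, the continuous image $f(A)$ is irreducible, so $\overline{f(A)}$ is irreducible closed in $Y$, and sobriety of $Y$ produces a unique $y_A\in Y$ with $\overline{f(A)}=\overline{\{y_A\}}$.

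The substantial direction is $\mathbf{Sob}(X)\subseteq\ir_c(X)$, which I would handle through the canonical sobrification $\eta_X:X\longrightarrow X^s=P_H(\ir_c(X))$ from Remark~\ref{eta continuous}(2). Since $X^s$ is sober and $A$ is $\mathbf{Sob}$-determined, there is a unique $y_A\in X^s$ (itself an irreducible closed subset of $X$) with $\overline{\eta_X(A)}=\overline{\{y_A\}}$ inside $X^s$. The key computation is to evaluate both closures concretely: using that open sets of $X^s$ are of the form $\Diamond U$ with $U\in\mathcal O(X)$, together with the identity $\overline{\{x\}}\cap U\neq\emptyset\Longleftrightarrow x\in U$, one shows $\overline{\eta_X(A)}=\{B\in\ir_c(X):B\subseteq A\}$; since the specialization order on $X^s$ is set inclusion by Remark~\ref{eta continuous}(1), one also has $\overline{\{y_A\}}=\{B\in\ir_c(X):B\subseteq y_A\}$. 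Equating these two families and testing on $B=y_A$, respectively on $B=\overline{\{x\}}$ for each $x\in A$, forces $y_A\subseteq A$ and $A\subseteq y_A$, so $A=y_A\in\ir_c(X)$.

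The main obstacle I foresee is this last concrete description of the two closures inside $X^s$. The other steps reduce to short unfoldings of definitions or standard facts about irreducible sets and sober spaces, but the identification $\overline{\eta_X(A)}=\{B\in\ir_c(X):B\subseteq A\}$ requires a careful argument using the subbasic form of the topology on $X^s$, after which the conclusion is forced.
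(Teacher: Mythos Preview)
The paper does not supply its own proof of this lemma; it is quoted from \cite{xu20} as a known fact, so there is nothing in the present paper to compare against. Your argument is correct and is the natural one: the inclusions $\mathcal S_c(X)\subseteq\mathbf{K}(X)\subseteq\mathbf{Sob}(X)$ and $\ir_c(X)\subseteq\mathbf{Sob}(X)$ unwind directly from the definitions, and for the only substantive direction $\mathbf{Sob}(X)\subseteq\ir_c(X)$ you test the $\mathbf{Sob}$-determined set $A$ against the sobrification map $\eta_X:X\to X^s$ and identify $A$ with the resulting point $y_A\in\ir_c(X)$. The computation of $\overline{\eta_X(A)}$ is straightforward once one uses (as the paper notes just before Remark~\ref{eta continuous}) that $\{\Diamond U:U\in\mathcal O(X)\}$ is already the full topology on $X^s$, so closed sets of $X^s$ are precisely the families $\{B\in\ir_c(X):B\subseteq C\}$ with $C\in\Gamma(X)$. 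One small point worth stating explicitly in your write-up: the identity $\overline{\eta_X(A)}=\{B\in\ir_c(X):B\subseteq A\}$ (rather than $B\subseteq\overline{A}$) uses that $A$ is itself closed, which is part of the hypothesis since by Definition~\ref{KD subset} the set $\mathbf{Sob}(X)$ consists of \emph{closed} $\mathbf{Sob}$-determined subsets.
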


\begin{proposition}\label{DRWIsetrelation} (\cite{xu-shen-xi-zhao1})
	For any $T_0$ space  $X$, $\mathcal S_c(X)\subseteq \mathcal{D}_c(X)\subseteq \mathsf{RD}(X)\subseteq\mathsf{WD}(X)\subseteq\ir_c(X)$.
\end{proposition}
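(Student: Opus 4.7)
The plan is to establish the four inclusions separately. The outermost two are immediate: every singleton $\{x\}$ is trivially directed, giving $\mathcal{S}_c(X)\subseteq \mathcal{D}_c(X)$; and $\wdd(X)\subseteq \ir_c(X)$ is the specialisation of Lemma \ref{sobd=irr} to $\mathbf{K}=\mathbf{Top}_w$. For $\mathcal{D}_c(X)\subseteq \kf(X)$, given $A=\overline{D}$ with $D$ directed, I would take as witness $\mathcal{K}=\{\ua d : d\in D\}$. Each $\ua d$ is supercompact (since open sets are upper sets) and hence lies in $\mk(X)$, and the directedness of $D$ makes $\mathcal{K}$ filtered in the Smyth order. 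The property $\overline{D}\in M(\mathcal{K})$ is clear since $d\in \ua d\cap \overline{D}$; for minimality in $m(\mathcal{K})$, if $A'\subsetneq \overline{D}$ is closed, density of $D$ in $\overline{D}$ produces some $d_0\in D\setminus A'$, and then $\ua d_0\cap A'=\emptyset$ because $A'$ is a lower set.

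The substantive step is $\kf(X)\subseteq \wdd(X)$. Let $A\in \kf(X)$ be witnessed by a filtered family $\mathcal{K}\subseteq \mk(X)$ with $A\in m(\mathcal{K})$, and let $f\colon X\longrightarrow Y$ be continuous into a well-filtered space $Y$; I must produce a unique $y_A\in Y$ with $\overline{f(A)}=\overline{\{y_A\}}$. The crucial move is to pass to the auxiliary family $\mathcal{K}_0=\{\ua f(K\cap A) : K\in \mathcal{K}\}\subseteq \mk(Y)$. Each $K\cap A$ is nonempty and closed in the compact set $K$, hence compact, so $f(K\cap A)$ is compact in $Y$ and $\ua f(K\cap A)\in \mk(Y)$; filteredness of $\mathcal{K}_0$ is inherited from $\mathcal{K}$. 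Since $f(K\cap A)\subseteq \overline{f(A)}$, the closed set $\overline{f(A)}$ meets every member of $\mathcal{K}_0$, so the well-filteredness of $Y$ applied to $\mathcal{K}_0$ and the open set $Y\setminus \overline{f(A)}$ forces $\overline{f(A)}\cap \bigcap \mathcal{K}_0\neq\emptyset$; pick $y$ in this intersection. Then $\overline{\{y\}}\subseteq \overline{f(A)}$ is clear. For the reverse inclusion, set $B=A\cap f^{-1}(\overline{\{y\}})$: for each $K\in \mathcal{K}$, the membership $y\in \ua f(K\cap A)$ supplies some $k\in K\cap A$ with $f(k)\leq y$, whence $k\in B\cap K$, so $B\in M(\mathcal{K})$ with $B\subseteq A$. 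Minimality of $A$ in $m(\mathcal{K})$ forces $B=A$, i.e., $f(A)\subseteq \overline{\{y\}}$ and therefore $\overline{f(A)}\subseteq \overline{\{y\}}$. Uniqueness of $y$ is immediate from $Y$ being $T_0$.

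The hard part is the choice of auxiliary family in the third inclusion. The naive candidate $\{\ua f(K) : K\in \mathcal{K}\}$ would only produce $y\geq f(k)$ for some $k\in K$ not necessarily in $A$, and the induced set $B$ would satisfy $K\cap f^{-1}(\overline{\{y\}})\neq\emptyset$ while $B\cap K$ might be empty, blocking any use of the minimality of $A$ in $m(\mathcal{K})$. Refining $K$ to $K\cap A$ inside $f$ is precisely what forces the witnessing $k$ into $A$ and unlocks the minimality argument. The remaining input is the standard contrapositive of well-filteredness: a filtered family of compact saturated sets whose members all meet a given closed set must have its intersection meet that closed set as well.
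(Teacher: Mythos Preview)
Your argument is correct. Note, however, that the present paper does not supply its own proof of this proposition; it is merely quoted from \cite{xu-shen-xi-zhao1}, so there is nothing in this paper to compare against directly. That said, your proof is the standard one and matches the argument in the cited source: the inclusion $\mathcal D_c(X)\subseteq \kf(X)$ via the filtered family $\{\ua d : d\in D\}$, and the inclusion $\kf(X)\subseteq \wdd(X)$ via pushing forward the intersected family $\{\ua f(K\cap A):K\in\mathcal K\}$ and then invoking minimality of $A$ to pull back $\overline{\{y\}}$, are exactly the moves used there. Your closing remark about why $\{\ua f(K):K\in\mathcal K\}$ would not suffice is accurate and is the key observation behind the refinement to $K\cap A$.
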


\begin{corollary}\label{SDRWspacerelation} (\cite{xu-shen-xi-zhao1})
	Sober $\Rightarrow$ $\mathsf{DC}$ $\Rightarrow$ $\mathsf{RD}$ $\Rightarrow$ $\mathsf{WD}$.
\end{corollary}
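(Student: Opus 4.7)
The statement is a routine consequence of Proposition \ref{DRWIsetrelation}, so the plan is simply to unpack each of the four properties as an equality of the form $\ir_c(X)=\mathcal{F}(X)$ for one of the four families appearing in the chain
\[
\mathcal S_c(X)\subseteq \mathcal{D}_c(X)\subseteq \mathsf{RD}(X)\subseteq\mathsf{WD}(X)\subseteq\ir_c(X),
\]
and then apply a one-line squeeze to push each equality one step to the right along the chain.

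The only preliminary point worth recording is that, for a $T_0$ space $X$, sobriety is equivalent to $\ir_c(X)=\mathcal{S}_c(X)$. Indeed, the closure of any singleton is irreducible, so $\mathcal{S}_c(X)\subseteq \ir_c(X)$ automatically; conversely, the defining property of sobriety (together with the $T_0$ axiom, which delivers uniqueness of the point representing an irreducible closed set) yields $\ir_c(X)\subseteq \mathcal{S}_c(X)$. The analogous characterizations for $\mathsf{DC}$, $\mathsf{RD}$, and $\mathsf{WD}$ spaces are built into their definitions verbatim.

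For Sober $\Rightarrow \mathsf{DC}$, I would then take $X$ sober, so $\ir_c(X)=\mathcal{S}_c(X)$, and combine this with the first two inclusions from Proposition \ref{DRWIsetrelation} to obtain
\[
\ir_c(X)=\mathcal S_c(X)\subseteq \mathcal D_c(X)\subseteq \ir_c(X),
\]
which forces $\ir_c(X)=\mathcal D_c(X)$, that is, $X$ is a $\mathsf{DC}$ space. The implications $\mathsf{DC}\Rightarrow \mathsf{RD}$ and $\mathsf{RD}\Rightarrow \mathsf{WD}$ are handled by exactly the same squeeze, using the remaining inclusions $\mathcal{D}_c(X)\subseteq \mathsf{RD}(X)\subseteq \ir_c(X)$ and $\mathsf{RD}(X)\subseteq \mathsf{WD}(X)\subseteq \ir_c(X)$ respectively. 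There is no genuine obstacle here: all of the mathematical content has already been absorbed into Proposition \ref{DRWIsetrelation}, whose nontrivial inclusions $\mathsf{RD}(X)\subseteq \mathsf{WD}(X)\subseteq \ir_c(X)$ ultimately rest on Topological Rudin's Lemma (Lemma \ref{t Rudin}).
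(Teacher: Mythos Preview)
Your proposal is correct and matches the paper's approach: the paper gives no explicit proof, simply listing the corollary immediately after Proposition~\ref{DRWIsetrelation}, which signals exactly the squeeze argument you wrote out. Your reformulation of sobriety as $\ir_c(X)=\mathcal S_c(X)$ and your use of the definitions of $\mathsf{DC}$, $\mathsf{RD}$, and $\mathsf{WD}$ spaces to carry the equality along the chain is precisely what is intended.
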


By Lemma \ref{sobd=irr}, sober spaces are $\mathbf{K}$-determined. By Proposition \ref{DRWIsetrelation} we know that the class of Rudin spaces lie between the class of $\mathsf{WD}$ spaces and that of $\dc$ spaces. Also the class of $\dc$ spaces lie between the class of Rudin spaces and that of sober spaces.

In \cite[Example 4.15]{Liu-Li-Wu-2020}, Liu, Li and Wu constructed a $T_0$ space $X$ in which some well-filtered determined sets are not Rudin sets, and hence gave a negative answer to a queston posed by Xu and Zhao in \cite{xuzhao}: Does $\mathsf{RD}(X)=\mathsf{WD}(X)$ hold for every $T_0$ space $X$? It is not difficult to check that the space $X$ is a $\mathsf{WD}$ space but not a Rudin space. Therefore, Example 4.15 in \cite{Liu-Li-Wu-2020} also gave a negative answer to another related question raised by Xu, Shen, Xi and Zhao in \cite{xu-shen-xi-zhao1}: Is every well-filtered determined space a Rudin space?

Using Rudin sets and $\wdd$ sets, we can give some new characterizations of well-filtered spaces and sober spaces.

\begin{theorem}\label{WFwdc} (\cite{xu-shen-xi-zhao1})
	For a $T_0$ space $X$, the following conditions are equivalent:
	\begin{enumerate}[\rm (1)]
		\item $X$ is well-filtered.
		\item $\mathsf{RD}(X)=\mathcal S_c(X)$.
        \item $\wdd (X)=\mathcal S_c(X)$, that is, for each $A\in\wdd(X)$, there exists a unique $x\in X$ such that $A=\overline{\{x\}}$.
	\end{enumerate}
\end{theorem}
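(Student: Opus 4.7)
The plan is to prove the equivalence via the cycle $(1)\Rightarrow(3)\Rightarrow(2)\Rightarrow(1)$, relying on Proposition~\ref{DRWIsetrelation} (which already gives $\mathcal S_c(X)\subseteq\mathsf{RD}(X)\subseteq\mathsf{WD}(X)\subseteq\ir_c(X)$) and Lemma~\ref{t Rudin} (topological Rudin's Lemma) as the main tools.

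For $(1)\Rightarrow(3)$, I would use the definition of $\mathsf{WD}$ sets directly. Given $A\in\mathsf{WD}(X)$, apply Definition~\ref{KD subset} with the identity map $\mathrm{id}_X\colon X\to X$, which is continuous into the well-filtered space $X$ itself. This produces a unique $x\in X$ with $\overline{\mathrm{id}_X(A)}=A=\overline{\{x\}}$, so $A\in\mathcal S_c(X)$. Combined with $\mathcal S_c(X)\subseteq\mathsf{WD}(X)$ from Proposition~\ref{DRWIsetrelation}, this gives the claimed equality. The step $(3)\Rightarrow(2)$ is immediate from the inclusion chain $\mathcal S_c(X)\subseteq\mathsf{RD}(X)\subseteq\mathsf{WD}(X)=\mathcal S_c(X)$.

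The substantive direction is $(2)\Rightarrow(1)$. Suppose $\mathsf{RD}(X)=\mathcal S_c(X)$ and assume toward contradiction a filtered family $\mathcal K\subseteq\mk(X)$ and $U\in\mathcal O(X)$ with $\bigcap\mathcal K\subseteq U$ but $K\not\subseteq U$ for every $K\in\mathcal K$. Then each $K$ meets the closed set $X\setminus U$, so $X\setminus U\in M(\mathcal K)$. Since $\mathcal K$ is directed in the Smyth order, it is irreducible in $P_S(X)$, hence Lemma~\ref{t Rudin} supplies an irreducible closed set $A\subseteq X\setminus U$ which is minimal among closed sets meeting every member of $\mathcal K$; that is, $A\in m(\mathcal K)$. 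By Definition~\ref{Rudin space WD space}, $A\in\mathsf{RD}(X)$, so by hypothesis $A=\overline{\{x\}}=\da x$ for some $x\in X$. For each $K\in\mathcal K$, pick $y\in A\cap K$; then $y\le x$, and since $K$ is saturated (an upper set), $x\in K$. Hence $x\in\bigcap\mathcal K\subseteq U$, contradicting $x\in A\subseteq X\setminus U$.

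The only delicate step is $(2)\Rightarrow(1)$, and the subtle point there is applying the topological Rudin's Lemma correctly: one must notice that a filtered family in $\mk(X)$ is directed in the Smyth order and therefore irreducible in $P_S(X)$, so Lemma~\ref{t Rudin} applies and extracts an \emph{irreducible} minimal closed witness, which by construction satisfies the Rudin property. The rest is a straightforward exploitation of saturatedness of the $K\in\mathcal K$ together with $A=\da x$.
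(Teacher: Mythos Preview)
Your argument is correct. Note that this paper is a survey and does not include a proof of Theorem~\ref{WFwdc}; it simply cites \cite{xu-shen-xi-zhao1}. Your cycle $(1)\Rightarrow(3)\Rightarrow(2)\Rightarrow(1)$ is exactly the standard route: the implication $(1)\Rightarrow(3)$ is the trivial ``plug in the identity'' trick, $(3)\Rightarrow(2)$ follows from Proposition~\ref{DRWIsetrelation}, and $(2)\Rightarrow(1)$ is the expected application of the topological Rudin Lemma (Lemma~\ref{t Rudin}) to produce a Rudin set inside $X\setminus U$ and then exploit saturatedness. There is nothing to add.
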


\begin{theorem}\label{soberequiv}  (\cite{xu-shen-xi-zhao1}) For a $T_0$ space $X$, the following conditions are equivalent:
	\begin{enumerate}[\rm (1)]
		\item $X$ is sober.
		\item $X$ is a $\mathsf{DC}$ $d$-space.
        \item $X$ is a well-filtered $\mathsf{DC}$ space.
		\item $X$ is a well-filtered Rudin space.
		\item $X$ is a well-filtered $\mathsf{WD}$ space.
	\end{enumerate}
\end{theorem}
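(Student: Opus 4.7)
The plan is to establish the five equivalences as two small cycles meeting at (1), using the inclusion chain in Proposition \ref{DRWIsetrelation} together with the Rudin/$\wdd$ characterization of well-filteredness in Theorem \ref{WFwdc}. Concretely, I would verify (1)$\Rightarrow$(3)$\Rightarrow$(2)$\Rightarrow$(1) to handle the $\mathsf{DC}$-branch, and then (3)$\Rightarrow$(4)$\Rightarrow$(5)$\Rightarrow$(1) to pull in the Rudin and $\wdd$ variants.

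First I would dispose of the easy implications. For (1)$\Rightarrow$(3): sober spaces are well-filtered by Hofmann--Mislove, and they are trivially $\mathsf{DC}$ because $\ir_c(X)=\mathcal S_c(X)\subseteq\mathcal D_c(X)$ (singletons are directed). The implication (3)$\Rightarrow$(2) is immediate because every well-filtered space is a $d$-space, as recorded at the start of Section~3. For the Rudin side, (3)$\Rightarrow$(4) and (4)$\Rightarrow$(5) are both instances of Corollary~\ref{SDRWspacerelation} / Proposition~\ref{DRWIsetrelation}: any $\mathsf{DC}$ space is Rudin, and any Rudin space is $\wdd$, so well-filteredness is preserved while the determining class widens.

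The two substantive implications are (2)$\Rightarrow$(1) and (5)$\Rightarrow$(1). For (5)$\Rightarrow$(1), I would just combine Theorem~\ref{WFwdc} with the $\wdd$-assumption: well-filteredness gives $\wdd(X)=\mathcal S_c(X)$, while $X$ being a $\wdd$ space gives $\ir_c(X)=\wdd(X)$; together $\ir_c(X)=\mathcal S_c(X)$, which is exactly sobriety.

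The implication (2)$\Rightarrow$(1) is where the real work happens and is the step I expect to be the main obstacle in presentation. Given $A\in\ir_c(X)$, the $\mathsf{DC}$ hypothesis produces a directed set $D\subseteq X$ with $A=\overline{D}$. Because $X$ is a $d$-space, $d=\bigvee D$ exists in the specialization order and, since $\mathcal O(X)\subseteq\sigma(X)$, any open set meeting $\{d\}$ must meet $D$; hence $d\in\overline{D}=A$. Combined with $D\subseteq\da d$, this gives $A=\overline{D}\subseteq\overline{\da d}=\overline{\{d\}}\subseteq\overline{A}=A$, so $A=\overline{\{d\}}$. Uniqueness of such $d$ follows from $T_0$-ness. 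Thus $X$ is sober, closing both cycles and completing the equivalences.
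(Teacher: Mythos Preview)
Your proof is correct. The paper itself does not include a proof of this theorem---it is simply quoted from \cite{xu-shen-xi-zhao1}---so there is nothing to compare against directly; that said, your argument is exactly the one the surrounding material in the paper is set up to support: the forward chain (1)$\Rightarrow$(3)$\Rightarrow$(4)$\Rightarrow$(5) comes straight from Corollary~\ref{SDRWspacerelation} and the fact that sober implies well-filtered, the step (5)$\Rightarrow$(1) is an immediate combination of the $\wdd$-space hypothesis with Theorem~\ref{WFwdc}, and (2)$\Rightarrow$(1) is the standard $d$-space computation that $\overline{D}=\overline{\{\bigvee D\}}$ for any directed $D$. One cosmetic remark: in (2)$\Rightarrow$(1) you could shorten the argument by noting that $d\in A$ (which you already established) together with $A$ irreducible closed and $\overline{\{d\}}\supseteq D$ immediately gives $\overline{\{d\}}=A$, but what you wrote is fine.
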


\begin{lemma}\label{LHCdirected} (\cite{E_20182})
	Let $X$ be a locally hypercompact $T_0$ space and $A\in\ir(X)$. Then there exists a directed subset $D\subseteq\da A$ such that $\overline{A}=\overline{D}$.
\end{lemma}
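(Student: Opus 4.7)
The plan is to apply Rudin's Lemma (Corollary~\ref{rudin}) to a filtered family of compact upper sets chosen from the local hypercompactness basis at the points of $A$. Specifically, view $X$ as a poset under its specialization order and consider
$$\mathcal F = \{\ua F \in \mathbf{Fin}~X : A \cap \ii\,\ua F \neq \emptyset\}.$$
Every $\ua F \in \mathcal F$ meets $\da A$ (any witness $a \in A \cap \ua F$ lies in $\da A$), so $\mathcal F \subseteq \Diamond \da A$. Thus once $\mathcal F$ is shown to be a filtered family, Rudin's Lemma produces a directed subset $D \subseteq \da A$ with $\mathcal F \subseteq \Diamond \da D$, and the remaining work is to verify that this $D$ satisfies $\overline{A} = \overline{D}$.

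To check filteredness, take $\ua F_1, \ua F_2 \in \mathcal F$. The opens $\ii\,\ua F_1$ and $\ii\,\ua F_2$ both meet $A$, and the irreducibility of $A$ upgrades this to $A \cap \ii\,\ua F_1 \cap \ii\,\ua F_2 \neq \emptyset$. Picking $x$ in this intersection, local hypercompactness applied to the open neighborhood $\ii\,\ua F_1 \cap \ii\,\ua F_2$ of $x$ gives some $\ua F_3 \in \mathbf{Fin}~X$ with $x \in \ii\,\ua F_3 \subseteq \ua F_3 \subseteq \ii\,\ua F_1 \cap \ii\,\ua F_2 \subseteq \ua F_1 \cap \ua F_2$. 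Then $\ua F_3 \in \mathcal F$ and refines both $\ua F_1$ and $\ua F_2$ in reverse inclusion.

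Having obtained the directed $D \subseteq \da A$ from Rudin's Lemma, the inclusion $\overline{D} \subseteq \overline{A}$ is immediate because $D \subseteq \da A \subseteq \overline{A}$ (closed sets are lower in the specialization order). For the reverse inclusion, let $a \in A$ and let $U$ be any open neighborhood of $a$. Local hypercompactness yields a finite $F$ with $a \in \ii\,\ua F \subseteq \ua F \subseteq U$, so $\ua F \in \mathcal F$ and hence $\ua F \cap \da D \neq \emptyset$. Choose $y \in \ua F$ with $y \leq d$ for some $d \in D$; since $y \in U$ and $U$ is open, hence an upper set in the specialization order, we get $d \in U$. Thus $D$ meets every open neighborhood of every $a \in A$, giving $A \subseteq \overline{D}$ and hence $\overline{A} \subseteq \overline{D}$.

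The main obstacle is really the filteredness of $\mathcal F$: this is the only step where the two hypotheses genuinely interact --- irreducibility of $A$ is needed to intersect the two open witnesses inside $A$, while local hypercompactness is needed to realize the resulting point as belonging to some $\ii\,\ua F_3$ inside the intersection. The rest is a clean application of Rudin's Lemma together with the standard fact that open sets are upper.
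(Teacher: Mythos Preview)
The paper does not supply its own proof of this lemma; it is quoted from \cite{E_20182} and used as a black box. So there is nothing in the present paper to compare your argument against.

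That said, your argument is correct and is essentially the standard one. A couple of small points worth making explicit: the family $\mathcal F$ is nonempty because for any $a\in A$ one may apply local hypercompactness with $U=X$; and the version of Rudin's Lemma you invoke (Corollary~\ref{rudin}) is stated for a filtered family in $\mathbf{Fin}\,P$, which is exactly how your $\mathcal F$ is built, so the application is legitimate. The characterization of irreducibility you use (two opens meeting $A$ implies their intersection meets $A$) is the standard reformulation and needs no further comment. Overall the proof is sound and would be an appropriate justification to insert where the paper merely cites the result.
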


Corollary \ref{SDRWspacerelation} and Lemma \ref{LHCdirected} together imply the following.

\begin{corollary}\label{erne1} (\cite{xu-shen-xi-zhao1})
	If $X$ is a locally hypercompact $T_0$ space, then it is a $\mathsf{DC}$ space. Therefore, it is a Rudin space and a $\mathsf{WD}$ space.
\end{corollary}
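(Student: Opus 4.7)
The plan is to reduce the statement to showing that $\ir_c(X)=\mathcal{D}_c(X)$; once $X$ is known to be a $\mathsf{DC}$ space, the fact that it is also a Rudin space and a $\mathsf{WD}$ space follows immediately from Corollary \ref{SDRWspacerelation}, which records the chain $\mathsf{DC}\Rightarrow\mathsf{RD}\Rightarrow\mathsf{WD}$.

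For the inclusion $\mathcal{D}_c(X)\subseteq\ir_c(X)$, I would appeal to the general fact noted in the preliminaries that every subset of $X$ directed under $\leq_X$ is irreducible. Hence the closure of a directed set is automatically an irreducible closed set, and this direction uses nothing about local hypercompactness.

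For the nontrivial inclusion $\ir_c(X)\subseteq\mathcal{D}_c(X)$, the entire weight is carried by Lemma \ref{LHCdirected}. Given an arbitrary $A\in\ir_c(X)$, I would apply the lemma to $A\in\ir(X)$ to produce a directed subset $D\subseteq\downarrow A$ with $\overline{A}=\overline{D}$; because $A$ is already closed, this forces $A=\overline{D}$, exhibiting $A$ as the closure of a directed set and placing it in $\mathcal{D}_c(X)$. Combining the two inclusions gives $\ir_c(X)=\mathcal{D}_c(X)$, so $X$ is a $\mathsf{DC}$ space, and Corollary \ref{SDRWspacerelation} then yields both the Rudin and the $\mathsf{WD}$ conclusions. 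I do not anticipate any real obstacle: the argument is essentially an assembly of Lemma \ref{LHCdirected} with the already-established implications, and local hypercompactness enters solely through that lemma.
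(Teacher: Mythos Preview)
Your proposal is correct and matches the paper's approach exactly: the paper simply states that Corollary~\ref{SDRWspacerelation} and Lemma~\ref{LHCdirected} together imply the result, and you have spelled out precisely that assembly. There is nothing to add.
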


\begin{theorem}\label{LCrudin} (\cite{xu-shen-xi-zhao1}) Every locally compact $T_0$ space is a Rudin space.
\end{theorem}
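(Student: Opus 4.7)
The goal is, for each irreducible closed $A\subseteq X$, to exhibit a filtered family $\mathcal{K}\subseteq \mk(X)$ such that $A\in m(\mathcal{K})$, i.e.\ $A$ meets every member of $\mathcal{K}$ but no proper closed subset of $A$ does. The natural candidate, exploiting local compactness, is
\[
\mathcal{K}:=\{K\in\mk(X): \ii\, K\cap A\neq\emptyset\}.
\]

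First I would check that $\mathcal{K}$ is \emph{filtered} under reverse inclusion. Given $K_1,K_2\in\mathcal{K}$, both $\ii K_1$ and $\ii K_2$ meet $A$. Because $A$ is irreducible (so the family of open sets meeting $A$ is a filter in $\mathcal O(X)$), the open set $\ii K_1\cap \ii K_2$ also meets $A$; pick any $x$ in this intersection. Local compactness of $X$ applied to $x$ and the neighborhood $\ii K_1\cap \ii K_2$ produces $K_3\in\mk(X)$ with $x\in \ii K_3\subseteq K_3\subseteq \ii K_1\cap \ii K_2\subseteq K_1\cap K_2$. Since $x\in A\cap\ii K_3$, $K_3\in\mathcal{K}$, which verifies filteredness.

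Next, by construction $A\cap K\supseteq A\cap \ii K\neq\emptyset$ for every $K\in\mathcal{K}$, so $A\in M(\mathcal{K})$. For minimality, suppose $B\in\Gamma(X)$ with $B\subsetneq A$ and $B\in M(\mathcal{K})$. Then $V:=X\setminus B$ is open and, since $A\not\subseteq B$, meets $A$; choose $x\in V\cap A$. Local compactness gives $K\in\mk(X)$ with $x\in\ii K\subseteq K\subseteq V$, hence $K\in\mathcal{K}$, but $K\cap B\subseteq V\cap B=\emptyset$, contradicting $B\in M(\mathcal{K})$. Thus $A\in m(\mathcal{K})$, so $A\in \kf(X)$, and $X$ is a Rudin space.

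The only real subtlety is the filteredness step, which requires both ingredients of the hypothesis: irreducibility of $A$ to guarantee that $\ii K_1\cap \ii K_2$ still meets $A$, and local compactness to shrink to a compact saturated set strictly inside $K_1\cap K_2$. Once the family $\mathcal{K}$ is set up, the minimality argument is immediate from the existence of arbitrarily small compact saturated neighborhoods. No appeal to the topological Rudin Lemma itself is needed; the family is exhibited directly, fitting the template of Proposition \ref{DRWIsetrelation} and Corollary \ref{SDRWspacerelation}.
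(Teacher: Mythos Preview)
Your argument is correct. One small omission: you should note that $\mathcal{K}$ is nonempty (pick any $x\in A$ and apply local compactness with $U=X$), but this is implicit in your setup. The present paper is a survey and does not supply its own proof of Theorem~\ref{LCrudin}; it merely cites \cite{xu-shen-xi-zhao1}. Your direct construction of the witnessing filtered family---the compact saturated sets whose interiors meet $A$, with irreducibility of $A$ and local compactness driving both filteredness and minimality---is precisely the standard argument used there, so there is nothing further to compare.
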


Similarly, we have the following result, which positively answers \cite[Problem 4.2]{xuzhao}.

\begin{theorem}\label{Core compt is WD} (\cite{xu-shen-xi-zhao1}) Every core compact $T_0$ space is well-filtered determined.
\end{theorem}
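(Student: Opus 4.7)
Let $A\in\ir_c(X)$ and $f:X\longrightarrow Y$ be continuous with $Y$ well-filtered. Since $f(A)$ is irreducible, $B:=\overline{f(A)}$ is irreducible closed in $Y$; the task is to exhibit a (necessarily unique) $y\in Y$ with $B=\overline{\{y\}}$.

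My plan is to model the argument on Theorem~\ref{LCrudin}, using the continuity of the frame $\mathcal O(X)$ in place of local compactness of $X$. For each $V\in \mathcal O(Y)$ with $V\cap B\neq\emptyset$ (equivalently $A\cap f^{-1}(V)\neq\emptyset$), core compactness expresses $f^{-1}(V)=\bigcup\{W\in \mathcal O(X):W\ll f^{-1}(V)\}$ as a directed union. For each such $W$ meeting $A$, I would associate a compact saturated set $K_W\in\mk(Y)$, for instance
\[
K_W \ :=\ \bigcap\{V'\in \mathcal O(Y):W\subseteq f^{-1}(V')\}.
\]
Closing the collection of all such $K_W$ under finite intersection yields a filtered family $\mathcal K\subseteq\mk(Y)$ under the Smyth order in which every member meets $B$.

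Applying well-filteredness of $Y$ to $\mathcal K$ then yields the generic point. First, since each $K\in\mathcal K$ meets $B$, no $K$ is contained in an open disjoint from $B$; well-filteredness therefore forces $\bigcap\mathcal K$ to lie inside $B$ once the construction is checked to give the right approximation. Dually, any $V\in \mathcal O(Y)$ missing $\bigcap\mathcal K$ would, by well-filteredness, contain some $K_W\in\mathcal K$, which is ruled out by $K_W\cap B\neq\emptyset$ provided $V\cap B\neq\emptyset$. Hence $\bigcap\mathcal K$ meets every $V\in \mathcal O(Y)$ meeting $B$, and by Lemma~\ref{COMPminimalset} this compact saturated set has minimal elements; one such minimal element, via irreducibility of $B$ and a further filtration argument, is the generic point $y$, giving $B=\overline{\{y\}}$.

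The main obstacle is the compactness of $K_W$ in $Y$. In Theorem~\ref{LCrudin}, local compactness places $W$ inside the interior of a compact saturated subset of $X$, whose image under $f$ is automatically compact in $Y$. For merely core compact $X$, no such compact set sits inside $X$, so the compactness of $K_W$ must be secured indirectly, most likely by passing to the sobrification $Y^s$: the Scott-open filter generated by $\{f^{-1}(V'):W\subseteq f^{-1}(V')\}$ corresponds, via the Hofmann--Mislove theorem (Theorem~\ref{Hofmann-Mislove theorem}) applied to the sober space $Y^s$, to a compact saturated subset of $Y^s$, and then the well-filteredness of $Y$ is used to descend this set back into $Y$, ruling out ``phantom'' points of $Y^s\setminus\eta_Y(Y)$. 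Making this descent rigorous is the crux of the proof.
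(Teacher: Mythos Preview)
This survey does not include a proof; the result is quoted from \cite{xu-shen-xi-zhao1}, so I will simply assess your sketch. The gap you flag is real and not a formality. The family $\{V'\in\mathcal O(Y):W\subseteq f^{-1}(V')\}$ is a filter but need not be Scott-open: the hypothesis $W\ll f^{-1}(V)$ constrains directed covers of $f^{-1}(V)$, not of $W$. Replacing $\subseteq$ by $\ll$ gives the Scott-open set $(f^{-1})^{-1}(\{U\in\mathcal O(X):W\ll U\})$ (since $f^{-1}$ is Scott-continuous), but that is generally \emph{not} a filter, because $W\ll U_1$ and $W\ll U_2$ do not force $W\ll U_1\cap U_2$. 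Your parenthetical ``Scott-open filter generated by $\{f^{-1}(V'):W\subseteq f^{-1}(V')\}$'' also sits in $\mathcal O(X)$, which is the wrong lattice for Hofmann--Mislove on $Y^s$. Even granting some $K\in\mk(Y^s)$ produced this way, the descent to $Y$ is where the argument must actually bite: $\eta_Y^{-1}(K)$ need not lie in $\mk(Y)$, so you still have no filtered family in $\mk(Y)$ to witness that $\overline{f(A)}$ is a Rudin set and then invoke Theorem~\ref{WFwdc}. Your final sentence acknowledges precisely this, but gives no mechanism.

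A route that sidesteps all of these obstructions uses the reflection machinery already assembled in the paper. For any $\mathcal G$ with $\mathcal S_c(X)\subseteq\mathcal G\subseteq\ir_c(X)$, irreducibility gives $\Diamond U\cap\Diamond V=\Diamond(U\cap V)$ on $\mathcal G$, so $U\mapsto\Diamond U$ is a frame isomorphism $\mathcal O(X)\cong\mathcal O(P_H(\mathcal G))$; in particular $\mathcal O(X^w)\cong\mathcal O(X)$, so $X^w$ is core compact whenever $X$ is. Since $X^w$ is well-filtered, Theorem~\ref{core compact WF is sober} (for which \cite{Lawson-Xi} supplies an independent proof, so there is no circularity) makes $X^w$ sober, whence $X^w\cong X^s$ and therefore $\mathsf{WD}(X)=\ir_c(X)$, which is exactly the claim. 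This also clarifies why Question~\ref{core compact is Rudin space question} can remain open alongside Theorem~\ref{Core compt is WD}: nothing in this argument shows that any $A\in\ir_c(X)$ is a Rudin set of $X$ itself.
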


Figure 2 shows some relationships among some types of spaces.

\begin{figure}[ht]
	\centering
	\includegraphics[height=2.2in,width=4.0in]{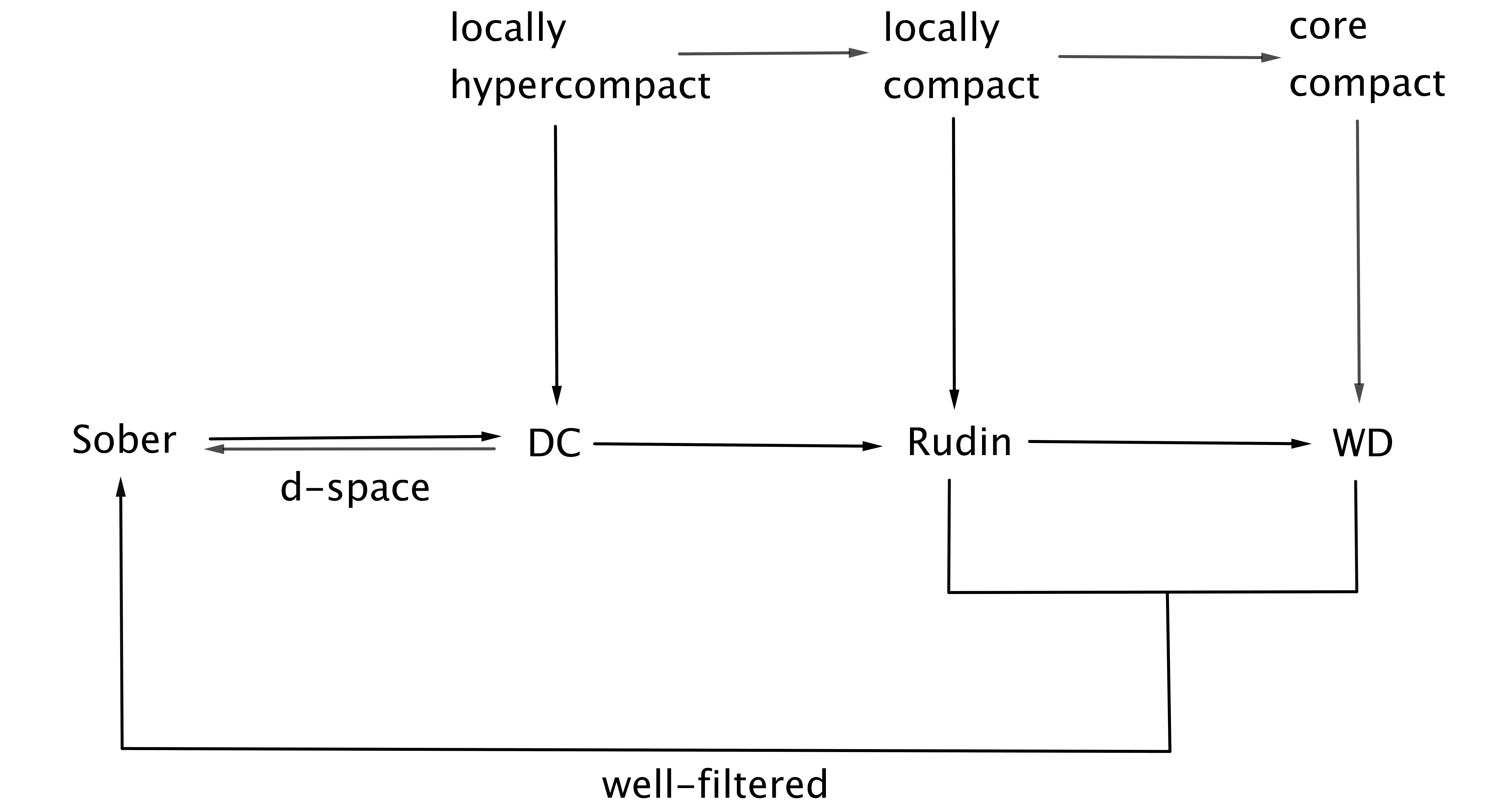}
	\caption{Certain relations among some kinds of spaces}
\end{figure}

\begin{question}\label{core compact is Rudin space question}  Is every core compact $T_0$ space a Rudin space?
\end{question}

In \cite{Shenchon}, it is shown that the closed subspaces and retracts of Rudin spaces are again Rudin spaces.

\begin{lemma}\label{Rudinsetprod} (\cite{Shenchon})
	Let	$X=\prod_{i\in I}X_i$ be the product of a family $\{X_i:i\in I\}$ of $T_0$ spaces and $A\in \ir (X)$. Then the following conditions are equivalent:
\begin{enumerate}[\rm (1)]
	\item $A$ is a Rudin set.
	\item $p_i(A)$ is a Rudin set for each $i\in I$.
\end{enumerate}
\end{lemma}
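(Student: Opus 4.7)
The plan is to prove the two implications separately; the forward direction follows by refining the projected witness with $\overline{A}$, while the converse requires lifting compact saturated data from the factors back to the product.

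For $(1) \Rightarrow (2)$: Given a filtered family $\mathcal{K} \subseteq \mk(X)$ with $\overline{A} \in m(\mathcal{K})$, the naive candidate $\{\ua p_i(K) : K \in \mathcal{K}\}$ (the image under the functor $P_S(p_i)$ of Lemma \ref{Ps functor}) is a filtered family of compact saturated sets in $X_i$ that is met by $\overline{p_i(A)}$, but minimality can fail because preimages of closed subsets of $\overline{p_i(A)}$ need not sit inside $\overline{A}$. Instead I would use the refinement $\mathcal{K}_i = \{\ua p_i(K \cap \overline{A}) : K \in \mathcal{K}\}$: each $K \cap \overline{A}$ is compact (closed in the compact set $K$) and nonempty since $\overline{A}$ meets $K$, so $\ua p_i(K \cap \overline{A}) \in \mk(X_i)$, and $\mathcal{K}_i$ inherits filteredness from $\mathcal{K}$. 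To verify minimality, suppose $F \subseteq \overline{p_i(A)}$ is closed and meets every $\ua p_i(K \cap \overline{A})$; then for each $K \in \mathcal{K}$ there exist $y \in F$ and $a \in K \cap \overline{A}$ with $y \geq p_i(a)$, so $p_i(a) \in F$ (since $F$ is a lower set), whence $a \in p_i^{-1}(F) \cap \overline{A} \cap K$. Hence $p_i^{-1}(F) \cap \overline{A}$ is a closed subset of $\overline{A}$ belonging to $M(\mathcal{K})$; minimality of $\overline{A}$ forces the equality $p_i^{-1}(F) \cap \overline{A} = \overline{A}$, so $p_i(\overline{A}) \subseteq F$ and $\overline{p_i(A)} = \overline{p_i(\overline{A})} \subseteq F$.

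For $(2) \Rightarrow (1)$: Let $\mathcal{K}^i \subseteq \mk(X_i)$ witness $\overline{p_i(A)}$ as a Rudin set. The available compact saturated subsets of the possibly infinite product include the supercompact sets $\ua a$ for $a \in \overline{A}$, and any intersection $\ua a \cap \bigcap_{i \in I_0} p_i^{-1}(K_i)$ for finite $I_0 \subseteq I$ and $K_i \in \mathcal{K}^i$ with $p_i(a) \in K_i$ remains supercompact saturated, since any open cover contains $a$ and therefore covers $\ua a$. I would take $\mathcal{K}$ to consist of all such sets as $(a, I_0, (K_i))$ varies. Filteredness is achieved by, given two members, refining coordinate-wise via filteredness of each $\mathcal{K}^i$ and choosing a common base point in $\overline{A}$ compatible with the combined constraints. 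For minimality, I would use the dual characterization: given open $U \subseteq X$ with $\overline{A} \cap U \neq \emptyset$, decompose $U$ into basic open sets of the form $\bigcap_{i \in I_0} p_i^{-1}(V_i)$ and invoke the minimality of each $\overline{p_i(A)}$ in $m(\mathcal{K}^i)$ to select $K_i \in \mathcal{K}^i$ with $K_i \cap \overline{p_i(A)} \subseteq V_i$, producing a member of $\mathcal{K}$ whose intersection with $\overline{A}$ lies in $U$.

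The main obstacle will be in $(2) \Rightarrow (1)$, specifically the simultaneous realization of a base point $a \in \overline{A}$ with $p_i(a) \in K_i$ for all $i$ in a finite subset. By irreducibility of $\overline{A}$, any finite intersection $\bigcap_{i \in I_0} (\overline{A} \cap p_i^{-1}(V_i))$ is nonempty whenever each $V_i$ is open with $V_i \cap \overline{p_i(A)} \neq \emptyset$, but passing from open neighborhoods to the saturated $K_i$ themselves is delicate because $\overline{A}$ need not be compact. I expect this passage to be exactly where the minimality part of each $\mathcal{K}^i$ is used: the hypothesis $K_i \cap \overline{p_i(A)} \subseteq V_i$ together with the density of $p_i(A)$ in $\overline{p_i(A)}$ should force the required witness points into $\overline{A}$, at which point filteredness and minimality fall into place.
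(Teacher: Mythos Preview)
Your argument for $(1)\Rightarrow(2)$ is correct and is essentially the standard one: intersecting each $K\in\mathcal K$ with $\overline{A}$ before projecting is exactly what is needed to push minimality forward along $p_i$.

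The genuine gap is in $(2)\Rightarrow(1)$. Because every $K_i\in\mathcal K^i$ is \emph{saturated}, the constraint $p_i(a)\in K_i$ that you impose forces $\ua a\subseteq p_i^{-1}(K_i)$, so the set
\[
\ua a\ \cap\ \bigcap_{i\in I_0} p_i^{-1}(K_i)
\]
is nothing other than $\ua a$ itself. Your family $\mathcal K$ therefore collapses to $\{\ua a : a\in\overline{A}\}$, and this family is filtered in the Smyth order precisely when $\overline{A}$ is \emph{directed}. Irreducible closed sets need not be directed, so filteredness already fails---well before the ``simultaneous realization of a base point'' issue you flagged as the main obstacle. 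Dropping the constraint $p_i(a)\in K_i$ does not help either: the set then factors as $\prod_{i\in I_0}\bigl(\ua p_i(a)\cap K_i\bigr)\times\prod_{j\notin I_0}\ua p_j(a)$, and $\ua p_i(a)\cap K_i$ need not be compact without coherence of $X_i$.

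The paper gives no proof of this lemma, citing \cite{Shenchon}. The construction there does not anchor the compact sets at a single point of $\overline{A}$; instead it builds the witnessing family out of the compact pieces $K_i\cap\overline{p_i(A)}$ in finitely many coordinates and then argues separately that $\overline{A}$ meets each such set and is minimal with this property. The missing idea in your attempt is a source of compact saturated sets in the product that does not rely on a principal filter $\ua a$.
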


\begin{theorem}\label{rudinprod} (\cite{xu-shen-xi-zhao1})
	Let $\{X_i:i\in I\}$ be a family of $T_0$ spaces. Then the following two conditions are equivalent:
	\begin{enumerate}[\rm(1)]
		\item The product space $\prod_{i\in I}X_i$ is a Rudin space.
		\item For each $i \in I$, $X_i$ is a Rudin space.
	\end{enumerate}
\end{theorem}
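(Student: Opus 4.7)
The plan is to derive the equivalence directly from Lemma \ref{Rudinsetprod}, which already transfers the Rudin property between a subset of a product space and its coordinate projections. With that lemma in hand, both implications become short.

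For the direction $(2) \Rightarrow (1)$, I would take an arbitrary $A \in \ir_c(\prod_{i\in I} X_i)$ and aim to verify that each $p_i(A)$ has the Rudin property, so that Lemma \ref{Rudinsetprod} then yields $A \in \mathsf{RD}(\prod_{i\in I} X_i)$. Since $p_i$ is continuous and $A$ is irreducible, $p_i(A)$ is irreducible in $X_i$, and hence $\overline{p_i(A)} \in \ir_c(X_i)$. By hypothesis $X_i$ is a Rudin space, so $\overline{p_i(A)}$ is a Rudin set, which by Definition~\ref{Rudin space WD space}(1) is exactly the assertion that $p_i(A)$ has the Rudin property. Applying Lemma \ref{Rudinsetprod} concludes that $A$ is a Rudin set, so the product is a Rudin space.

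For the converse $(1) \Rightarrow (2)$, I would fix $i_0 \in I$ and an arbitrary $A_{i_0} \in \ir_c(X_{i_0})$, and build a closed irreducible subset $B$ of the product with $p_{i_0}(B) = A_{i_0}$, so that the Rudin property of $A_{i_0}$ is reduced to that of $B$ via Lemma \ref{Rudinsetprod}. Concretely, I would choose $x_j \in X_j$ for each $j \neq i_0$ (assuming every $X_j$ is nonempty, otherwise the product is empty and the statement is trivial) and set $B = \prod_{j\in I} B_j$, where $B_{i_0} = A_{i_0}$ and $B_j = \overline{\{x_j\}}$ for $j \neq i_0$. Then $B$ is closed in $\prod_{i\in I} X_i$, $p_{i_0}(B) = A_{i_0}$, and $B$ is irreducible as a product of irreducible closed subsets in the product topology. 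Since $\prod_{i\in I} X_i$ is assumed to be a Rudin space, $B$ is a Rudin set, and Lemma \ref{Rudinsetprod} then gives that $A_{i_0} = p_{i_0}(B)$ is a Rudin set in $X_{i_0}$, as required.

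The main technical point is the claim that a product of irreducible closed subsets is irreducible in the product topology, even for an arbitrary (possibly infinite) index set. The finite case is routine; for the general case I would run a short basic-open argument: given two basic opens $U = \prod_{j\in J_1} U_j \times \prod_{j\notin J_1} X_j$ and $V = \prod_{j\in J_2} V_j \times \prod_{j\notin J_2} X_j$, both meeting $B = \prod_{j\in I} B_j$, one has $U_j \cap B_j \neq \emptyset$ and $V_j \cap B_j \neq \emptyset$ for $j \in J_1 \cap J_2$, so the irreducibility of each such $B_j$ yields $U_j \cap V_j \cap B_j \neq \emptyset$, and coordinatewise selection produces a point of $U \cap V \cap B$. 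Once this is in place, the rest of the argument is merely a matter of plugging into Lemma \ref{Rudinsetprod}.
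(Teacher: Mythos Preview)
Your argument is correct and follows the natural route suggested by the paper's placement of Lemma~\ref{Rudinsetprod} immediately before this theorem. Note, however, that the present paper is a survey and does not actually include a proof of Theorem~\ref{rudinprod}; it simply cites \cite{xu-shen-xi-zhao1}, so there is no in-paper proof to compare against, but your derivation from Lemma~\ref{Rudinsetprod} is exactly the expected one.
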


It is proved in \cite{xu-shen-xi-zhao1} that closed subspaces and retracts of well-filtered determined spaces are again well-filtered determined spaces. But we do not know wether a saturated (especially, an open) subspace of a well-filtered determined space is still well-filtered determined.

\begin{lemma}\label{WDsetprod} (\cite{xu-shen-xi-zhao1})
	Let	$\{X_i: 1\leq i\leq n\}$ be a finite family of $T_0$ spaces and $X=\prod\limits_{i=1}^{n}X_i$ the product space. For $A\in\ir (X)$, the following conditions are equivalent:
\begin{enumerate}[\rm (1)]
	\item $A$ is a $\wdd$ set.
	\item $p_i(A)$ is a $\wdd$ set for each $1\leq i\leq n$.
\end{enumerate}
\end{lemma}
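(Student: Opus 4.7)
The plan is to dispatch $(1) \Rightarrow (2)$ by a routine composition, and then reduce $(2) \Rightarrow (1)$ to the two-factor case by induction and build a single auxiliary map that lets us apply the $\wdd$ hypothesis to one coordinate at a time.

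For $(1) \Rightarrow (2)$, given a continuous $g : X_i \to Y$ into a well-filtered space $Y$, the composite $g \circ p_i : X \to Y$ is continuous, so by hypothesis there is a unique $y_A \in Y$ with $\overline{(g \circ p_i)(A)} = \overline{\{y_A\}}$. Since $(g \circ p_i)(A) = g(p_i(A))$ and $p_i(A)$ is irreducible as the continuous image of an irreducible set, this exhibits $p_i(A)$ as a $\wdd$ set; uniqueness of $y_A$ comes for free from the $T_0$ separation of $Y$.

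For $(2) \Rightarrow (1)$ it suffices to treat $n = 2$ and then iterate. Let $f : X_1 \times X_2 \to Y$ be continuous with $Y$ well-filtered. For each $x_1 \in X_1$ the slice $f_{x_1} : X_2 \to Y$, $x_2 \mapsto f(x_1, x_2)$, is continuous, so the $\wdd$ property of $p_2(A)$ delivers a unique $y(x_1) \in Y$ with $\overline{f_{x_1}(p_2(A))} = \overline{\{y(x_1)\}}$. I would then check that $y : X_1 \to Y$ is continuous: writing an open $V \subseteq Y$ as $f^{-1}(V) = \bigcup_{j} (U_1^j \times U_2^j)$ in basic form and using that $y(x_1) \in V$ iff $\{x_1\} \times p_2(A)$ meets $f^{-1}(V)$, one obtains $y^{-1}(V) = \bigcup \{U_1^j : p_2(A) \cap U_2^j \neq \emptyset\}$, which is open. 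Applying the $\wdd$ hypothesis to $p_1(A)$ with the map $y$ then yields a unique $y_A \in Y$ with $\overline{y(p_1(A))} = \overline{\{y_A\}}$, and I claim this $y_A$ satisfies $\overline{f(A)} = \overline{\{y_A\}}$.

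The crux, and the step I expect to be the main obstacle, is showing $\overline{y(p_1(A))} = \overline{f(A)}$, because a slice value $f_{x_1}(x_2)$ with $x_1 \in p_1(A)$ and $x_2 \in p_2(A)$ need not itself lie in $f(A)$. For $\overline{y(p_1(A))} \subseteq \overline{f(A)}$, pick $x_1 \in p_1(A)$ and open $V \ni y(x_1)$; unwinding the construction of $y$ produces some $x_2 \in p_2(A)$ with $(x_1, x_2) \in f^{-1}(V)$ and a basic box $U_1 \times U_2 \subseteq f^{-1}(V)$ containing $(x_1, x_2)$. Both $U_1 \times X_2$ and $X_1 \times U_2$ meet $A$ (by the very meaning of $p_1(A)$ and $p_2(A)$), and the irreducibility of $A$ in the product is precisely what forces $A \cap (U_1 \times U_2) \neq \emptyset$, hence $f(A) \cap V \neq \emptyset$. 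For the reverse inclusion, each $(x_1, x_2) \in A$ satisfies $f(x_1, x_2) \in f_{x_1}(p_2(A)) \subseteq \overline{\{y(x_1)\}}$, so $f(x_1, x_2) \leq y(x_1)$ in the specialization order; every open neighbourhood of $f(x_1, x_2)$ therefore contains $y(x_1) \in y(p_1(A))$, giving $f(A) \subseteq \overline{y(p_1(A))}$. Uniqueness of $y_A$ follows from the $T_0$-ness of $Y$.
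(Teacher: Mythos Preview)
Your argument is correct. The paper itself does not prove this lemma; it merely records it as a result of \cite{xu-shen-xi-zhao1}, so there is no in-paper proof to compare against. Your treatment of $(1)\Rightarrow(2)$ via composition with $p_i$ is the standard one, and your $(2)\Rightarrow(1)$ argument is sound: the key computation $y^{-1}(V)=\bigcup\{U_1^j: p_2(A)\cap U_2^j\neq\emptyset\}$ correctly establishes continuity of the auxiliary map $y$, and the two inclusions for $\overline{f(A)}=\overline{y(p_1(A))}$ are handled properly, with irreducibility of $A$ used exactly where it is needed (to force $A\cap(U_1\times U_2)\neq\emptyset$ from the two separate projection hits). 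The induction to general $n$ also goes through, since $p_{2,\ldots,n}(A)$ is the continuous image of $A$ (hence irreducible) and its projections onto the factors $X_2,\ldots,X_n$ coincide with the original $p_i(A)$, so the inductive hypothesis applies.

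One cosmetic remark: in $(1)\Rightarrow(2)$ you need not verify separately that $p_i(A)$ is irreducible, since the definition of a $\wdd$ set only requires that $\overline{g(p_i(A))}$ be a point-closure for every such $g$; irreducibility of $p_i(A)$ is automatic anyway (continuous image of irreducible), but it is not part of what must be checked.
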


\begin{corollary}\label{WDclosedsetprod} (\cite{xu-shen-xi-zhao1})
	Let	$X=\prod\limits_{i=1}^{n}X_i$ be the product of a finite family $\{X_i: 1\leq i\leq n\}$ of $T_0$ spaces. If $A\in\wdd (X)$, then $A=\prod\limits_{i=1}^{n}p_i(X_i)$, and $p_i(A)\in \wdd (X_i)$ for all $1\leq i \leq n$.
\end{corollary}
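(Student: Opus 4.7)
The plan is to combine Lemma \ref{WDsetprod} with a separate geometric argument that decomposes any irreducible closed subset of a finite product as a product of its projections. By Proposition \ref{DRWIsetrelation}, $A\in\wdd(X)\subseteq\ir_c(X)$, so $A$ is closed and irreducible; the second assertion, $p_i(A)\in\wdd(X_i)$, is immediate from Lemma \ref{WDsetprod} (the implication (1)$\Rightarrow$(2)). The remaining content is the product decomposition $A=\prod_{i=1}^{n}p_i(A)$, which I would establish using only the irreducibility and closedness of $A$.

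I would proceed by induction on $n$, reducing to the case $n=2$ via the canonical homeomorphism $X_1\times\cdots\times X_n\cong X_1\times(X_2\times\cdots\times X_n)$ together with the compatibility of the projections. For $n=2$, the inclusion $A\subseteq p_1(A)\times p_2(A)$ is immediate from the definition of projections. For the reverse inclusion, fix $(x_1,x_2)\in p_1(A)\times p_2(A)$ and any basic open neighborhood $U_1\times U_2$ of $(x_1,x_2)$. Since $x_i\in p_i(A)\cap U_i$ for $i=1,2$, both $A\cap p_1^{-1}(U_1)$ and $A\cap p_2^{-1}(U_2)$ are nonempty. Suppose, for a contradiction, that $A\cap(U_1\times U_2)=\emptyset$. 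Then
\[
A\subseteq \bigl((X_1\setminus U_1)\times X_2\bigr)\cup\bigl(X_1\times(X_2\setminus U_2)\bigr),
\]
a union of two closed subsets of $X$. By irreducibility of $A$, it lies inside one summand, which immediately contradicts $A\cap p_i^{-1}(U_i)\neq\emptyset$ for the corresponding $i$. Hence every basic open neighborhood of $(x_1,x_2)$ meets $A$, which forces $(x_1,x_2)\in\overline{A}=A$.

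The main subtlety is conceptual rather than computational: one must recognize that the product decomposition $A=\prod p_i(A)$ is forced by the irreducibility of $A$ alone, the $\wdd$ hypothesis entering only through Lemma \ref{WDsetprod} in order to transport $\wdd$-ness to each factor. A minor bookkeeping point is that the argument above in fact yields that each $p_i(A)$ is closed in $X_i$, since $p_i(A)=p_i\bigl(p_1(A)\times p_2(A)\bigr)$ is an irreducible closed projection of a product of (necessarily closed) factors — one must verify that this identification is consistent with the inductive step when passing from $n=2$ to general $n$, but this is routine.
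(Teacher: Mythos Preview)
Your argument is correct and is precisely the natural derivation the paper intends: the survey gives no explicit proof here (the result is cited from \cite{xu-shen-xi-zhao1}), and the implicit route is exactly yours --- invoke Lemma~\ref{WDsetprod} for the $\wdd$-ness of the projections, and use the standard fact that an irreducible closed subset of a finite product of $T_0$ spaces coincides with the product of its projections, proved via the irreducibility/basic-open argument you give.

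One small correction to your closing paragraph: the justification you offer for the closedness of $p_i(A)$ is circular --- you appeal to ``necessarily closed factors'' before having established that the factors are closed, and in any case projections of closed sets need not be closed. The closedness of $p_i(A)$ follows cleanly in either of two ways: directly from the definition of $\wdd(X_i)$ (its members are closed by convention, so Lemma~\ref{WDsetprod} already delivers this), or by slicing once $A=p_1(A)\times p_2(A)$ is known --- fix any $b\in p_2(A)$ and pull $A$ back along the continuous map $x_1\mapsto(x_1,b)$ to see that $p_1(A)$ is closed. This is a cosmetic issue and does not affect the validity of your main argument.
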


\begin{corollary}\label{WDspace prod} (\cite{xu-shen-xi-zhao1})
	Let $\{X_i: 1\leq i\leq n\}$ be a finite family of $T_0$ spaces. Then the following two conditions are equivalent:
	\begin{enumerate}[\rm(1)]
		\item The product space $\prod\limits_{i=1}^{n}X_i$ is a well-filtered determined space.
		\item For each $1\leq i \leq n$, $X_i$ is a well-filtered determined space.
	\end{enumerate}
\end{corollary}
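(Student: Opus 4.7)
The plan is to prove the two implications separately, leveraging the fact, already established in the paper, that retracts of well-filtered determined spaces are well-filtered determined, together with the characterization of $\wdd$ subsets of a finite product given by Lemma \ref{WDsetprod}.

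For the direction $(1)\Rightarrow (2)$, I would show that each factor is (up to trivial cases) a retract of the product. Assuming each $X_j$ is nonempty (the degenerate case being handled separately), pick a base point $b_j\in X_j$ for each $j\neq i$ and define the section $s_i:X_i\to X$ by $s_i(x)=(b_1,\dots,b_{i-1},x,b_{i+1},\dots,b_n)$. Then $s_i$ is a continuous topological embedding and the projection $p_i:X\to X_i$ is a continuous retraction onto (the image of) $X_i$. Since $X$ is a $\wdd$ space and retracts of $\wdd$ spaces are $\wdd$ (as noted in the paper just before Lemma \ref{WDsetprod}), it follows that $X_i$ is a $\wdd$ space.

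For the direction $(2)\Rightarrow (1)$, let $A\in\ir_c(X)$. Since continuous images of irreducible sets are irreducible, each $p_i(A)$ is an irreducible subset of $X_i$, so $\overline{p_i(A)}\in\ir_c(X_i)$. Because $X_i$ is a $\wdd$ space, $\overline{p_i(A)}$ is a $\wdd$ set, and therefore $p_i(A)$ itself is a $\wdd$ set (a subset is $\mathbf{K}$-determined iff its closure is, as observed right after Definition \ref{KD subset}). Now Lemma \ref{WDsetprod} applies and gives that $A$ is a $\wdd$ set. Hence every irreducible closed subset of $X$ is well-filtered determined, so $X$ is a $\wdd$ space.

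The main obstacle is conceptual rather than technical: both directions reduce almost immediately to previously established results, so the proof is essentially a bookkeeping exercise. The only subtlety worth flagging is ensuring the section $s_i$ used in $(1)\Rightarrow (2)$ actually exists, which requires each $X_j$ with $j\neq i$ to be nonempty; this is harmless in the context of the corollary since if some $X_j$ is empty the product is empty, making the stated equivalence trivial.
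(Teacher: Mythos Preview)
Your proposal is correct and matches exactly the argument the paper's arrangement invites: the corollary is placed immediately after Lemma~\ref{WDsetprod} and after the remark that retracts of $\wdd$ spaces are $\wdd$, and your two directions use precisely those two facts. One small quibble: your parenthetical about the empty case is not quite right---if some $X_j=\emptyset$ then the product is empty and hence vacuously $\wdd$, but this does \emph{not} force the remaining factors to be $\wdd$, so the equivalence genuinely needs the tacit nonemptiness convention rather than being ``trivial''.
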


\begin{question}\label{K-infiniteset-prodquestion} (\cite{xu20}) Let $\mathbf{K}$ be a full subcategory of $\mathbf{Top}_0$ containing $\mathbf{Sob}$ (in particular, a Keimel-Lawson category) and $X=\prod_{i\in I}X_i$ be  the product space of a family $\{X_i: i\in I\}$ of $T_0$ spaces. If each $A_i\subseteq X_i~(i\in I)$ is $\mathbf{K}$-determined, must the product set $\prod_{i\in I}A_i$ be $\mathbf{K}$-determined?
\end{question}

\begin{question}\label{WDinfinite-prodquestion} (\cite{xu20}) Let $\mathbf{K}$ be a full category of $\mathbf{Top}_0$ containing $\mathbf{Sob}$ (in particular, a Keimel-Lawson category). Is the product space of an arbitrary family of $\mathbf{K}$-determined spaces $\mathbf{K}$-determined?
\end{question}

\begin{question}\label{WD set infinite prod question} (\cite{xu-shen-xi-zhao1}) Let $X=\prod_{i\in I}X_i$ be the product space of a family $\{X_i: i\in I\}$ of $T_0$ spaces. If each $A_i\subseteq X_i (i\in I)$ is well-filtered determined, must the product set $\prod_{i\in I}A_i$ be well-filtered determined?
\end{question}

\begin{question}\label{WD space infinite prod question} (\cite{xu-shen-xi-zhao1}) Is the product space of an arbitrary family of well-filtered determined spaces well-filtered determined?
\end{question}

\section{Smyth power spaces}

The Smyth power spaces are very important structures in domain theory, which play a fundamental role in modeling the semantics of non-deterministic programming languages (see \cite{AJ94, redbook, Schalk}). There naturally arises a question of which topological properties are preserved by the Smyth power spaces.

It was proved by Schalk \cite{Schalk} that the Smyth power space $P_S(X)$ of a sober space $X$ is sober (see also \cite[Theorem 3.13]{Klause-Heckmann}), and the upper Vietoris topology (that is, the topology of Smyth power space) agrees with the Scott topology on $\mk (X)$ if $X$ is a locally compact sober space. Xi and Zhao \cite{xi-zhao-MSCS-well-filtered} proved that a $T_0$ space $X$ is well-filtered iff $P_S(X)$ is a $d$-space. Brecht and Kawai \cite{Brecht19} pointed out that $P_S(X)$ is second-countable for a second-countable $T_0$ space $X$. Recently, we further proved that a $T_0$ space $X$ is well-filtered iff $P_S(X)$ is well-filtered \cite{xuxizhao}.

\begin{theorem}\label{Heckman-Keimel theorem}\emph{(Heckmann-Keimel-Schalk Theorem)} (\cite{Klause-Heckmann, Schalk})  For a $T_0$ space $X$, the following conditions are equivalent:
\begin{enumerate}[\rm (1)]
            \item $X$ is sober.
 \item  For any $\mathcal A\in \ir(P_S(X))$ and $U\in \mathcal O(X)$, $\bigcap\mathcal K\subseteq U$ implies $K \subseteq U$ for some $K\in \mathcal A$.
           \item $P_S(X)$ is sober.
\end{enumerate}
\end{theorem}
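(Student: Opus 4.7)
The plan is to prove the cycle $(1) \Rightarrow (2) \Rightarrow (3) \Rightarrow (1)$, with condition (2) serving as the technical bridge. The central gadget in both $(1) \Rightarrow (2)$ and $(2) \Rightarrow (3)$ is the collection
\[
\mathcal{F}_{\mathcal{A}} \;=\; \{U \in \mathcal{O}(X) : K \subseteq U \text{ for some } K \in \mathcal{A}\}
\;=\; \{U \in \mathcal{O}(X) : \mathcal{A} \cap \Box U \neq \emptyset\}
\]
attached to an irreducible $\mathcal{A} \subseteq P_S(X)$. I would first verify, using that $\Box U_1 \cap \Box U_2 = \Box(U_1 \cap U_2)$ together with the compactness of the members of $\mathcal{A}$, that $\mathcal{F}_{\mathcal{A}}$ is an open filter (upward closure is automatic; closure under finite intersections uses irreducibility of $\mathcal{A}$; Scott-openness uses that each $K \in \mathcal{A}$ is compact). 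I would then check that $\bigcap \mathcal{F}_{\mathcal{A}} = \bigcap \mathcal{A}$: the inclusion $\supseteq$ uses $\Phi(K) \subseteq \mathcal{F}_{\mathcal{A}}$ for each $K \in \mathcal{A}$; the reverse uses that every $K \in \mathcal{A}$ is saturated, hence $K = \bigcap \Phi(K)$.

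For $(1) \Rightarrow (2)$, given $\bigcap \mathcal{A} \subseteq U$ with $X$ sober, the Hofmann-Mislove Theorem applied to $\mathcal{F}_{\mathcal{A}}$ yields a $K_0 \in \mathord{\mathsf{K}}(X)$ with $\mathcal{F}_{\mathcal{A}} = \Phi(K_0)$ and $K_0 = \bigcap \mathcal{F}_{\mathcal{A}} = \bigcap \mathcal{A}$. Hence $K_0 \subseteq U$, so $U \in \mathcal{F}_{\mathcal{A}}$, which gives exactly some $K \in \mathcal{A}$ with $K \subseteq U$.

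For $(2) \Rightarrow (3)$, let $\mathcal{A} \in \ir_c(P_S(X))$ and set $K^* := \bigcap \mathcal{A}$. I would show $K^* \in \mathord{\mathsf{K}}(X)$: saturation is clear since each $K \in \mathcal{A}$ is saturated; non-emptiness follows from (2) with $U = \emptyset$ (otherwise some $K \in \mathcal{A}$ would be empty). The main obstacle is compactness, because (2) only applies to a single open set rather than a family. My reduction is to replace any open cover $\{U_i\}_{i \in I}$ of $K^*$ by the directed family of finite unions $\mathcal{V}$ and set $V = \bigcup \mathcal{V}$; then $K^* \subseteq V$, so by (2) some $K \in \mathcal{A}$ lies in $V$. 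Since $K$ itself is compact and $\mathcal{V}$ is directed, $K \subseteq U_{i_1} \cup \cdots \cup U_{i_n}$; but $K^* \subseteq K$, so the same finite subfamily covers $K^*$. Next I show $\overline{\{K^*\}} = \mathcal{A}$ in $P_S(X)$: using $\leq_{P_S(X)} = \sqsubseteq$, this closure equals $\{K : K^* \subseteq K\}$, which contains $\mathcal{A}$ trivially. For $K^* \in \mathcal{A}$, I argue by contradiction: if $K^* \notin \mathcal{A}$, pick a basic open $\Box U \ni K^*$ disjoint from $\mathcal{A}$; then $\bigcap \mathcal{A} = K^* \subseteq U$, so (2) produces $K \in \mathcal{A} \cap \Box U$, a contradiction. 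Uniqueness of the generic point is automatic from $T_0$.

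For $(3) \Rightarrow (1)$, given $C \in \ir_c(X)$, I exploit that $\xi_X : X \to P_S(X)$ is a topological embedding (Remark \ref{xi continuous}): $\xi_X(C)$ is irreducible in $P_S(X)$, so $\overline{\xi_X(C)}$ is irreducible closed. Sobriety of $P_S(X)$ yields a unique $K^* \in \mathord{\mathsf{K}}(X)$ with $\overline{\xi_X(C)} = \overline{\{K^*\}}$. Translating through the subbasic opens $\Box U$, this equality amounts to: $K^* \subseteq U \iff U \cap C \neq \emptyset$ for all $U \in \mathcal{O}(X)$. Picking any $y \in K^*$ (non-empty since $K^* \in \mathord{\mathsf{K}}(X)$), I claim $C = \overline{\{y\}}$. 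Indeed, for any $z \in C$ and any open $V \ni z$ we have $V \cap C \neq \emptyset$, hence $K^* \subseteq V$ and $y \in V$, giving $z \leq y$; conversely $y \in K^* \subseteq \overline{C} = C$, so $\overline{\{y\}} \subseteq C$. Uniqueness again follows from $T_0$.

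The main obstacle I anticipate is the compactness of $K^* = \bigcap \mathcal{A}$ in $(2) \Rightarrow (3)$, because condition (2) is phrased for a single open set; the directed-union-of-finite-subcovers trick above is the essential move that makes (2) powerful enough to force compactness of an arbitrary intersection of members of $\mathcal{A}$.
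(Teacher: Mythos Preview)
The paper does not prove this theorem; it is quoted as a known result from the cited references, so there is no in-paper argument to compare against. Your cycle $(1)\Rightarrow(2)\Rightarrow(3)\Rightarrow(1)$ is the natural one, and your arguments for $(1)\Rightarrow(2)$ (via the open filter $\mathcal{F}_{\mathcal{A}}$ and Hofmann--Mislove) and for $(2)\Rightarrow(3)$ (showing $K^*=\bigcap\mathcal{A}\in\mk(X)$ and $\overline{\{K^*\}}=\mathcal{A}$) are correct.

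There is, however, a genuine gap in your $(3)\Rightarrow(1)$. After obtaining $K^*$ with $\overline{\xi_X(C)}=\overline{\{K^*\}}$ and the equivalence $K^*\subseteq U\iff U\cap C\neq\emptyset$, you pick \emph{any} $y\in K^*$ and assert ``$y\in K^*\subseteq\overline{C}=C$''. The inclusion $K^*\subseteq C$ is false in general. Take $X$ to be the Sierpi\'nski space $\{a,b\}$ with $a<b$ and $C=\{a\}$. Then $\xi_X(C)=\{\ua a\}=\{X\}$, so $\overline{\xi_X(C)}=\{X\}$ in $P_S(X)$ and $K^*=X=\{a,b\}\not\subseteq C$. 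Choosing $y=b$ gives $\overline{\{b\}}=X\neq C$, so ``any $y$'' does not work. Your inclusion $C\subseteq\da y$ is correct for every $y\in K^*$ (each such $y$ is an upper bound of $C$), but the reverse inclusion needs $y\in C$.

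The fix is short: from the equivalence, taking $U=X\setminus C$ gives $C\cap U=\emptyset$, hence $K^*\not\subseteq X\setminus C$, i.e.\ $K^*\cap C\neq\emptyset$. Choose $y\in K^*\cap C$; then $\da y\subseteq C$ because $C$ is closed, and together with $C\subseteq\da y$ you get $C=\overline{\{y\}}$. (A posteriori one sees $K^*=\ua y$, but you should not pick an arbitrary element of $K^*$ at the outset.)
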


For the well-filteredness of Smyth power space, we have the following similar result.

\begin{theorem}\label{Smythwf} (\cite{xu-shen-xi-zhao1, xuxizhao})
	For a $T_0$ space, the following conditions are equivalent:
\begin{enumerate}[\rm (1)]
		\item $X$ is well-filtered.
        \item $P_S(X)$ is a $d$-space.
        \item $P_S(X)$ is well-filtered.
\end{enumerate}
\end{theorem}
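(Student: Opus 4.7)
The plan is to prove the cycle $(3) \Rightarrow (2) \Rightarrow (1) \Rightarrow (3)$. The first implication is immediate since every well-filtered space is a $d$-space. For $(2) \Rightarrow (1)$, note that $P_S(X)$ being a $d$-space makes $(\mk(X), \sqsubseteq)$ a dcpo with every open of $P_S(X)$ Scott-open in it; in particular each basic open $\Box U$ is Scott-open. Given $\mathcal{K} \in \mathcal{D}(\mk(X))$ with $\bigcap \mathcal{K} \subseteq U$, the dcpo sup $K_0 = \bigsqcup \mathcal{K}$ satisfies $K_0 \subseteq K$ for each $K \in \mathcal{K}$, hence $K_0 \subseteq \bigcap \mathcal{K} \subseteq U$, so $K_0 \in \Box U$; Scott-openness then produces some $K \in \mathcal{K}$ with $K \subseteq U$.

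For the main step $(1) \Rightarrow (3)$, I argue by contradiction. Take $\mathfrak{A} \in \mathcal{D}(\mk(P_S(X)))$ and $\mathcal{U} \in \mathcal{O}(P_S(X))$ with $\bigcap \mathfrak{A} \subseteq \mathcal{U}$ and assume $\mathcal{A} \not\subseteq \mathcal{U}$ for every $\mathcal{A} \in \mathfrak{A}$, so each $\mathcal{A}$ meets $\mathcal{F} := P_S(X) \setminus \mathcal{U}$. Since $\mathfrak{A}$ is directed in the Smyth order on $\mk(P_S(X))$, it is irreducible in $P_S(P_S(X))$, and Topological Rudin's Lemma (Lemma \ref{t Rudin}, applied to the space $P_S(X)$) yields a minimal irreducible closed set $\mathcal{C} \subseteq \mathcal{F}$ still meeting every $\mathcal{A}$.

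Define $B_\mathcal{A} := \bigcup(\mathcal{C} \cap \mathcal{A}) \subseteq X$. Since $\mathcal{C} \cap \mathcal{A}$ is closed in the compact set $\mathcal{A} \subseteq P_S(X)$, a standard open-cover argument that refines any cover of $B_\mathcal{A}$ in $X$ to a basic-open cover of $\mathcal{C} \cap \mathcal{A}$ in $P_S(X)$ gives $B_\mathcal{A} \in \mk(X)$; the family $\{B_\mathcal{A}\}$ is filtered in $\mk(X)$, and the well-filteredness of $X$ yields $K^* := \bigcap_\mathcal{A} B_\mathcal{A} \in \mk(X)$. For each $x \in K^*$, the closed subset $\mathcal{C} \cap \overline{\{\uparrow x\}} = \{K \in \mathcal{C} : x \in K\}$ meets every $\mathcal{A}$ (witnessed by any $K \in \mathcal{C} \cap \mathcal{A}$ with $x \in K$, which exists because $x \in B_\mathcal{A}$), so Rudin-minimality of $\mathcal{C}$ forces $\mathcal{C} \cap \overline{\{\uparrow x\}} = \mathcal{C}$; consequently $K^* \subseteq K$ for every $K \in \mathcal{C}$.

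Down-closedness under $\subseteq$ of each $\mathcal{A}$ then gives $K^* \in \mathcal{A}$ for every $\mathcal{A}$, so $K^* \in \bigcap \mathfrak{A} \subseteq \mathcal{U}$. Writing $\mathcal{U} = \bigcup_j \Box V_j$, some $j^*$ satisfies $K^* \subseteq V_{j^*}$, and a second application of well-filteredness of $X$ — to $\{B_\mathcal{A}\}$ and $V_{j^*}$ — yields $\mathcal{A}_0$ with $B_{\mathcal{A}_0} \subseteq V_{j^*}$. Then every $K \in \mathcal{C} \cap \mathcal{A}_0$ satisfies $K \subseteq V_{j^*}$, so $K \in \Box V_{j^*} \subseteq \mathcal{U}$; but $K \in \mathcal{C} \subseteq \mathcal{F}$ contradicts $K \in \mathcal{U}$. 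The main work is in the third paragraph: invoking Rudin-minimality correctly to promote pointwise containment ($x \in K^* \subseteq K$) uniformly across all $K \in \mathcal{C}$, after which the two applications of well-filteredness of $X$ — first producing $K^*$ inside $\mk(X)$ and second collapsing coverage of $K^*$ by $V_{j^*}$ into full containment of some $B_{\mathcal{A}_0}$ — chain $X$-level and $P_S(X)$-level compactness.
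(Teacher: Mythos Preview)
Your argument is correct. This paper is a survey and does not itself prove Theorem~\ref{Smythwf}; it merely cites the result from \cite{xu-shen-xi-zhao1, xuxizhao}. So there is no in-paper proof to compare against. That said, your approach---applying the Topological Rudin Lemma (Lemma~\ref{t Rudin}) one level up, i.e.\ to $P_S(X)$ and the irreducible family $\mathfrak{A}\subseteq P_S(P_S(X))$, to produce a minimal closed $\mathcal{C}\subseteq P_S(X)\setminus\mathcal{U}$, and then descending to $X$ via the compact sets $B_{\mathcal{A}}=\bigcup(\mathcal{C}\cap\mathcal{A})$---is precisely the strategy used in the cited sources. The two applications of well-filteredness of $X$ (first to get $K^*\in\mk(X)$, then to push $K^*\subseteq V_{j^*}$ up to some $B_{\mathcal{A}_0}\subseteq V_{j^*}$) and the use of Rudin-minimality to obtain $K^*\subseteq K$ for every $K\in\mathcal{C}$ are exactly the key moves.

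A few minor points worth tightening, none of which are genuine gaps: (i) the claim $K^*\in\mk(X)$ uses the standard fact that in a well-filtered space filtered intersections of nonempty compact saturated sets are again nonempty, compact and saturated---you invoke this implicitly; (ii) when you write ``down-closedness under $\subseteq$ of each $\mathcal{A}$'' you mean that $\mathcal{A}$, being saturated in $P_S(X)$, is an upper set for $\sqsubseteq$ and hence closed under passing to $\subseteq$-smaller members of $\mk(X)$; (iii) the identification $\overline{\{\ua x\}}=\{K\in\mk(X): x\in K\}$ in $P_S(X)$ is what makes the minimality argument work, and you use it correctly. The implications $(3)\Rightarrow(2)$ and $(2)\Rightarrow(1)$ are handled cleanly.
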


For the core compactness of Smyth power spaces, one has the following.

\begin{theorem}\label{Smyth power space core compact} (\cite{LJ20})
	For a $T_0$ space, the following conditions are equivalent:
\begin{enumerate}[\rm (1)]
		\item $X$ is locally compact.
        \item $P_S(X)$ is a $C$-space.
        \item $P_S(X)$ is locally compact.
        \item $P_S(X)$ is core compact.
\end{enumerate}
\end{theorem}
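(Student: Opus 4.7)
The plan is to establish the cyclic chain $(1)\Rightarrow(2)\Rightarrow(3)\Rightarrow(4)\Rightarrow(1)$. The implications $(2)\Rightarrow(3)$ and $(3)\Rightarrow(4)$ are immediate from the definitions: every $C$-space is locally compact, and every locally compact $T_0$ space is core compact.

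For $(1)\Rightarrow(2)$, I would fix $K\in P_S(X)$ and a basic open $\Box U\ni K$, and use local compactness of $X$ to produce, for each $y\in K$, an open $W_y$ and a $Q_y\in\mk(X)$ with $y\in W_y\subseteq Q_y\subseteq U$. Compactness of $K$ gives a finite subcover $W_{y_1},\ldots,W_{y_n}$; set $W=\bigcup_{i=1}^n W_{y_i}$ and $Q=\bigcup_{i=1}^n Q_{y_i}\in\mk(X)$. Then $K\in\Box W\subseteq\ua_{P_S(X)}Q\subseteq\Box U$, so $K\in\ii(\ua_{P_S(X)}Q)\subseteq\ua_{P_S(X)}Q\subseteq\Box U$ in $P_S(X)$, which is the $C$-space property at $K$ relative to $\Box U$.

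For $(4)\Rightarrow(1)$---the substantive implication---assume $P_S(X)$ is core compact and fix $x\in X$ together with an open neighbourhood $U\ni x$. Then $\ua x\in\Box U$, so core compactness supplies an open $\mathcal W$ of $P_S(X)$ with $\ua x\in\mathcal W\ll\Box U$; refining along the Smyth base I may assume $\mathcal W=\Box V$ with $x\in V\in\mathcal O(X)$ and $\Box V\ll\Box U$. It suffices to exhibit a $Q\in\mk(X)$ with $V\subseteq Q\subseteq U$, for then $Q$ is a compact saturated neighbourhood of $x$ inside $U$.

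The heart of the argument---and the step I expect to be the main obstacle---is the production of $Q$. My plan is to proceed by contradiction: assuming no such $Q$, I would, for each nonempty finite $F\subseteq V$, consider the open set $\mathcal U_F=\Box U\setminus\overline{\{\ua F\}}\subseteq P_S(X)$. Since the specialisation order on $P_S(X)$ is the Smyth order (Remark~\ref{xi continuous}), the closure $\overline{\{\ua F\}}$ in $P_S(X)$ equals $\{L\in\mk(X):F\subseteq L\}$, whence $\mathcal U_F=\{L\in\Box U:F\not\subseteq L\}$ and $\mathcal U_{F_1}\cup\mathcal U_{F_2}=\mathcal U_{F_1\cup F_2}$, so that $\{\mathcal U_F\}$ is directed; its union $\{L\in\Box U:V\not\subseteq L\}$ is all of $\Box U$ under the supposition. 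Then $\Box V\ll\Box U$ forces $\Box V\subseteq\mathcal U_F$ for some finite $F\subseteq V$, but $\ua F\in\Box V$ (using that $V$ is an upper set containing $F$) and $\ua F\notin\mathcal U_F$ (since $F\subseteq\ua F$), producing the required contradiction. The delicate points are the closure computation and the verification that the family really covers $\Box U$ under the supposition; these are where the Smyth-level structure of $P_S(X)$ is essentially used to upgrade the core compactness hypothesis to local compactness.
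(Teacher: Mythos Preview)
Your proof is correct. The cycle $(1)\Rightarrow(2)\Rightarrow(3)\Rightarrow(4)\Rightarrow(1)$ works as you describe, and the contradiction argument for $(4)\Rightarrow(1)$ is sound: the closure computation $\overline{\{\ua F\}}=\{L\in\mk(X):F\subseteq L\}$ follows directly from Remark~\ref{xi continuous}(1), the family $\{\mathcal U_F\}_{F\in V^{(<\omega)}}$ is directed with union $\Box U$ under the supposition, and the way-below relation $\Box V\ll\Box U$ then yields the contradiction via $\ua F\in\Box V\setminus\mathcal U_F$.

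Note, however, that the present paper is a survey and does not supply its own proof of this theorem: the result is quoted from \cite{LJ20} (Lyu and Jia) without argument. So there is no in-paper proof to compare your attempt against. For what it is worth, the approach in \cite{LJ20} is organised somewhat differently---the key implication there passes through the observation that the embedding $\xi_X:X\hookrightarrow P_S(X)$ reflects the way-below relation on open sets (i.e., $\Box V\ll\Box U$ in $\mathcal O(P_S(X))$ implies $V\ll U$ in $\mathcal O(X)$), together with consonance-type considerations---but your direct contradiction via the directed cover $\{\mathcal U_F\}$ is self-contained and arguably more transparent for this particular implication.
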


Hofmann and Lawson \cite{Hofmann-Lawson} (or \cite[Exercise V-5.25]{redbook}) constructed a second-countable core compact $T_0$ space $X$ in which every compact subset has empty interior (and hence $X$ is not locally compact), and hence by Theorem \ref{Smyth power space core compact}, the core compactness is not preserved by the Smyth power space functor.

\begin{theorem}\label{SmythWF} (\cite{xu-shen-xi-zhao1})  Let $X$ be a $T_0$ space. If $P_S(X)$ is well-filtered determined, then $X$ is well-filtered determined.
\end{theorem}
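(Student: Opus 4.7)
The plan is to lift the irreducible closed set from $X$ into the Smyth power space $P_S(X)$, apply the hypothesis there, and then descend the resulting data back down to $Y$. Fix $A\in\ir_c(X)$; the goal is to verify $A\in\wdd(X)$. Since the canonical map $\xi_X:X\to P_S(X)$ is continuous (Remark~\ref{xi continuous}), $\xi_X(A)$ is irreducible in $P_S(X)$, so $\overline{\xi_X(A)}\in\ir_c(P_S(X))$. The hypothesis on $P_S(X)$ then gives $\overline{\xi_X(A)}\in\wdd(P_S(X))$.

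Let $f:X\to Y$ be an arbitrary continuous map into a well-filtered space $Y$. By Theorem~\ref{Smythwf}, $P_S(Y)$ is well-filtered, and by Lemma~\ref{Ps functor}, $P_S(f):P_S(X)\to P_S(Y)$ is continuous. Applying the $\wdd$-property of $\overline{\xi_X(A)}$ to $P_S(f)$ yields a unique $K_A\in P_S(Y)$ with $\overline{P_S(f)(\overline{\xi_X(A)})}=\overline{\{K_A\}}$ in $P_S(Y)$. Using the naturality identity $P_S(f)\circ\xi_X=\xi_Y\circ f$ (from the one-line calculation $P_S(f)(\ua x)=\ua f(\ua x)=\ua f(x)$), the left-hand side collapses to $\overline{\xi_Y(f(A))}$.

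The main obstacle is the descent: converting the abstract generic point $K_A\in P_S(Y)$ into a concrete $y_A\in Y$ satisfying $\overline{f(A)}=\overline{\{y_A\}}$. The idea is to expand both closures explicitly in $P_S(Y)$. A routine check shows that $K\in\overline{\xi_Y(f(A))}$ iff $K\cap\overline{f(A)}\neq\emptyset$, while $\overline{\{K_A\}}=\{K\in P_S(Y):K_A\subseteq K\}$ by the Smyth order (Remark~\ref{xi continuous}). Substituting $K=\ua y$ for each $y\in\overline{f(A)}$ forces $K_A\subseteq\ua y$, so every element of $K_A$ is an upper bound of $\overline{f(A)}$; substituting $K=K_A$ produces some $y_A\in K_A\cap\overline{f(A)}$. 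Since $y_A\in K_A$ and every element of $K_A$ dominates every element of $\overline{f(A)}$, this $y_A$ is the maximum of $\overline{f(A)}$. Hence $\overline{f(A)}=\da y_A=\overline{\{y_A\}}$, and uniqueness of $y_A$ follows from $T_0$-separation.

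The critical insight is that the equality of closures in $P_S(Y)$ is rigid enough to force $\overline{f(A)}$ to possess a top element in $Y$, which is exactly what is needed for it to be the closure of a singleton. This is where the order-theoretic content of the Smyth power space (compact saturated sets ordered by reverse inclusion) is doing the real work, bridging the gap between a generic compact saturated set upstairs and a genuine point downstairs.
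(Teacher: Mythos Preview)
The paper is a survey and does not supply its own proof of this statement; it merely cites \cite{xu-shen-xi-zhao1}. Your argument is correct and is the natural one: lift $A$ along the embedding $\xi_X$, use that $P_S(Y)$ is well-filtered (Theorem~\ref{Smythwf}) together with the functoriality of $P_S$ (Lemma~\ref{Ps functor}), and then descend. The verification that $\overline{\xi_Y(f(A))}=\{K\in\mk(Y):K\cap\overline{f(A)}\neq\emptyset\}$ and $\overline{\{K_A\}}=\{K\in\mk(Y):K_A\subseteq K\}$ is routine and your deductions from the equality of these two sets (taking $K=\ua y$ and $K=K_A$) correctly force $\overline{f(A)}$ to have a greatest element, which is what is needed.
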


\begin{question}\label{Smyth WF question} (\cite{xu-shen-xi-zhao1}) Is the Smyth power space $P_S(X)$ of a well-filtered determined $T_0$ space $X$  well-filtered determined?
\end{question}

Now we consider the following question: for a first-countable (resp., second-countable) space $X$, does its Smyth power space $P_S(X)$ be first-countable (resp., second-countable)?

First, we have the following result, which was indicated in the proof of \cite[Proposition 6]{Brecht19}.

\begin{theorem}\label{Smyth CII} (\cite{Brecht19}) For a $T_0$ space, the following two conditions are equivalent:
\begin{enumerate}[\rm (1)]
\item $X$ is second-countable.
\item $P_S(X)$ is second-countable.
\end{enumerate}
\end{theorem}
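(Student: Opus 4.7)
The plan is to prove the two directions separately, using the canonical embedding for the easy one and a finite‑union/compactness argument for the other.

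For the direction $(2) \Rightarrow (1)$, I would invoke Remark \ref{xi continuous}: the canonical map $\xi_X : X \longrightarrow P_S(X)$, $x \mapsto \ua x$, is a topological embedding, so $X$ is homeomorphic to the subspace $\xi_X(X) \subseteq P_S(X)$. Second countability is hereditary, so if $P_S(X)$ has a countable base, then so does $\xi_X(X)$, and hence so does $X$.

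For the direction $(1) \Rightarrow (2)$, fix a countable base $\mathcal B$ for $\mathcal O(X)$ and form
\[
\mathcal B^\star = \Bigl\{\bigcup \mathcal F : \mathcal F \in \mathcal B^{(<\omega)}\Bigr\} \cup \{\emptyset\},
\]
which is still countable and is closed under finite unions. I claim that $\{\Box U : U \in \mathcal B^\star\}$ is a (countable) base for $P_S(X)$. By the definition of the upper Vietoris topology, the sets $\Box V$ with $V \in \mathcal O(X)$ form a base, so it suffices to show that for every $K \in \mk(X)$ and every $V \in \mathcal O(X)$ with $K \in \Box V$, there is some $U \in \mathcal B^\star$ with $K \in \Box U \subseteq \Box V$.

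Given such $K$ and $V$, the hypothesis that $\mathcal B$ is a base for $X$ yields, for each $x \in K$, a basic open $B_x \in \mathcal B$ with $x \in B_x \subseteq V$. The family $\{B_x : x \in K\}$ is an open cover of the compact set $K$, so finitely many $B_{x_1}, \ldots, B_{x_n}$ already cover $K$. Setting $U = \bigcup_{i=1}^{n} B_{x_i} \in \mathcal B^\star$, we get $K \subseteq U \subseteq V$, i.e.\ $K \in \Box U \subseteq \Box V$, as required.

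The only delicate point — essentially the sole place where anything nontrivial happens — is recognizing that one must close $\mathcal B$ under \emph{finite unions} before forming the $\Box$-sets; a naive attempt with $\{\Box B : B \in \mathcal B\}$ fails because a compact saturated set $K$ need not be contained in a single basic open $B \in \mathcal B$, only in a finite union of such. Once that bookkeeping is done, the rest is the standard compactness argument above, so I expect no further obstacle.
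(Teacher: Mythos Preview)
Your proof is correct. Note, however, that the paper itself does not supply a proof of this theorem: being a survey, it merely cites the result from \cite{Brecht19} (where, as the paper says, it is ``indicated in the proof of \cite[Proposition~6]{Brecht19}''). Your argument --- the topological embedding $\xi_X$ together with hereditariness of second countability for $(2)\Rightarrow(1)$, and closing a countable base under finite unions before passing to $\Box$-sets for $(1)\Rightarrow(2)$ --- is precisely the standard one and is what one would extract from the cited source; there is nothing to compare it against within the present paper.
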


\begin{theorem}\label{Smyth CII V=S} (\cite{Brecht19}) If $X$ is a second-countable sober space, then the upper Vietoris topology and the Scott topology on $\mk (X)$ coincide.
\end{theorem}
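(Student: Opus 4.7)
The plan is to establish both inclusions: upper Vietoris $\subseteq$ Scott and Scott $\subseteq$ upper Vietoris on $\mk(X)$. The first direction is routine: for any $U\in\mathcal{O}(X)$, the basic open $\Box U$ is lower with respect to inclusion (equivalently, upper in the Smyth order), and if $\mathcal{K}$ is a filtered family in $\mk(X)$ with $\bigcap\mathcal{K}\subseteq U$, then the well-filteredness of $X$ (which follows from sobriety) forces some $K\in\mathcal{K}$ to lie in $\Box U$. Hence $\Box U$ is Scott open, and the upper Vietoris topology is contained in the Scott topology.

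For the reverse inclusion I argue by contradiction. Suppose $\mathcal{U}$ is Scott open in $\mk(X)$ and $K\in\mathcal{U}$, yet no basic open $\Box W$ with $K\subseteq W$ is contained in $\mathcal{U}$. Since $X$ is second-countable and $K$ is compact, the neighborhood filter of $K$ in $X$ admits a countable decreasing base $W_1\supseteq W_2\supseteq\cdots$, obtained by enumerating the finite unions of basic open sets that contain $K$ and then taking initial intersections. The hypothesis then yields, for each $n$, a compact saturated set $K_n$ with $K_n\subseteq W_n$ and $K_n\notin\mathcal{U}$.

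The heart of the argument is to assemble the $K_n$ into a filtered family of compact saturated sets whose intersection is exactly $K$. I define $N_n=\ua(K\cup\bigcup_{m\geq n}K_m)$ and verify three properties: (i) each $N_n$ lies in $\mk(X)$; (ii) $(N_n)$ is a decreasing family contained entirely in $\mk(X)\setminus\mathcal{U}$; and (iii) $\bigcap_n N_n=K$. Property (ii) is immediate, since $\mk(X)\setminus\mathcal{U}$ is Scott closed and hence upward closed under inclusion, while $K_n\subseteq N_n$. Property (iii) follows because any open $U\supseteq K$ contains some $W_{n_0}$, so $K_m\subseteq W_m\subseteq U$ for all $m\geq n_0$, whence $N_{n_0}\subseteq U$; intersecting over all such $U$ then recovers $K$ by saturation.

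The main obstacle is establishing (i), since an infinite union of compact sets is not typically compact. The key observation is that given any open cover of $K\cup\bigcup_{m\geq n}K_m$, one first uses the compactness of $K$ to extract a finite subcollection whose union $V$ is a neighborhood of $K$; the neighborhood base then provides $n_0\geq n$ with $W_{n_0}\subseteq V$, which uniformly handles all $K_m$ with $m\geq n_0$ (since $K_m\subseteq W_m\subseteq V$), while the finitely many remaining $K_m$ with $n\leq m<n_0$ are each covered by ordinary compactness. Once (i)--(iii) are in place, the Scott openness of $\mathcal{U}$ applied to the filtered family $(N_n)$ with Smyth supremum $K\in\mathcal{U}$ forces some $N_n\in\mathcal{U}$, contradicting (ii). This proves that the Scott topology is contained in the upper Vietoris topology and hence the two agree on $\mk(X)$.
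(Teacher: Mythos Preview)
The paper does not supply a proof of this theorem; it merely cites \cite{Brecht19}. Your argument is correct and self-contained. The first inclusion is standard (well-filteredness of a sober $X$ makes each $\Box U$ Scott open), and your treatment of the reverse inclusion is sound: the crucial step is the verification that each $N_n=\ua\bigl(K\cup\bigcup_{m\ge n}K_m\bigr)$ is compact, which you handle correctly by covering $K$ first, using the resulting open set to absorb all but finitely many of the $K_m$ via the decreasing countable neighborhood base $(W_n)$, and then invoking compactness of the remaining finitely many $K_m$. The identifications $\bigcap_n N_n=K$ and $N_n\notin\mathcal U$ (from $K_n\subseteq N_n$ and Scott-closedness of the complement) then yield the desired contradiction.

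One minor remark: your proof in fact only uses well-filteredness of $X$ together with second-countability, so it establishes the slightly stronger statement that the two topologies coincide on $\mk(X)$ for any second-countable well-filtered space. This is consistent with the paper's later Theorem~\ref{Smyth CI S=V}, since by Theorem~\ref{Smyth CII} second-countability passes to $P_S(X)$ and hence $P_S(X)$ is first-countable.
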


\begin{theorem}\label{min Compact countable is Smth CI} (\cite{XY20}) Let $X$ be a first-countable $T_0$ space. If $\mathrm{min}(K)$ is countable for any $K\in \mk (X)$, then
$P_S(X)$ is first-countable.
\end{theorem}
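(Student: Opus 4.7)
The plan is to construct, for each $K \in P_S(X)$, an explicit countable neighborhood base in the upper Vietoris topology. By Lemma \ref{COMPminimalset}, $K = \ua \mathrm{min}(K)$ with $\mathrm{min}(K)$ compact; combined with the hypothesis, $\mathrm{min}(K)$ is a countable compact subset of $X$, so I enumerate $\mathrm{min}(K) = \{x_n : n \in \mathbb{N}\}$ (allowing the sequence to be finite). Since $X$ is first-countable, fix for each $n$ a countable open neighborhood base $\{U_{n,m} : m \in \mathbb{N}\}$ at $x_n$.

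The candidate base at $K$ is
$$\mathcal{B}_K = \Big\{\Box(U_{n_1, m_1} \cup \cdots \cup U_{n_k, m_k}) : k, n_i, m_i \in \mathbb{N}, \ K \subseteq \bigcup_{i=1}^k U_{n_i, m_i}\Big\},$$
which is countable because it is indexed by finite tuples of natural numbers. Since $\{\Box U : U \in \mathcal O(X)\}$ is a base for $P_S(X)$, it suffices to show that every basic open $\Box U$ containing $K$ contains some member of $\mathcal{B}_K$.

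Given such a $\Box U$, we have $\mathrm{min}(K) \subseteq U$, so by first countability at each $x_n$ there is an index $m(n)$ with $x_n \in U_{n,m(n)} \subseteq U$. The family $\{U_{n, m(n)} : n \in \mathbb{N}\}$ is then an open cover of the \emph{compact} set $\mathrm{min}(K)$; extracting a finite subcover, I obtain $V := U_{n_1, m(n_1)} \cup \cdots \cup U_{n_k, m(n_k)} \subseteq U$ with $\mathrm{min}(K) \subseteq V$. Because $V$ is open and hence an upper set for the specialization order, $K = \ua \mathrm{min}(K) \subseteq V$. Therefore $\Box V \in \mathcal{B}_K$ and $K \in \Box V \subseteq \Box U$, as required.

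The main (and essentially only) subtle step is the appeal to compactness of $\mathrm{min}(K)$ — supplied by Lemma \ref{COMPminimalset} — to pass from the pointwise-chosen countable cover $\{U_{n, m(n)}\}$ to a finite subcover. Without it one would be forced to consider uncountably many choice functions $n \mapsto m(n)$, which would destroy countability of the base; the countability of $\mathrm{min}(K)$ alone is not enough. The remaining facts (open sets are upper sets, so any cover of $\mathrm{min}(K)$ automatically extends to $K$; the indexing set of $\mathcal{B}_K$ is countable) are routine bookkeeping.
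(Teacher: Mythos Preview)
The paper does not supply its own proof of this theorem; it merely records the result and cites \cite{XY20}. So there is no in-paper argument to compare against.

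Your proof is correct and is the natural one. The only point I would tighten is the wording around compactness: you remark that ``the countability of $\mathrm{min}(K)$ alone is not enough,'' which is true for your particular construction, but it is worth noting that both hypotheses on $\mathrm{min}(K)$ --- countability (to make the indexing set of $\mathcal B_K$ countable) and compactness (from Lemma~\ref{COMPminimalset}, to reduce the pointwise cover to a finite union) --- are genuinely used, and neither can be dropped in this argument. Otherwise the bookkeeping (open sets are upper sets in the specialization order, $\{\Box U:U\in\mathcal O(X)\}$ is a base, finite tuples of naturals are countable) is exactly as you say.
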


For the Alexandroff double circle $Y$ (see \cite[Example 3.1.26]{Engelking}), which is Hausdorff and first-countable, it is shown in \cite{XY20} that its Smyth power space $P_S(Y)$ is not first-countable.

For a general $T_0$ space $X$, Schalk \cite{Schalk} proved the following.

\begin{proposition}\label{LC sober V=S} (\cite{Schalk}) If $X$ is a locally compact sober space, then the upper Vietoris topology and the Scott topology on $\mk (X)$ coincide.
\end{proposition}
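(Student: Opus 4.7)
The plan is to show the two inclusions $\upsilon \subseteq \sigma(\mk(X))$ and $\sigma(\mk(X)) \subseteq \upsilon$ separately, where $\upsilon$ denotes the upper Vietoris topology. The first inclusion only needs sobriety (via well-filteredness and the Hofmann--Mislove theorem); the second is where local compactness must carry the weight.

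First I would verify the dcpo structure on $(\mk(X),\sqsubseteq)$: since $X$ is sober, it is well-filtered, so every filtered family $\mathcal{K}\subseteq \mk(X)$ has $\bigcap\mathcal{K}\in \mk(X)$ and this intersection is the supremum in the Smyth order. With this in hand, each subbasic set $\Box U$ is Scott open, because (i) it is an upper set in $\sqsubseteq$ since $K\subseteq U$ and $K'\subseteq K$ force $K'\subseteq U$, and (ii) if $\bigsqcup\mathcal{K}=\bigcap\mathcal{K}\subseteq U$ for a directed $\mathcal{K}$, the well-filteredness of $X$ yields some $K\in\mathcal{K}$ with $K\subseteq U$.

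For the reverse inclusion, take a Scott-open $\mathcal{U}\subseteq \mk(X)$ and a point $K\in\mathcal{U}$; I want to produce $U\in\mathcal{O}(X)$ with $K\in\Box U\subseteq \mathcal{U}$. The key lemma to establish (using local compactness) is that the family
\[
\mathcal{F}_K=\{Q\in \mk(X) : K\subseteq \ii\,Q\}
\]
is filtered and satisfies $\bigcap\mathcal{F}_K=K$. Filteredness follows by applying local compactness pointwise on $K$ and using compactness to extract a finite subcover, producing for any $Q_1,Q_2\in\mathcal{F}_K$ a $Q_3\in \mk(X)$ with $K\subseteq \ii\,Q_3\subseteq Q_3\subseteq \ii\,Q_1\cap \ii\,Q_2$. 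The equality $\bigcap\mathcal{F}_K=K$ uses that $K$ is saturated, hence the intersection of its open neighborhoods, together with the same local-compactness argument to shrink each open neighborhood to a compact saturated one whose interior still contains $K$.

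Once $\mathcal{F}_K$ is a directed family in the dcpo $(\mk(X),\sqsubseteq)$ with supremum $K\in\mathcal{U}$, Scott-openness of $\mathcal{U}$ gives some $Q\in\mathcal{F}_K\cap\mathcal{U}$. Setting $U=\ii\,Q$, we have $K\subseteq U$, and for any $L\in\Box U$ we get $L\subseteq \ii\,Q\subseteq Q$, i.e.\ $Q\sqsubseteq L$; since $\mathcal{U}$ is an upper set, $L\in\mathcal{U}$, so $\Box U\subseteq \mathcal{U}$ as required. The main obstacle is the construction of $\mathcal{F}_K$ and the verification of its two properties: these are precisely the places where local compactness (for filteredness and for the identity $\bigcap\mathcal{F}_K=K$) is essential, and where sobriety reappears, since one uses that saturated sets in a sober space are determined by their open neighborhoods and that $\mk(X)$ is a genuine dcpo under $\sqsubseteq$.
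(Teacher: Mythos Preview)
Your argument is correct and follows the standard route to this result. Note, however, that the paper does not supply its own proof of this proposition: it is simply quoted from Schalk's thesis, so there is no in-paper argument to compare against. One minor remark on your final sentence: the fact that a saturated set equals the intersection of its open neighborhoods is the \emph{definition} of saturatedness and holds in every $T_0$ space, not only in sober ones; sobriety enters your proof solely through well-filteredness, which guarantees that $(\mk(X),\sqsubseteq)$ is a dcpo with suprema given by intersection and that each $\Box U$ is Scott open.
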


By Theorem \ref{core compact WF is sober} and Proposition \ref{LC sober V=S}, we have the following conclusion.

\begin{corollary}\label{LC WF domain V=S}  If $X$ is a core compact well-filtered space, then the upper Vietoris topology and the Scott topology on $\mk (X)$ coincide.
\end{corollary}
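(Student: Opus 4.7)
The plan is to chain together two earlier results in the paper, since the corollary is essentially an immediate consequence once one recognises that the hypothesis ``core compact and well-filtered'' is a disguised form of ``locally compact and sober''. Concretely, I would first invoke Theorem~\ref{core compact WF is sober} (of Lawson--Xi and Xu--Shen--Xi--Zhao) to upgrade well-filteredness to sobriety: any core compact well-filtered $T_0$ space $X$ is sober. Then I would use the equivalence of clauses $(3)$ and $(1)$ in Theorem~\ref{SoberLC=CoreCNew} (which, for sober spaces, collapses core compactness and local compactness) to conclude that $X$ is in fact locally compact and sober.

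With local compactness and sobriety of $X$ established, Proposition~\ref{LC sober V=S} of Schalk applies directly and yields that the upper Vietoris topology and the Scott topology on $\mk(X)$ coincide, which is exactly what is to be proved.

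There is no real obstacle here: the corollary is a transparent composition of two prior theorems. The only thing worth being careful about is citing the right implication of Theorem~\ref{SoberLC=CoreCNew}, namely $(4)\Rightarrow(1)$ (core compact well-filtered $\Rightarrow$ locally compact sober), which itself bundles Theorem~\ref{core compact WF is sober} together with the classical fact that a sober space is core compact iff it is locally compact. Invoking that single equivalence makes the argument a one-line reduction to Proposition~\ref{LC sober V=S}.
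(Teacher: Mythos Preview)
Your proposal is correct and follows essentially the same route as the paper: the corollary is stated immediately after noting that it follows from Theorem~\ref{core compact WF is sober} and Proposition~\ref{LC sober V=S}, with the passage from core compactness to local compactness under sobriety (your invocation of Theorem~\ref{SoberLC=CoreCNew}) left implicit there. You have simply made that intermediate step explicit, which is fine.
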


\begin{theorem}\label{Smyth CI S=V}  (\cite{XY20}) If $X$ is a well-filtered space and $P_S(X)$ is first-countable, then the upper Vietoris topology agrees with and the Scott topology on $\mk (X)$.
\end{theorem}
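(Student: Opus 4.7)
The plan is to establish the two inclusions of topologies on $\mk(X)$ separately. The inclusion of the upper Vietoris topology in the Scott topology is immediate from Theorem \ref{Smythwf}: since $X$ is well-filtered, $P_S(X)$ is a $d$-space, and the topology of a $d$-space is always contained in the Scott topology of its specialization order (here the Smyth order). For the reverse inclusion, fix a Scott open $\mathcal U$ of $\mk(X)$ and $K\in\mathcal U$; the goal is to find an open $V\subseteq X$ with $K\subseteq V$ and $\Box V\subseteq\mathcal U$.

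First I would exploit first-countability of $P_S(X)$ at $K$ to extract a decreasing countable local base of \emph{basic} neighborhoods $\{\Box V_n\}_{n\in\mathbb N}$ (by taking a decreasing local base, shrinking each term to a basic open containing $K$, and then replacing each $V_n$ by the intersection $V_1\cap\cdots\cap V_n$). A standard test on the supercompact sets $\ua x$ ($x\in V_n$) shows that every open $U\supseteq K$ must satisfy $V_n\subseteq U$ for some $n$, i.e.\ $\{V_n\}$ is cofinal in $\Phi(K)$. Since $K$ is saturated and $X$ is $T_0$, $K=\bigcap\Phi(K)$, so
\[
\bigcap_{n\in\mathbb N}V_n=K.
\]

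Now suppose for contradiction that no basic open $\Box V\ni K$ is contained in $\mathcal U$. Then for each $n$ pick $K_n\in\Box V_n\setminus\mathcal U$, and set
\[
J_n=K\cup\bigcup_{i\geq n}K_i.
\]
I would verify that $J_n\in\mk(X)$: saturation is immediate, and compactness follows by first covering $K$ by a finite subcollection with union $W$, using cofinality to find $m$ with $V_m\subseteq W$ (so $K_i\subseteq V_i\subseteq W$ for every $i\geq\max(n,m)$), and covering the remaining finitely many $K_i$ individually. The chain $\{J_n\}$ is descending, hence directed in the Smyth order with supremum $\bigcap_nJ_n$. Because each $K_i\subseteq V_i$ and $\bigcap_nV_n=K$, every element of $\limsup_iK_i$ lies in $\bigcap_nV_n=K$; consequently $\bigcap_nJ_n=K$.

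Scott openness of $\mathcal U$ and $K=\bigcap_nJ_n\in\mathcal U$ now yield some $J_n\in\mathcal U$. But $K_n\subseteq J_n$ means $J_n\sqsubseteq K_n$ in the Smyth order, and upward closure of $\mathcal U$ forces $K_n\in\mathcal U$, contradicting the choice of $K_n$. The main obstacle is verifying the compactness of $J_n$, which is precisely where the convergence $K_n\to K$ in $P_S(X)$ (supplied by first-countability) is used; ensuring $\bigcap_nJ_n=K$ in turn reduces to the cofinality identity $\bigcap_nV_n=K$.
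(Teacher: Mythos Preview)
Your argument is correct. The paper itself does not supply a proof of this theorem (it is quoted from \cite{XY20}), so there is no in-text proof to compare against; but your route is the natural one and each step checks out. In particular: the extraction of a decreasing basic local base $\{\Box V_n\}$ is legitimate because $\Box(V\cap W)=\Box V\cap\Box W$; the cofinality claim $V_n\subseteq U$ whenever $\Box V_n\subseteq\Box U$ follows exactly as you indicate via the supercompact sets $\ua x$; the compactness of $J_n=K\cup\bigcup_{i\ge n}K_i$ uses that cofinality to push all but finitely many $K_i$ into the finite cover of $K$; and the identification $\bigcap_n J_n=K$ follows from $\bigcap_n V_n=K$ together with the monotonicity of the $V_n$. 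Finally, that the Smyth supremum of the chain $\{J_n\}$ equals the intersection $\bigcap_n J_n$ is available here because $X$ is well-filtered (so $P_S(X)$ is a $d$-space by Theorem~\ref{Smythwf}, hence filtered intersections in $\mk(X)$ are compact saturated and realise the supremum). One cosmetic remark: your phrase ``every element of $\limsup_i K_i$ lies in $\bigcap_n V_n$'' is better written out explicitly as the short argument that any $x\notin K$ lying in every $J_n$ must lie in some $K_{i_n}$ with $i_n\ge n$, hence in $V_{i_n}\subseteq V_n$, for every $n$.
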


By Theorem \ref{min Compact countable is Smth CI} and Theorem \ref{Smyth CI S=V}, we get the following.

\begin{corollary}\label{Smyth compact count S=V}  (\cite{XY20}) If $X$ is a well-filtered space and $\mathrm{min}(K)$ is countable for any $K\in \mk (X)$, then the upper Vietoris topology agrees with and the Scott topology on $\mk (X)$.
\end{corollary}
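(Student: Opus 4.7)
The plan is to derive this corollary as a one-line concatenation of the two immediately preceding results, Theorem \ref{min Compact countable is Smth CI} and Theorem \ref{Smyth CI S=V}. Reading the hypotheses as intended (so that the first-countability of $X$ needed to feed Theorem \ref{min Compact countable is Smth CI} is in force), the argument runs in two self-contained steps.

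First, I would invoke Theorem \ref{min Compact countable is Smth CI}: since $\mathrm{min}(K)$ is countable for every $K\in\mk(X)$, the Smyth power space $P_S(X)$ is first-countable. Concretely, that theorem supplies, for each $K\in\mk(X)$, a countable neighborhood base in $P_S(X)$ consisting of basic open sets of the form $\Box U$, obtained by enumerating $\mathrm{min}(K)=\{k_n:n\in\mn\}$ and combining countable local bases at each $k_n$. Second, I would invoke Theorem \ref{Smyth CI S=V}: $X$ is well-filtered by assumption and $P_S(X)$ has just been shown first-countable, so the upper Vietoris topology and the Scott topology on $\mk(X)$ coincide. Chaining these two yields exactly the conclusion of the corollary.

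At the level of the corollary itself, there is essentially no obstacle: all of the conceptual work has already been absorbed into Theorems \ref{min Compact countable is Smth CI} and \ref{Smyth CI S=V}. The only thing worth checking is that the two hypothesis sets mesh cleanly—namely that the first-countability of $P_S(X)$ produced by Step 1 is the same first-countability demanded by Step 2, which is immediate from the definitions. Hence the proof is simply the formal composition ``$(X\text{ first-countable},\ \mathrm{min}(K)\text{ countable})\Rightarrow P_S(X)\text{ first-countable}\Rightarrow \text{upper Vietoris}=\text{Scott on }\mk(X)$,'' using well-filteredness of $X$ at the second arrow.
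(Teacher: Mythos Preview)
Your proposal is correct and matches the paper's argument exactly: the paper derives the corollary simply ``By Theorem \ref{min Compact countable is Smth CI} and Theorem \ref{Smyth CI S=V},'' which is precisely your two-step composition. Your observation that first-countability of $X$ is tacitly required (to feed Theorem \ref{min Compact countable is Smth CI}) is well taken; the corollary as stated omits this hypothesis, but the intended chain of implications is just as you describe.
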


So naturally one asks the following question.

\begin{question}\label{CI WF U V topol=Scott topol question} (\cite{XY20})  For a first-countable well-filtered space $X$, does the upper Vietoris topology and the Scott topology on $\mk (X)$ coincide?
\end{question}

A topological space $X$ is said to be \emph{consonant} if for every $\mathcal F\in \sigma (\mathcal O(X))$, there is a family $\{K_i : i\in I\}\subseteq \mk (X)$ with $\mathcal F=\bigcup_{K\in \mathcal A} \Phi (K)$ (see, e.g., \cite{Bou99, CW98, DGL95,
 Jean-2013, NS96}). The consonance is an important topological property (see \cite{AC99, Bou99, Bou96, DGL95, NS96}). In \cite{DGL95} it is proved that every C\'ech-complete space --- hence, in particular, every completely metrizable space --- is consonant. Bouziad \cite{Bou96} (see also \cite{CW98}) showed that the space of rationals with the subspace topology inherited from the reals is not
consonant. It was shown in \cite{dBSS16} that quasi-Polish spaces are consonant, and it is known that a
separable co-analytic metrizable space is consonant if and only if it is Polish (see \cite{Bou99}). Recently, in \cite{LJ20} it is shown that a $T_0$ space $X$ is locally compact iff $X$ is core compact and consonant.

For a sober space $X$, by the Hoffmann-Mislove Theorem (Theorem \ref{Hofmann-Mislove theorem}), we know that $X$ is consonant
iff the Scott topology on $\mathcal O(X)$ has a basis consisting of Scott-open filters.

\begin{theorem}\label{consonant campact-open=Isbell} (\cite{Jean-2013})
For a $T_0$ space $X$, the following statements are equivalent:
\begin{enumerate}[\rm (1)]
\item $X$ is consonant.
\item The compact open topology coincides with Isbell topology on
$\mathbf{Top}_o(X, S)$, where $S=\Sigma 2$ is the Sierpinski space.
\item The compact open topology coincides with Isbell topology on
$\mathbf{Top}_o(X, Y)$ for every $T_0$ space $Y$.
\end{enumerate}
\end{theorem}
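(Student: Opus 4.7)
The plan is to exploit the canonical bijection $\mathbf{Top}_0(X, S) \cong \mathcal{O}(X)$ sending $f$ to $f^{-1}(\{1\})$, and to translate both function-space topologies into topologies on $\mathcal{O}(X)$. Under this identification, the Isbell subbasic set $[\mathcal{H}, \{1\}]=\{f: f^{-1}(\{1\})\in \mathcal{H}\}$ becomes $\mathcal{H}$ itself, so the Isbell topology on $\mathbf{Top}_0(X,S)$ matches the Scott topology $\sigma(\mathcal{O}(X))$. The compact-open subbasic set $[K,\{1\}]=\{f: K\subseteq f^{-1}(\{1\})\}$ becomes $\Phi(K)$, so the compact-open topology on $\mathbf{Top}_0(X,S)$ corresponds to the topology $\tau_{\mathsf{CO}}$ on $\mathcal{O}(X)$ generated by $\{\Phi(K): K\in \mk(X)\}$.

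Next I would verify that each $\Phi(K)$ lies in $\sigma(\mathcal{O}(X))$: it is clearly an upper set under inclusion, and if $K\subseteq \bigcup_{i} U_i$ for a directed family $\{U_i\}\subseteq \mathcal{O}(X)$, compactness of $K$ yields $K\subseteq U_{i_0}$ for some $i_0$, i.e., $U_{i_0}\in \Phi(K)$. Hence $\tau_{\mathsf{CO}}\subseteq \sigma(\mathcal{O}(X))$, so the compact-open topology is always coarser than the Isbell topology on $\mathbf{Top}_0(X,S)$. The two topologies coincide precisely when every Scott-open subset of $\mathcal{O}(X)$ is a union of sets of the form $\Phi(K)$, which is exactly the definition of consonance; this settles (1) $\Leftrightarrow$ (2).

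For (1) $\Rightarrow$ (3), fix an arbitrary $T_0$ space $Y$. Since $\Phi(K)$ is Scott-open, every compact-open subbasic set $[K,V]=\{f: f^{-1}(V)\in \Phi(K)\}$ is already Isbell-open, so only the reverse inclusion requires consonance. Given an Isbell subbasic set $[\mathcal{H},V]$ with $\mathcal{H}\in \sigma(\mathcal{O}(X))$ and $V\in \mathcal{O}(Y)$, consonance supplies $\{K_i: i\in I\}\subseteq \mk(X)$ with $\mathcal{H}=\bigcup_{i\in I}\Phi(K_i)$, and then
\[
f\in [\mathcal{H},V] \iff f^{-1}(V)\in \mathcal{H} \iff K_i\subseteq f^{-1}(V) \text{ for some } i\in I \iff f\in \bigcup_{i\in I}[K_i, V],
\]
exhibiting $[\mathcal{H},V]$ as a union of compact-open subbasic sets. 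Finally, (3) $\Rightarrow$ (2) is immediate by specialising to $Y=S$. The whole argument is essentially unwinding the definitions through the Sierpinski identification; the only nontrivial input is the Scott-openness of $\Phi(K)$, which is a direct consequence of the compactness of $K$, so I do not anticipate any real obstacle.
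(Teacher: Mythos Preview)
The paper does not supply its own proof of this theorem; it is simply quoted from \cite{Jean-2013} as a known characterisation of consonance. Your argument is correct and is in fact the standard one found in that reference: the Sierpinski identification $\mathbf{Top}_0(X,S)\cong\mathcal{O}(X)$ turns the Isbell topology into the Scott topology and the compact-open topology into the one generated by the $\Phi(K)$, so that (1)$\Leftrightarrow$(2) is a direct restatement of the paper's definition of consonance, and (1)$\Rightarrow$(3) follows by writing any Isbell subbasic set $[\mathcal H,V]$ as $\bigcup_i[K_i,V]$.

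Two minor points worth tightening. First, in the paper consonance is phrased with $K_i\in\mathsf K(X)$ (compact \emph{saturated}) rather than arbitrary compact sets; you implicitly use this when matching the compact-open subbase, but since $\Phi(K)=\Phi(\mathord{\uparrow}K)$ for any compact $K$ the distinction is harmless---it would do no harm to say so explicitly. Second, when you assert that the Isbell topology on $\mathbf{Top}_0(X,S)$ becomes exactly $\sigma(\mathcal O(X))$, you should note that the trivial subbasic sets $[\mathcal H,\emptyset]$ and $[\mathcal H,S]$ contribute nothing new, so that the subbase really is $\{\mathcal H:\mathcal H\in\sigma(\mathcal O(X))\}$, which already is a topology. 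These are cosmetic; the substance of your proof is complete.
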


Recently, Brecht \cite{Brecht19} proved that for a $T_0$ space $X$, the consonance of $X$ is equivalent to the commutativity of the upper and lower power spaces in the sense that $P_H(P_S(X))\cong P_S(P_H(X))$ under a naturally defined homeomorphism.

In \cite{Brecht19} Brecht and Kawai posed the following two questions.

\begin{question}\label{Smyth consonant question} (\cite{Brecht19}) For a consonant $T_0$ space $X$, is $P_S(X)$ also consonant?
\end{question}

\begin{question}\label{Hoare consonant question} (\cite{Brecht19}) For a consonant $T_0$ space $X$, is $P_H(X)$ also consonant?
\end{question}

In \cite{LJ20}, Lyu and Jia gave a partial answer to Question \ref{Smyth consonant question}.

\begin{proposition}\label{Smyth CI consonant} (\cite{LJ20}) If a $T_0$ space $X$ is consonant and $\Sigma \mathcal O(X)$ is first-countable, then $P_S(X)$ is consonant.
\end{proposition}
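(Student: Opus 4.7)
My plan is to take an arbitrary $\mathcal{F}^*\in\sigma(\mathcal{O}(P_S(X)))$ and $\mathcal{V}^*\in\mathcal{F}^*$ and produce $\mathcal{K}\in\mk(P_S(X))$ with $\mathcal{K}\subseteq\mathcal{V}^*$ and $\Phi(\mathcal{K})\subseteq\mathcal{F}^*$. Since $\{\Box U:U\in\mathcal{O}(X)\}$ is a base for $P_S(X)$ and its finite unions form a directed family in $\mathcal{O}(P_S(X))$ with join $\mathcal{V}^*$, Scott-openness of $\mathcal{F}^*$ immediately lets me replace $\mathcal{V}^*$ by some finite union $\bigcup_{i=1}^{n}\Box W_i\in\mathcal{F}^*$ that is still contained in $\mathcal{V}^*$.

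The bridge to consonance of $X$ is the map
\[
\phi_n\colon\mathcal{O}(X)^n\longrightarrow\mathcal{O}(P_S(X)),\qquad (U_1,\ldots,U_n)\longmapsto\bigcup_{i=1}^{n}\Box U_i,
\]
whose Scott-continuity follows from $\Box(\bigvee^{\uparrow}D)=\bigvee^{\uparrow}\Box(D)$ by compactness of members of $\mk(X)$. Consequently $\mathcal{G}_n:=\phi_n^{-1}(\mathcal{F}^*)$ is Scott-open in $\mathcal{O}(X)^n$ and contains $(W_1,\ldots,W_n)$. The heart of the argument will be to show that $\mathcal{G}_n$ contains a rectangular neighborhood $\prod_{i=1}^n\Phi(K_i)$ with $K_i\in\mk(X)$ and $K_i\subseteq W_i$; once such $K_i$ are obtained, the set $\mathcal{K}:=\bigcup_{i=1}^n\Box K_i$ will resolve the problem.

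To find the $K_i$ I combine the two hypotheses as follows. First-countability of $\Sigma\mathcal{O}(X)$ at each $W_i$ yields a descending Scott-open neighborhood base $\mathcal{U}_1^{(i)}\supseteq\mathcal{U}_2^{(i)}\supseteq\cdots$, and consonance then places inside each $\mathcal{U}_m^{(i)}$ a Scott-open filter $\Phi(L_m^{(i)})$ with $L_m^{(i)}\in\mk(X)$ and $L_m^{(i)}\subseteq W_i$; replacing $L_m^{(i)}$ by $L_1^{(i)}\cup\cdots\cup L_m^{(i)}$, I arrange each sequence to be increasing in $m$. I then claim that some $m$ satisfies $\prod_{i=1}^n\Phi(L_m^{(i)})\subseteq\mathcal{G}_n$. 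Supposing the contrary, for each $m$ I pick a tuple $(V_m^{(i)})_i\in\prod_i\Phi(L_m^{(i)})\setminus\mathcal{G}_n$ and consider the increasing envelopes $\tilde V_m^{(i)}:=\bigcup_{k\le m}V_k^{(i)}$, which still lie in $\mathcal{U}_m^{(i)}$; hence $\tilde V_m^{(i)}\to W_i$ in $\Sigma\mathcal{O}(X)$, and the directed join $\tilde V^{(i)}:=\bigvee_m\tilde V_m^{(i)}$ satisfies $\tilde V^{(i)}\supseteq W_i$. Applying Scott-continuity of $\phi_n$ to the directed family $(\tilde V_m^{(1)},\ldots,\tilde V_m^{(n)})_m$ gives $\bigvee_m\phi_n(\tilde V_m^{(1)},\ldots,\tilde V_m^{(n)})=\phi_n(\tilde V^{(1)},\ldots,\tilde V^{(n)})\supseteq\mathcal{V}^*\in\mathcal{F}^*$, so by Scott-openness of $\mathcal{F}^*$ one has $\phi_n(\tilde V_m^{(1)},\ldots,\tilde V_m^{(n)})\in\mathcal{F}^*$ eventually; a diagonal refinement, driven by the descending character of the bases $\{\mathcal{U}_m^{(i)}\}$, then forces the original tuples $(V_m^{(i)})_i$ to lie in $\mathcal{G}_n$ from some index on, contradicting the choice.

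Finally I set $K_i:=L_m^{(i)}$ for such an $m$ and $\mathcal{K}:=\bigcup_{i=1}^n\Box K_i$. Each $\Box K_i=\{K'\in\mk(X):K'\subseteq K_i\}$ is compact saturated in $P_S(X)$, so $\mathcal{K}\in\mk(P_S(X))$; containment $\mathcal{K}\subseteq\mathcal{V}^*$ is immediate from $K_i\subseteq W_i$. For $\Phi(\mathcal{K})\subseteq\mathcal{F}^*$, given an open $\mathcal{V}'\supseteq\mathcal{K}$ in $P_S(X)$ each $K_i\in\mathcal{V}'$, so I extract basic opens $\Box U_i\subseteq\mathcal{V}'$ with $K_i\subseteq U_i$; then $(U_1,\ldots,U_n)\in\prod_i\Phi(K_i)\subseteq\mathcal{G}_n$ yields $\bigcup_i\Box U_i\in\mathcal{F}^*$, and upward-closure of $\mathcal{F}^*$ places $\mathcal{V}'$ in $\mathcal{F}^*$. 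The main obstacle is the diagonalization in paragraph three: Scott-continuity of $\phi_n$ is naturally with respect to $\sigma(\mathcal{O}(X)^n)$, which is a priori strictly finer than the product topology $\sigma(\mathcal{O}(X))^n$ in which the coordinate-wise convergences $\tilde V_m^{(i)}\to W_i$ live, and first-countability of $\Sigma\mathcal{O}(X)$ is precisely the hypothesis that bridges the two via the countable increasing envelopes.
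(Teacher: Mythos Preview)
The paper does not supply a proof of this proposition; it merely cites the result from \cite{LJ20}. So there is no in-paper argument to compare against, and I can only assess your proposal on its own merits.

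Your overall architecture is sound: the reduction to a basic open $\bigcup_{i=1}^n\Box W_i\in\mathcal{F}^*$, the Scott-continuity of $\phi_n$, and the endgame (showing $\mathcal{K}=\bigcup_i\Box K_i\in\mk(P_S(X))$ and $\Phi(\mathcal{K})\subseteq\mathcal{F}^*$ once $\prod_i\Phi(K_i)\subseteq\mathcal{G}_n$ is in hand) are all correct. The genuine gap is exactly where you yourself flag the ``main obstacle'': the diagonal refinement in paragraph three does not deliver the contradiction. You correctly deduce $(\tilde V_m^{(i)})_i\in\mathcal{G}_n$ for all large $m$, but since $\tilde V_m^{(i)}\supseteq V_m^{(i)}$ and $\mathcal{G}_n$ is an \emph{upper} set, membership of the larger tuple says nothing about the smaller one. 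There is no mechanism in your construction forcing the witnesses $(V_m^{(i)})_i$ --- chosen arbitrarily in $\prod_i\Phi(L_m^{(i)})\setminus\mathcal{G}_n$ --- to become increasing in $m$ while remaining outside $\mathcal{G}_n$, and the phrase ``a diagonal refinement, driven by the descending character of the bases'' does not name an actual step that would achieve this.

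What your argument really needs is that the Scott-open set $\mathcal{G}_n\subseteq\mathcal{O}(X)^n$ contains a basic open of the \emph{product} topology $\sigma(\mathcal{O}(X))^n$ around $(W_1,\dots,W_n)$ --- in other words, that first-countability of $\Sigma\mathcal{O}(X)$ forces $\sigma(\mathcal{O}(X)^n)=\sigma(\mathcal{O}(X))^n$ (at least locally). Your envelope construction only produces a single increasing sequence of tuples eventually inside $\mathcal{G}_n$; it does not produce an entire rectangular neighbourhood inside $\mathcal{G}_n$. You must either prove this product identity honestly (it is not automatic and needs a real argument), or find a different route to the $K_i$ --- e.g.\ a coordinate-by-coordinate slicing, where one also has to control the universal quantifier over the previously chosen $\Phi(K_j)$'s, which is the same difficulty in another guise.
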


\section{$d$-reflections and well-filtered reflections of $T_0$ spaces}

\begin{definition}\label{WFtion} (\cite{Keimel-Lawson, xu20})
	Let $\mathbf{K}$ be a full subcategory of $\mathbf{Top}_0$ containing $\mathbf{Sob}$ and $X$ a $T_0$ space. A $\mathbf{K}$-\emph{reflection} of $X$ is a pair $\langle \widetilde{X}, \mu\rangle$ consisting of a $\mathbf{K}$-space $\widetilde{X}$ and a continuous mapping $\mu :X\longrightarrow \widetilde{X}$ satisfying that for any continuous mapping $f: X\longrightarrow Y$ to a $\mathbf{K}$-space, there exists a unique continuous mapping $f^* : \widetilde{X}\longrightarrow Y$ such that $f^*\circ\mu=f$, that is, the following diagram commutes.\\
\begin{equation*}
	\xymatrix{
		X \ar[dr]_-{f} \ar[r]^-{\mu}
		&\widetilde{X}\ar@{.>}[d]^-{f^*}\\
		&Y}
	\end{equation*}

\end{definition}

By a standard argument, $\mathbf{K}$-reflections, if they exist, are unique up to homeomorphism. We shall use $X^k$ to denote the space of the $\mathbf{K}$-reflection of $X$ if it exists. The space of $\mathbf{Sob}$-reflection of $X$ is the sobrification $X^s$ of $X$. The space of $\mathbf{Top}_d$-reflection (resp., $\mathbf{Top}_w$-reflection) of $X$ is denoted by $X^d$ (resp., $X^w$).

It is well-known that $\mathbf{Sob}$ is reflective in $\mathbf{Top}_0$ (see \cite{redbook, Jean-2013}). Using $d$-closures, Wyler \cite{Wyler} proved that $\mathbf{Top}_d$ is reflective in $\mathbf{Top}_0$. Later, Ershov \cite{Ershov_1999} showed that the $d$-completion (i.e., the $d$-reflection) of $X$ can be obtained by adding the closure of directed sets onto $X$ (and then repeating this process by transfinite induction). A more direct way to $d$-completions of $T_0$ spaces was given in \cite{ZhangLi}. In \cite{Keimel-Lawson}, using Wyler's method, Keimel and Lawson proved that for a full subcategory $\mathbf{K}$ of $\mathbf{Top}_0$ containing $\mathbf{Sob}$, if $\mathbf{K}$ has certain properties, then $\mathbf{K}$ is reflective in $\mathbf{Top}_0$. They showed that $\mathbf{Top}_d$ and some other categories have such properties.

For quite a long time, it is not known whether $\mathbf{Top}_w$ is reflective in $\mathbf{Top}_0$. Recently, following Keimel and Lawson's method, this problem is answered positively in \cite{wu-xi-xu-zhao-19}. Following Ershov's method, a more constructive well-filtered reflectors of $T_0$ spaces are presented in \cite{Shenchon}. In \cite{Liu-Li-Wu-2020}, following closely to Ershov's method in \cite{Ershov_2017}, another
way to construct a well-filterification of a $T_0$ space is given. In \cite {xu20}, for a full subcategory $\mathbf{K}$ of $\mathbf{Top}_0$ containing $\mathbf{Sob}$, the first author has provided a direct approach to $\mathbf{K}$-reflections of $T_0$ spaces.

\begin{theorem}\label{K-reflection}  (\cite{xu20})
	Let $\mathbf{K}$ be a full subcategory of $\mathbf{Top}_0$ containing $\mathbf{Sob}$ and Let $X$ a $T_0$ space. If $P_H(\mathbf{K}(X))$ is a $\mathbf{K}$-space, then the pair $\langle X^k=P_H(\mathbf{K}(X)), \eta_X^k\rangle$, where $\eta_X^k :X\longrightarrow X^k$, $x\mapsto\overline{\{x\}}$, is the $\mathbf{K}$-reflection of $X$.
\end{theorem}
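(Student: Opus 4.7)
The plan is to verify the universal property directly. Given a continuous map $f\colon X\to Y$ with $Y$ a $\mathbf{K}$-space, I will construct a unique continuous $f^*\colon X^k\to Y$ satisfying $f^*\circ\eta_X^k=f$. First, note that $\mathcal{S}_c(X)\subseteq\mathbf{K}(X)$ by Lemma \ref{sobd=irr}, so $\eta_X^k$ is well defined; it is continuous because $(\eta_X^k)^{-1}(\Diamond U)=U$ for every $U\in\mathcal{O}(X)$. The hypothesis that $P_H(\mathbf{K}(X))$ is itself a $\mathbf{K}$-space is needed precisely because the codomain of a $\mathbf{K}$-reflector must lie in $\mathbf{K}$; it is not used in the construction below.

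To build $f^*$, I would use the defining property of $\mathbf{K}$-determined sets: for each $A\in\mathbf{K}(X)$ there is a unique $y_A\in Y$ with $\overline{f(A)}=\overline{\{y_A\}}$, and I set $f^*(A)=y_A$. The identity $f^*\circ\eta_X^k=f$ then reduces to the computation $\overline{f(\overline{\{x\}})}=\overline{\{f(x)\}}$, which follows from the continuity of $f$ together with the $T_0$ separation of $Y$. For continuity of $f^*$, given $V\in\mathcal{O}(Y)$ the equalities
\[
(f^*)^{-1}(V)=\{A\in\mathbf{K}(X):y_A\in V\}=\{A\in\mathbf{K}(X):\overline{f(A)}\cap V\neq\emptyset\}=\Diamond\,f^{-1}(V)
\]
exhibit $(f^*)^{-1}(V)$ as a subbasic open of $X^k$; the middle step uses that $V$ is an upper set and $\overline{\{y_A\}}=\da y_A$.

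For uniqueness, the pivotal observation is that $\overline{\eta_X^k(A)}=\overline{\{A\}}$ in $X^k$ for every $A\in\mathbf{K}(X)$: if $A\in\Diamond U$ then picking $x\in A\cap U$ yields $\eta_X^k(x)\in\Diamond U\cap\eta_X^k(A)$, showing $A\in\overline{\eta_X^k(A)}$; meanwhile $\eta_X^k(A)\subseteq\overline{\{A\}}=\{B\in\mathbf{K}(X):B\subseteq A\}$ is immediate since $A$ is closed. Now let $g\colon X^k\to Y$ be any continuous map with $g\circ\eta_X^k=f$. Continuity of $g$ gives $g(\overline{\{A\}})\subseteq\overline{\{g(A)\}}$, so $f(A)=g(\eta_X^k(A))\subseteq\overline{\{g(A)\}}$; on the other hand, $A\in\overline{\eta_X^k(A)}$ forces $g(A)\in\overline{f(A)}$. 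Combining these inclusions yields $\overline{\{g(A)\}}=\overline{f(A)}=\overline{\{y_A\}}$, and the uniqueness clause of the $\mathbf{K}$-determined definition then gives $g(A)=y_A=f^*(A)$. I expect no deep obstacle; the only place requiring care is to establish the identity $\overline{\eta_X^k(A)}=\overline{\{A\}}$ before applying continuity of candidate extensions, and to invoke the \emph{uniqueness} (not merely the existence) of $y_A$ when closing out the argument.
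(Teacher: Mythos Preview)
Your argument is correct and is precisely the standard direct verification of the universal property. Note, however, that the present paper is a survey and does not supply its own proof of this theorem; it merely quotes the statement from \cite{xu20}, so there is no in-paper proof to compare against. Your construction of $f^*$ via the defining property of $\mathbf{K}$-determined sets, the continuity computation $(f^*)^{-1}(V)=\Diamond f^{-1}(V)$, and the uniqueness step through the density-type identity $\overline{\eta_X^k(A)}=\overline{\{A\}}$ together constitute exactly the approach taken in the cited source.
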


\begin{definition}\label{K-adequate} (\cite{xu20}) Let $\mathbf{K}$ be a full subcategory of $\mathbf{Top}_0$ containing $\mathbf{Sob}$. $\mathbf{K}$ is called \emph{adequate} if for any $T_0$ space $X$, $P_H(\mathbf{K}(X))$ is a $\mathbf{K}$-space.
\end{definition}

\begin{corollary}\label{K-adequate reflective} (\cite{xu20})
	If $\mathbf{K}$ is adequate, then $\mathbf{K}$ is reflective in $\mathbf{Top}_0$.
\end{corollary}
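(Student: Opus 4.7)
The plan is to observe that this corollary is essentially an immediate consequence of Theorem \ref{K-reflection} together with the adequacy hypothesis, and that no genuinely new work is required beyond unpacking definitions.

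First I would recall the categorical criterion: a full subcategory $\mathbf{K}$ of $\mathbf{Top}_0$ is reflective if and only if every object $X$ of $\mathbf{Top}_0$ admits a $\mathbf{K}$-reflection in the sense of Definition \ref{WFtion}, i.e., a universal arrow $\mu_X : X \longrightarrow \widetilde{X}$ from $X$ to the inclusion functor. Thus it suffices to exhibit, for each $T_0$ space $X$, a $\mathbf{K}$-reflection of $X$.

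Next, I would invoke adequacy. Fix an arbitrary $T_0$ space $X$. Since $\mathbf{K}$ is adequate, by Definition \ref{K-adequate} the space $P_H(\mathbf{K}(X))$ is a $\mathbf{K}$-space. This is precisely the hypothesis required by Theorem \ref{K-reflection}. Applying that theorem, the pair
\[
\langle X^k = P_H(\mathbf{K}(X)),\; \eta_X^k \rangle, \qquad \eta_X^k(x) = \overline{\{x\}},
\]
is the $\mathbf{K}$-reflection of $X$. Hence every object of $\mathbf{Top}_0$ has a $\mathbf{K}$-reflection, which by the standard categorical criterion recalled above means that $\mathbf{K}$ is reflective in $\mathbf{Top}_0$. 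If desired, one may make the left adjoint explicit by sending a continuous map $f : X \longrightarrow Y$ between $T_0$ spaces to the unique continuous extension $(\eta_Y^k \circ f)^* : X^k \longrightarrow Y^k$ provided by the universal property; functoriality and naturality of $\eta^k$ then follow in the usual way from uniqueness.

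There is no real obstacle here: the substantive content has already been absorbed into Theorem \ref{K-reflection}, and the corollary is a direct translation of the pointwise existence of reflections into the language of reflective subcategories. The only thing to be slightly careful about is to note that the construction $X \mapsto X^k$ is natural in $X$, but this is forced by the universal property and requires only the standard one-line verification using uniqueness of $f^*$.
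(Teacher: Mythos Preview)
Your proposal is correct and matches the paper's approach exactly: the paper gives no separate proof, treating the corollary as an immediate consequence of Theorem~\ref{K-reflection} together with Definition~\ref{K-adequate}, which is precisely what you do.
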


\begin{theorem}\label{K-L reflective} (\cite{xu20}) Every Keimel-Lawson category is adequate, and hence reflective in $\mathbf{Top}_0$.
\end{theorem}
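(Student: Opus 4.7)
The plan is to view $\mathbf{K}(X)$ as a subspace of the sobrification $X^s$ and to exhibit it as an intersection of $\mathbf{K}$-subspaces of $X^s$, so that property $(\mathrm{K}_3)$ delivers the conclusion; Theorem \ref{K-reflection} then closes the argument. The key observation is Lemma \ref{sobd=irr}: $\mathbf{K}(X)\subseteq\ir_c(X)$, so $P_H(\mathbf{K}(X))$ is a topological subspace of $P_H(\ir_c(X))=X^s$, and $X^s$ is sober.

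Next, for every continuous map $f:X\longrightarrow Y$ to a $\mathbf{K}$-space $Y$, I would form the canonical sobrification extension $\hat{f}:X^s\longrightarrow Y^s$, characterized by $\hat{f}\circ\eta_X=\eta_Y\circ f$ and given concretely by $\hat{f}(A)=\overline{f(A)}$ (closure in $Y$) for $A\in\ir_c(X)$. This is well-defined and continuous: irreducibility of $A$ yields irreducibility of $\overline{f(A)}$, and for any $V\in\mathcal O(Y)$ one checks $\hat{f}^{-1}(\Diamond V)=\Diamond f^{-1}(V)$. Since $Y$ is a $\mathbf{K}$-space, $\eta_Y:Y\longrightarrow Y^s$ is a topological embedding whose image $\eta_Y(Y)$ is homeomorphic to $Y$, hence a $\mathbf{K}$-subspace of the sober space $Y^s$ by $(\mathrm{K}_1)$. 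Property $(\mathrm{K}_4)$ applied to $\hat{f}:X^s\longrightarrow Y^s$ (both sober) then gives that $\hat{f}^{-1}(\eta_Y(Y))$ is a $\mathbf{K}$-subspace of $X^s$.

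The central identity to verify is
\begin{equation*}
\mathbf{K}(X)=\bigcap\bigl\{\hat{f}^{-1}(\eta_Y(Y)) : f:X\longrightarrow Y\text{ continuous},\ Y\in\mathbf{K}\bigr\}.
\end{equation*}
For $\subseteq$, if $A\in\mathbf{K}(X)$ and $f:X\to Y$ is given, then by definition $\overline{f(A)}=\overline{\{y_A\}}=\eta_Y(y_A)$, so $\hat{f}(A)\in\eta_Y(Y)$. For $\supseteq$, if $A$ lies in the right-hand side, then for every continuous $f:X\to Y$ to a $\mathbf{K}$-space, $\overline{f(A)}=\hat{f}(A)\in\eta_Y(Y)$, i.e., $\overline{f(A)}=\overline{\{y\}}$ for some $y\in Y$; uniqueness of $y$ follows from $Y$ being $T_0$. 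Hence $A\in\mathbf{K}(X)$.

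With the identity in hand, $(\mathrm{K}_3)$ (applied in the sober space $X^s$) yields that $\mathbf{K}(X)$ is a $\mathbf{K}$-subspace of $X^s$, which is precisely the assertion that $P_H(\mathbf{K}(X))$ is a $\mathbf{K}$-space. Therefore $\mathbf{K}$ is adequate, and Corollary \ref{K-adequate reflective} (combined with Theorem \ref{K-reflection}) gives the reflectivity of $\mathbf{K}$ in $\mathbf{Top}_0$. The only real delicacy I expect is the size of the indexing collection in the intersection: it is a priori a proper class, but the family of $\mathbf{K}$-subspaces of the fixed sober space $X^s$ is a set, so the intersection is genuinely over a set of subspaces and $(\mathrm{K}_3)$ applies without any set-theoretic sleight of hand; alternatively, one may restrict to $Y$ of cardinality $\leq\lvert X^s\rvert$ without changing the intersection.
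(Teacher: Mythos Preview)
The survey paper does not reproduce a proof of this theorem; it merely cites \cite{xu20}. Your argument is correct and is precisely the expected one: it combines the Keimel--Lawson machinery $(\mathrm{K}_1)$--$(\mathrm{K}_4)$ with the $\mathbf{K}$-determined-set framework of \cite{xu20} by realizing $P_H(\mathbf{K}(X))$ as a subspace of the sober space $X^s$ and writing it as an intersection of $\mathbf{K}$-subspaces, each arising as $\hat{f}^{-1}(\eta_Y(Y))$ for a test map $f:X\to Y$ into a $\mathbf{K}$-space. The verification of the displayed identity, the continuity of $\hat{f}$, and the handling of the size issue are all sound; in particular, your observation that the family $\{\hat{f}^{-1}(\eta_Y(Y))\}$ is a \emph{set} of subspaces of $X^s$ (even though indexed by a proper class of maps) is exactly what is needed for $(\mathrm{K}_3)$ to apply cleanly.
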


\begin{theorem}\label{d WF sober reflective} (\cite{xu20}) For $\mathbf{K}\in \{ \mathbf{Sob}, \mathbf{Top}_d, \mathbf{Top}_w\}$, $\mathbf{K}$ is adequate, and hence reflective in $\mathbf{Top}_0$.
	\end{theorem}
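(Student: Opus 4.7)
The plan is to verify that each of $\mathbf{Sob}$, $\mathbf{Top}_d$, and $\mathbf{Top}_w$ is a Keimel-Lawson category, and then invoke Theorem \ref{K-L reflective} to conclude both adequacy and reflectivity in one stroke. For $\mathbf{Sob}$, one can also argue more directly: by Lemma \ref{sobd=irr}, $\mathbf{Sob}(X)=\ir_c(X)$, so $P_H(\mathbf{Sob}(X))=P_H(\ir_c(X))=X^s$, which is sober by Remark \ref{eta continuous}(2). This immediately gives adequacy of $\mathbf{Sob}$. The real work is in the two remaining cases.

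For the Keimel-Lawson axioms, (K1) is immediate in all three cases and (K2) is recorded at the start of Section 3 (every sober space is well-filtered and every well-filtered space is a $d$-space). For (K3) with $\mathbf{K}=\mathbf{Top}_d$, let $\{Y_i\}_{i\in I}$ be $d$-subspaces of a sober space $S$ and let $D$ be directed in $\bigcap_i Y_i$. Since $S$ is sober (hence a $d$-space), $\bigvee D$ exists in $S$; the same supremum is computed in each $Y_i$ because $Y_i$ carries the inherited specialization order and is itself a $d$-space, so $\bigvee D\in Y_i$ for every $i$, and hence $\bigvee D\in\bigcap_i Y_i$. For (K3) with $\mathbf{K}=\mathbf{Top}_w$, let $\mathcal K$ be a filtered family in $\mk(\bigcap_i Y_i)$ whose intersection lies in an open $U=U'\cap\bigcap_i Y_i$ with $U'\in\mathcal O(S)$. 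Each member of $\mathcal K$ is compact saturated in every $Y_i$, so applying the well-filteredness of each $Y_i$ to the open set $U'\cap Y_i$ yields some $K\in\mathcal K$ with $K\subseteq U'\cap Y_i$ in each $Y_i$; a short argument using filteredness of $\mathcal K$ pins down a single $K\subseteq U$.

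Axiom (K4) is the most technical. For $f:S\to T$ continuous between sober spaces and $Y$ a $\mathbf{K}$-subspace of $T$, I would use the following characterizations in the sober setting: a subspace of a sober space is a $d$-subspace iff it is closed under directed suprema taken in the ambient sober space, and is well-filtered iff it is ``Smyth-closed'' in the sense that whenever a filtered family of compact saturated sets of $S$ contained in $f^{-1}(Y)$ has intersection inside an open of $S$ meeting $f^{-1}(Y)$ in the relative open, some member already fits. Given such a characterization in $T$, the preimage under $f$ inherits the analogous property in $S$ because $f$ is monotone on specialization orders and maps directed sets to directed sets (resp.\ compact saturated sets to compact saturated sets via $P_S(f)$, cf.\ Lemma \ref{Ps functor}).

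The main obstacle I expect is (K4) for $\mathbf{Top}_w$: the preimage argument needs a careful transfer of filtered families of compact saturated sets from $S$ to $T$ via $P_S(f)$, and the well-filteredness condition in $Y$ must then be lifted back to $f^{-1}(Y)$ inside $S$; this is where one really uses sobriety of both $S$ and $T$ (to invoke the Hofmann-Mislove correspondence between compact saturated sets and Scott-open filters of opens, Theorem \ref{Hofmann-Mislove theorem}). Once (K1)-(K4) are in place, Theorem \ref{K-L reflective} finishes the proof.
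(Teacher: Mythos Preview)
The paper is a survey and does not give its own proof of this theorem; it simply cites \cite{xu20}. The route taken there is the \emph{direct} one advertised in the title of \cite{xu20}: for each of the three categories one first identifies $\mathbf K(X)$ concretely---$\ir_c(X)$ for $\mathbf{Sob}$, $\mathcal D_c(X)$ for $\mathbf{Top}_d$, and $\mathsf{WD}(X)$ for $\mathbf{Top}_w$---and then verifies directly that $P_H(\mathbf K(X))$ is a $\mathbf K$-space (for $\mathbf{Top}_w$ this uses the characterization of well-filteredness recorded here as Theorem~\ref{WFwdc}). Your plan to verify the Keimel--Lawson axioms and then invoke Theorem~\ref{K-L reflective} is a legitimate alternative, and is in fact how \cite{wu-xi-xu-zhao-19} first obtained reflectivity of $\mathbf{Top}_w$; but it is a genuinely different route from \cite{xu20}, and not a shorter one.

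That said, your sketch of (K3) for $\mathbf{Top}_w$ has a real gap. You claim that each $K\in\mathcal K$ is compact \emph{saturated} in every $Y_i$. Compactness passes to the larger space, but saturation does not: the $Y_i$-saturation $\ua_{Y_i}K$ may strictly contain $K$. Once you replace $\mathcal K$ by $\{\ua_{Y_i}K:K\in\mathcal K\}$ in $\mk(Y_i)$, the intersection $\bigcap_K\ua_{Y_i}K$ can pick up points of $Y_i\setminus Y$ that have no reason to lie in $U'$, so well-filteredness of $Y_i$ does not deliver a member inside $U'\cap Y_i$. (Incidentally, if your argument \emph{did} work for a single index $i$ you would already be done, since $K\subseteq Y$ automatically; the ``short filteredness argument'' over all $i$ is neither needed nor available when $I$ is infinite.) The actual verification of (K3) and (K4) for $\mathbf{Top}_w$ in \cite{wu-xi-xu-zhao-19} requires Rudin-type machinery and is not as elementary as your outline suggests. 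Your treatment of (K4) is likewise only a programme: the ``characterizations in the sober setting'' you invoke are not established anywhere in this paper and would themselves need proof.
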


\begin{theorem}\label{K-prod} (\cite{xu20})
	Suppose that $\mathbf{K}$ is adequate and closed with respect to homeomorphisms. Then for any family $\{X_i:i\in I\}$ of $T_0$ spaces, the following two conditions are equivalent:
	\begin{enumerate}[\rm(1)]
		\item The product space $\prod_{i\in I}X_i$ is a $\mathbf{K}$-space.
		\item For each $i \in I$, $X_i$ is a $\mathbf{K}$-space.
	\end{enumerate}
\end{theorem}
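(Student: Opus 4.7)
The plan is to derive both implications from the universal property of the $\mathbf{K}$-reflection, which exists for every $T_0$ space by Corollary~\ref{K-adequate reflective}. For (2)$\Rightarrow$(1), write $X=\prod_{i\in I}X_i$ and let $\eta_X^k:X\longrightarrow X^k$ be the reflection. Since each $X_i\in\mathbf{K}$, every projection $p_i:X\longrightarrow X_i$ factors uniquely as $p_i=p_i^\ast\circ\eta_X^k$ for a continuous $p_i^\ast:X^k\longrightarrow X_i$. These assemble into a continuous map $\phi:X^k\longrightarrow X$, $y\mapsto(p_i^\ast(y))_{i\in I}$, satisfying $p_i\circ\phi=p_i^\ast$; a coordinate-wise check gives $\phi\circ\eta_X^k=\mathrm{id}_X$. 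For the reverse composition, both $\mathrm{id}_{X^k}$ and $\eta_X^k\circ\phi$ are continuous self-maps of $X^k$ that compose with $\eta_X^k$ to produce $\eta_X^k$, so uniqueness in the universal property forces $\eta_X^k\circ\phi=\mathrm{id}_{X^k}$. Hence $\eta_X^k$ is a homeomorphism and $X\in\mathbf{K}$ by closure under homeomorphisms.

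For (1)$\Rightarrow$(2) the key step is a general retract principle: if $Y\in\mathbf{K}$ and $r:Y\longrightarrow Z$ is a continuous retraction with section $s:Z\longrightarrow Y$, then $Z\in\mathbf{K}$. The proof mirrors the previous argument: factor $s=s^\ast\circ\eta_Z^k$ by the universal property applied with target $Y$; then $(r\circ s^\ast)\circ\eta_Z^k=r\circ s=\mathrm{id}_Z$, while $\mathrm{id}_{Z^k}$ and $\eta_Z^k\circ r\circ s^\ast$ are two self-maps of $Z^k$ both sending $\eta_Z^k$ to $\eta_Z^k$ upon precomposition, so uniqueness forces $\eta_Z^k\circ r\circ s^\ast=\mathrm{id}_{Z^k}$. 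Thus $\eta_Z^k$ is a homeomorphism and $Z\in\mathbf{K}$. To conclude, fix $i\in I$; non-emptyness of $X$ lets us choose $a_j\in X_j$ for each $j\neq i$, and the map $s_i:X_i\longrightarrow X$ given by $(s_i(x))_i=x$, $(s_i(x))_j=a_j$ for $j\neq i$, is a continuous section of $p_i$, exhibiting $X_i$ as a retract of the $\mathbf{K}$-space $X$. The retract principle then yields $X_i\in\mathbf{K}$.

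The main obstacle is keeping the uniqueness invocations of the universal property straight: in each direction one must recognise two continuous maps into the reflection ($X^k$ or $Z^k$) that become equal after precomposing with the unit, and thereby conclude that they already coincide. Once this diagram-chase is set up, no further technology is needed beyond Theorem~\ref{K-reflection} and Corollary~\ref{K-adequate reflective}; in particular, nothing about the internal description of $\mathbf{K}$-determined sets enters the argument, only the existence and universality of the $\mathbf{K}$-reflection together with the hypothesis that $\mathbf{K}$ is closed under homeomorphisms.
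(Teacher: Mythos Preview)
The paper does not reproduce a proof of this theorem; it merely cites \cite{xu20}, so there is no argument in the present paper to compare against directly. Your proof is correct: both implications follow cleanly from the bare universal property of the $\mathbf{K}$-reflection (Corollary~\ref{K-adequate reflective}) together with closure under homeomorphisms, via the standard fact that in a reflective subcategory closed under isomorphisms an object $X$ belongs to the subcategory precisely when the unit $\eta_X^k$ is an isomorphism, and that it suffices to produce a one-sided continuous inverse because uniqueness then forces it to be two-sided.

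Two brief remarks. First, your retract principle (if $Y\in\mathbf{K}$ and $Z$ is a retract of $Y$ then $Z\in\mathbf{K}$) is a pleasant general fact worth stating on its own; the argument you give is the clean way to see it and uses nothing about $\mathbf{K}$-determined sets, only reflectivity. Second, in (1)$\Rightarrow$(2) you invoke non-emptyness of $X=\prod_{i\in I}X_i$ to build the section $s_i$; this is needed, and is the usual tacit convention in this setting (if some factor is empty the product is empty, hence sober, hence a $\mathbf{K}$-space, while the remaining factors are unconstrained, so the equivalence would fail without this convention). With that caveat in place your argument is complete.
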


\begin{corollary}\label{WF-prod} (\cite{Shenchon, xu-shen-xi-zhao1, xuxizhao})
	For any family $\{X_i:i\in I\}$ of $T_0$ spaces, the following two conditions are equivalent:
	\begin{enumerate}[\rm(1)]
		\item The product space $\prod_{i\in I}X_i$ is well-filtered.
		\item For each $i \in I$, $X_i$ is well-filtered.
	\end{enumerate}
\end{corollary}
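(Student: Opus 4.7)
The plan is to derive this as a direct instance of Theorem \ref{K-prod} by taking $\mathbf{K}=\mathbf{Top}_w$. Since the statement is an "iff" about product spaces with respect to the property of being well-filtered, and Theorem \ref{K-prod} already delivers exactly such an equivalence whenever $\mathbf{K}$ is adequate and closed under homeomorphisms, the task reduces to verifying these two hypotheses for $\mathbf{Top}_w$.

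First I would observe that $\mathbf{Top}_w$ is closed with respect to homeomorphisms, i.e.\ it satisfies property $(\mathrm{K}_1)$. This is immediate because well-filteredness is defined purely in terms of the open-set lattice (open sets, compact saturated sets, and set-theoretic containment), so it is invariant under homeomorphism; a homeomorphic copy of a well-filtered $T_0$ space is again a well-filtered $T_0$ space. Next I would invoke Theorem \ref{d WF sober reflective}, which states explicitly that $\mathbf{Top}_w$ is adequate. Both hypotheses of Theorem \ref{K-prod} are thereby satisfied for $\mathbf{K}=\mathbf{Top}_w$.

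With these verifications in place, the conclusion of Theorem \ref{K-prod} applied to $\mathbf{Top}_w$ reads exactly as the statement of Corollary \ref{WF-prod}: the product space $\prod_{i\in I}X_i$ is a $\mathbf{Top}_w$-space (i.e.\ well-filtered) if and only if each factor $X_i$ is a $\mathbf{Top}_w$-space (i.e.\ well-filtered). Hence the corollary follows.

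I do not anticipate any obstacle here, since all the substantive work -- in particular the nontrivial adequacy of $\mathbf{Top}_w$ (which underlies the existence of the well-filtered reflection and the good behavior of $P_H(\wdd(X))$) and the general product theorem -- has already been established in the cited results. The only point meriting a brief remark is the closure under homeomorphisms, which is routine but required by the hypotheses of Theorem \ref{K-prod}.
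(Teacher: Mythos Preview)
Your proposal is correct and follows precisely the route the paper intends: Corollary~\ref{WF-prod} is placed immediately after Theorem~\ref{K-prod} and is obtained by specializing $\mathbf{K}=\mathbf{Top}_w$, using Theorem~\ref{d WF sober reflective} for adequacy and the evident homeomorphism-invariance of well-filteredness.
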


\begin{theorem}\label{sobrification prod} (\cite{Hoffmann1, Hoffmann2})
	For any family $\{X_i: i\in I\}$ of $T_0$ spaces, $(\prod_{i\in I}X_i)^s=\prod_{i\in I}X_i^s$ (up to homeomorphism).
\end{theorem}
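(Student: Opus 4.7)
The plan is to construct an explicit homeomorphism $\phi:(\prod_{i\in I}X_i)^s\longrightarrow \prod_{i\in I}X_i^s$ compatible with the canonical sobrification maps, using the description $Y^s=P_H(\ir_c(Y))$ from Remark \ref{eta continuous}(2). Since the sobrification is unique up to homeomorphism, exhibiting such a $\phi$ suffices.

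First, I would show that $\prod_{i\in I}X_i^s$ is sober. Each $X_i^s$ is sober by construction, so Theorem \ref{K-prod} applied to $\mathbf{K}=\mathbf{Sob}$ (which is adequate by Theorem \ref{d WF sober reflective} and trivially closed under homeomorphisms) yields sobriety of the product. Then I would define $\phi(A)=(\overline{p_i(A)})_{i\in I}$ for $A\in\ir_c(\prod_{i\in I}X_i)$; this is well-defined because continuous images and closures of irreducible sets remain irreducible. Its proposed inverse sends $(B_i)_{i\in I}$ to $\prod_{i\in I}B_i$, which is irreducible closed by the standard fact that products of nonempty irreducible closed sets are irreducible closed (reduced to the binary case by partitioning a factor according to which closed piece of a given cover contains each ``slice'').

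The crux of bijectivity is the identity $A=\prod_{i\in I}\overline{p_i(A)}$ for $A\in\ir_c(\prod_{i\in I}X_i)$. One inclusion is immediate. For the other, assume $(y_i)\in \prod_{i\in I}\overline{p_i(A)}\setminus A$ and choose a basic neighborhood $W=\bigcap_{j\in F}p_{i_j}^{-1}(V_{i_j})$ of $(y_i)$ with $W\cap A=\emptyset$; then $A\subseteq \bigcup_{j\in F}p_{i_j}^{-1}(X_{i_j}\setminus V_{i_j})$, a finite union of closed sets, so irreducibility forces $A\subseteq p_{i_0}^{-1}(X_{i_0}\setminus V_{i_0})$ for some $i_0\in F$, whence $\overline{p_{i_0}(A)}\subseteq X_{i_0}\setminus V_{i_0}$, contradicting $y_{i_0}\in V_{i_0}$. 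I expect this to be the main technical step.

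For the topological part, the subbasic opens of $\prod_{i\in I}X_i^s$ are $p_j^{-1}(\Diamond V_j)$ with $V_j\in\mathcal O(X_j)$, and a direct computation using $p_j\circ\phi(A)=\overline{p_j(A)}$ together with $\overline{p_j(A)}\cap V_j\neq\emptyset \iff p_j(A)\cap V_j\neq\emptyset$ (since $V_j$ is open) gives $\phi^{-1}(p_j^{-1}(\Diamond V_j))=\Diamond p_j^{-1}(V_j)$, open in $(\prod_{i\in I}X_i)^s$; conversely, $\Diamond$ distributes over finite intersections on irreducible sets (if $A\cap U\neq\emptyset$ and $A\cap V\neq\emptyset$ then otherwise $(A\setminus U)\cup(A\setminus V)=A$ would violate irreducibility) and trivially over arbitrary unions, so the two topologies match under $\phi$. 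The compatibility $\phi\circ\eta_{\prod X_i}=\prod \eta_{X_i}$ follows from $\overline{p_i(\overline{\{(x_j)_{j\in I}\}})}=\overline{\{x_i\}}$, completing the identification of the sobrifications.
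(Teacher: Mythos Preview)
The paper does not prove this theorem; it is quoted as a known result from Hoffmann's papers \cite{Hoffmann1, Hoffmann2} and stated without argument. Your proposal supplies a correct, self-contained direct proof. The key identity $A=\prod_{i\in I}\overline{p_i(A)}$ for irreducible closed $A$ is established cleanly by your irreducibility-versus-basic-cover argument, and the topological comparison via $\phi^{-1}(p_j^{-1}(\Diamond V_j))=\Diamond p_j^{-1}(V_j)$ together with the distributivity of $\Diamond$ over finite intersections on irreducible sets is exactly what is needed to match the subbases.

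Two minor remarks. First, your justification that $\prod_i B_i$ is irreducible (``reduced to the binary case by partitioning a factor according to which closed piece contains each slice'') is a bit terse for arbitrary index sets; the open-set formulation (if $U_1,U_2$ meet $\prod_i B_i$ then so does $U_1\cap U_2$, checked on basic opens coordinatewise using irreducibility of each $B_i$) is more transparent and avoids any finiteness worry. Second, invoking Theorem~\ref{K-prod} and Theorem~\ref{d WF sober reflective} to get sobriety of $\prod_i X_i^s$ is legitimate in the paper's ordering, but it is heavier than needed: once your bijection $\phi$ is in hand, sobriety of the product follows immediately, since every irreducible closed subset of $\prod_i X_i^s$ is a product $\prod_i C_i$ with $C_i\in\ir_c(X_i^s)$, hence a point closure. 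Either way, the argument stands.
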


\begin{theorem}\label{K-reflectionprod}  (\cite{xu20})
	For an adequate $\mathbf{K}$ and a finite family $\{X_i: 1\leq i\leq n\}$ of $T_0$ spaces,  $(\prod\limits_{i=1}^{n}X_i)^k=\prod\limits_{i=1}^{n}X_i^k$  (up to homeomorphism).
\end{theorem}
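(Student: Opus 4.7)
The plan is to verify the universal property directly: with $\mu=\prod_{i=1}^n \eta_{X_i}^k$, I claim that $\bigl\langle \prod_{i=1}^n X_i^k,\mu\bigr\rangle$ is the $\mathbf{K}$-reflection of $X=\prod_{i=1}^n X_i$. Since $\mathbf{K}$-reflections are unique up to homeomorphism, the stated isomorphism follows. By induction, it suffices to handle $n=2$, so I write $X$ and $Y$ for the two factors and $Z$ for an arbitrary $\mathbf{K}$-space target. By Theorem \ref{K-reflection}, $X^k$ and $Y^k$ are $\mathbf{K}$-spaces, and Theorem \ref{K-prod} (whose hypotheses are met by adequate categories closed under homeomorphism, as is standing in the relevant examples) guarantees $X^k\times Y^k$ is a $\mathbf{K}$-space; the map $\mu$ is continuous coordinatewise.

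The key lemma, given a continuous $f\colon X\times Y\to Z$, is that $A\times B\in \mathbf{K}(X\times Y)$ whenever $A\in \mathbf{K}(X)$ and $B\in \mathbf{K}(Y)$. To prove it, I apply $\mathbf{K}$-determinedness of $A$ to the continuous slice $f(-,y)\colon X\to Z$, obtaining a unique $g(y)\in Z$ with $\overline{f(A\times\{y\})}=\overline{\{g(y)\}}$ for each $y\in Y$. The resulting $g\colon Y\to Z$ is continuous via the slice identity
$$g^{-1}(V)=\bigcup_{a\in A}\{y\in Y:(a,y)\in f^{-1}(V)\},$$
which presents the preimage as a union of open vertical slices of $f^{-1}(V)$. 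Then $\mathbf{K}$-determinedness of $B$ applied to $g$ yields a unique $z\in Z$ with $\overline{g(B)}=\overline{\{z\}}$, and a two-way closure inclusion---using $f(a,b)\in\overline{\{g(b)\}}$ for ``$\subseteq$'', and $\overline{\{g(b)\}}=\overline{f(A\times\{b\})}\subseteq\overline{f(A\times B)}$ combined with $z\in\overline{g(B)}$ for ``$\supseteq$''---confirms $\overline{f(A\times B)}=\overline{\{z\}}$, with uniqueness of $z$ from the $T_0$ property.

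Using this lemma, I define $f^*\colon X^k\times Y^k\to Z$ under the identification $X^k\times Y^k\cong P_H(\mathbf{K}(X))\times P_H(\mathbf{K}(Y))$ by $f^*(A,B)=z$. Continuity follows by unwinding: for $V\in\mathcal O(Z)$,
$$(f^*)^{-1}(V)=\{(A,B):(A\times B)\cap f^{-1}(V)\neq\emptyset\},$$
and decomposing $f^{-1}(V)=\bigcup_{\alpha}U_\alpha\times W_\alpha$ into basic open rectangles rewrites this as $\bigcup_\alpha(\Diamond U_\alpha\times\Diamond W_\alpha)$, manifestly open. The identity $f^*\circ\mu=f$ is immediate from $\overline{\{x\}}\times\overline{\{y\}}=\overline{\{(x,y)\}}$. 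Uniqueness of $f^*$: for any continuous $g\colon X^k\times Y^k\to Z$ with $g\circ\mu=f$, fixing $y\in Y$ makes $g(-,\overline{\{y\}})$ a continuous lift of $f(-,y)\colon X\to Z$, forced equal to $f^*(-,\overline{\{y\}})$ by the universal property of $X^k$; then fixing $A\in X^k$ and applying the universal property of $Y^k$ similarly pins down $g(A,-)$. Finally, Theorem \ref{K-reflection} identifies both $\bigl(\prod_{i=1}^n X_i\bigr)^k$ and the product $\prod_{i=1}^n X_i^k$ (equipped with $\mu$) as $\mathbf{K}$-reflections of $\prod X_i$, giving the homeomorphism.

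The main obstacle is the product lemma $A\times B\in\mathbf{K}(X\times Y)$: the continuity of the slice-map $g$ depends on the interplay between adequacy (well-definedness and uniqueness of $g(y)$) and the product topology (the open-slice decomposition), and the ``$\supseteq$'' inclusion in $\overline{f(A\times B)}=\overline{\{z\}}$ is the most delicate step, demanding that the point $z\in\overline{g(B)}$ be approximated in a way compatible with the nested closures $\overline{f(A\times\{b\})}$ for $b\in B$. Once this lemma is in hand, the continuity of $f^*$, the factorization, and the uniqueness argument are straightforward bookkeeping, and the induction on $n$ is trivial.
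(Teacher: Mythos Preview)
Your argument is correct, and it is essentially the approach underlying the cited source: the paper itself is a survey and gives no proof here, but your key lemma---that $A\times B$ is $\mathbf{K}$-determined whenever $A$ and $B$ are---is precisely the general-$\mathbf{K}$ version of Lemma~\ref{WDsetprod}, and combined with Theorem~\ref{K-reflection} and Theorem~\ref{K-prod} it yields the reflection exactly as you describe. One small remark: Theorem~\ref{K-prod} as stated requires $\mathbf{K}$ to be closed under homeomorphisms in addition to adequate, so your parenthetical caveat is not optional but a genuine extra hypothesis (harmless in all the intended applications).
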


\begin{corollary}\label{Keimel-Lawson reflectionprod}  (\cite{xu20})
	For a Keimel-Lawson category $\mathbf{K}$ and a finite family $\{X_i: 1\leq i\leq n\}$ of $T_0$ spaces,  $(\prod\limits_{i=1}^{n}X_i)^k=\prod\limits_{i=1}^{n}X_i^k$  (up to homeomorphism).
\end{corollary}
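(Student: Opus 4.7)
The strategy is completely straightforward: combine Theorem \ref{K-L reflective} with Theorem \ref{K-reflectionprod}. First I would invoke Theorem \ref{K-L reflective}, which guarantees that every Keimel-Lawson category $\mathbf{K}$ is adequate in the sense of Definition \ref{K-adequate}. This verifies exactly the hypothesis of Theorem \ref{K-reflectionprod}, and the conclusion $(\prod_{i=1}^{n} X_i)^k \cong \prod_{i=1}^{n} X_i^k$ (up to homeomorphism) follows at once.

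If one were to unfold what the cited Theorem \ref{K-reflectionprod} actually does, the plan would be to verify that the pair $\bigl\langle \prod_{i=1}^{n} X_i^k,\, \prod_{i=1}^{n} \eta^k_{X_i}\bigr\rangle$ satisfies the universal property defining the $\mathbf{K}$-reflection of $\prod_{i=1}^{n} X_i$. The two ingredients are: (i) the finite product $\prod_{i=1}^{n} X_i^k$ is itself a $\mathbf{K}$-space, which is covered by Theorem \ref{K-prod} once we know $\mathbf{K}$ is adequate and closed under homeomorphisms (the latter being ($\mathrm{K}_1$), automatic for a Keimel-Lawson category); and (ii) every continuous map $f:\prod_{i=1}^{n} X_i \to Y$ into a $\mathbf{K}$-space $Y$ factors uniquely through the product map.

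The main obstacle in establishing (ii) --- already handled in \cite{xu20} through Theorem \ref{K-reflectionprod} --- is constructing the factoring map componentwise: one fixes all but one coordinate, applies the universal property of each $X_i^k$, and then recombines the resulting one-variable factorizations into a single continuous map on $\prod_{i=1}^{n} X_i^k$. Once continuity and coherence of the recombination are checked, uniqueness is immediate from the denseness of the image of $\eta^k_{X_i}$ in $X_i^k$. At the level of the present corollary, however, nothing beyond citing Theorems \ref{K-L reflective} and \ref{K-reflectionprod} is required.
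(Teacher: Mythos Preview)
Your proposal is correct and matches the paper's approach exactly: the corollary is stated without proof in the paper, being an immediate consequence of Theorem~\ref{K-L reflective} (every Keimel-Lawson category is adequate) and Theorem~\ref{K-reflectionprod} (for adequate $\mathbf{K}$, finite products commute with $\mathbf{K}$-reflection). Your additional remarks about unfolding the argument are accurate but, as you note, unnecessary at the level of this corollary.
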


\begin{corollary}\label{d-reflectionprod} (\cite{Keimel-Lawson, xu20})
	For a finite family $\{X_i: 1\leq i\leq n\}$ of $T_0$ spaces, $(\prod\limits_{i=1}^{n}X_i)^d=\prod\limits_{i=1}^{n}X_i^d$  (up to homeomorphism).
\end{corollary}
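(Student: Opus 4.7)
The plan is to recognize this statement as a direct specialization of Theorem \ref{K-reflectionprod} to the category $\mathbf{K} = \mathbf{Top}_d$ of $d$-spaces. That general product formula, $(\prod_{i=1}^n X_i)^k \cong \prod_{i=1}^n X_i^k$, holds whenever $\mathbf{K}$ is adequate in the sense of Definition \ref{K-adequate}, so the only thing to verify in order to deduce the corollary is that $\mathbf{Top}_d$ satisfies this hypothesis.

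For the adequacy of $\mathbf{Top}_d$, I would simply cite Theorem \ref{d WF sober reflective}, which asserts exactly that each of $\mathbf{Sob}$, $\mathbf{Top}_d$ and $\mathbf{Top}_w$ is adequate. With this in hand, the hypothesis of Theorem \ref{K-reflectionprod} is met for $\mathbf{K} = \mathbf{Top}_d$, and applying that theorem yields the required homeomorphism
\[
\Bigl(\prod_{i=1}^{n} X_i\Bigr)^d \;\cong\; \prod_{i=1}^{n} X_i^d.
\]

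There is essentially no technical obstacle here; the corollary is a one-line deduction once the general machinery is in place. The substantive work is hidden inside Theorem \ref{K-reflectionprod} (which realises $X^k$ as $P_H(\mathbf{K}(X))$ and verifies the universal property for finite products) and inside Theorem \ref{d WF sober reflective} (which establishes that $P_H(\mathbf{Top}_d(X))$ is itself a $d$-space for every $T_0$ space $X$). The only subtlety worth noting is the restriction to \emph{finite} products: this reflects the fact that the underlying Theorem \ref{K-reflectionprod} is stated only in the finite case, because $\mathbf{K}$-determined subsets of a product need not decompose as products of $\mathbf{K}$-determined factors when infinitely many coordinates are involved — a phenomenon explicitly flagged in Questions \ref{K-infiniteset-prodquestion} and \ref{WDinfinite-prodquestion}.
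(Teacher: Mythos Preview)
Your proposal is correct and matches the paper's implicit reasoning: the corollary is stated without proof as an immediate specialization of Theorem~\ref{K-reflectionprod}, using the adequacy of $\mathbf{Top}_d$ from Theorem~\ref{d WF sober reflective}. One could equally route through Corollary~\ref{Keimel-Lawson reflectionprod} (since $\mathbf{Top}_d$ is a Keimel--Lawson category, as the citation of \cite{Keimel-Lawson} suggests), but this amounts to the same thing.
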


\begin{corollary}\label{WF-reflectionprod} (\cite{Shenchon, xu20, xu-shen-xi-zhao1})
	For a finite family $\{X_i: 1\leq i\leq n\}$ of $T_0$ spaces, $(\prod\limits_{i=1}^{n}X_i)^w=\prod\limits_{i=1}^{n}X_i^w$  (up to homeomorphism).
\end{corollary}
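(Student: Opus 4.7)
The plan is to derive this statement as an immediate specialization of the general product formula for reflections, namely Theorem \ref{K-reflectionprod}, applied to the particular Keimel--Lawson-style category $\mathbf{K}=\mathbf{Top}_w$. Since $\mathbf{Top}_w$ has already been shown to be adequate in Theorem \ref{d WF sober reflective}, the hypothesis of Theorem \ref{K-reflectionprod} is satisfied, and the conclusion reads $(\prod_{i=1}^{n} X_i)^w=\prod_{i=1}^{n} X_i^w$ up to homeomorphism. The whole argument is then a substitution step, with no extra verifications needed.

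More concretely, I would first cite Theorem \ref{d WF sober reflective} to record that for any $T_0$ space $X$, the lower Vietoris space $P_H(\mathsf{WD}(X))$ is well-filtered, so that $\mathbf{Top}_w$ is adequate in the sense of Definition \ref{K-adequate}. Next, I would invoke Theorem \ref{K-reflectionprod}, which guarantees that for any adequate $\mathbf{K}$ and any finite family $\{X_i:1\leq i\leq n\}$ of $T_0$ spaces, the canonical comparison map yields a homeomorphism $(\prod_{i=1}^{n}X_i)^k\cong\prod_{i=1}^{n}X_i^k$. Specializing $\mathbf{K}=\mathbf{Top}_w$ and recalling that the well-filtered reflection is denoted by $X^w$, this is exactly the asserted identity.

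Since everything is being inherited from the general machinery, there is essentially no obstacle to overcome at this point; all the substance was absorbed into Theorem \ref{K-reflection}, Theorem \ref{K-reflectionprod}, and the adequacy of $\mathbf{Top}_w$. If one wanted to present the argument without that machinery, the main work would be to check directly that the product of the projections $\prod_{i=1}^n \eta_{X_i}^w: \prod_{i=1}^n X_i \longrightarrow \prod_{i=1}^n X_i^w$ is a well-filtered reflection, which would require verifying the universal property against continuous maps from $\prod_{i=1}^n X_i$ into an arbitrary well-filtered space, using the finite-product description of $\mathsf{WD}$ sets given by Lemma \ref{WDsetprod} and Corollary \ref{WDclosedsetprod}. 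This is precisely what Theorem \ref{K-reflectionprod} packages once and for all, so in the present context the proof reduces to a single application of that theorem.
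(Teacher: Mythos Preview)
Your proposal is correct and matches the paper's approach exactly: the corollary is presented without an explicit proof because it is an immediate specialization of Theorem~\ref{K-reflectionprod} to $\mathbf{K}=\mathbf{Top}_w$, using the adequacy of $\mathbf{Top}_w$ established in Theorem~\ref{d WF sober reflective}. Your additional remarks about how one could unpack the argument via Lemma~\ref{WDsetprod} and Corollary~\ref{WDclosedsetprod} are accurate but not needed for the proof as stated.
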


\begin{question}\label{K-reflectionprod question} (\cite{xu20})
	Suppose that $\mathbf{K}$ is adequate and closed with respect to homeomorphisms. Does the $\mathbf{K}$-reflection functor preserves arbitrary products of $T_0$ spaces? Or equivalently, $(\prod\limits_{i\in I}X_i)^k=\prod\limits_{i\in I}X_i^k$ (up to homeomorphism) hold for any family $\{X_i : i\in I\}$ of $T_0$ spaces?
\end{question}

\begin{question}\label{K-L reflectionprod} (\cite{xu20})
	Let $\mathbf{K}$ be a Keimel-Lawson category. Does the $\mathbf{K}$-reflection functor preserves arbitrary products of $T_0$ spaces? Or equivalently,  $(\prod\limits_{i\in I}X_i)^k=\prod\limits_{i\in I}X_i^k$ (up to homeomorphism) hold for any family $\{X_i : i\in I\}$ of $T_0$ spaces?
\end{question}

\begin{question}\label{d-reflection infinite prod question} (\cite{xu20}) Does the $d$-reflection functor preserves arbitrary products of $T_0$ spaces? Or equivalently, does  $(\prod\limits_{i\in I}X_i)^d=\prod\limits_{i\in I}X_i^d$ (up to homeomorphism) hold for any family $\{X_i : i\in I\}$ of $T_0$ spaces?
\end{question}

\begin{question}\label{WF-reflection infinite prod question} (\cite{xu-shen-xi-zhao1}) Does the well-filtered reflection functor preserves arbitrary products of $T_0$ spaces? Or equivalently, does $(\prod\limits_{i\in I}X_i)^w=\prod\limits_{i\in I}X_i^w$ (up to homeomorphism) hold for any family $\{X_i : i\in I\}$ of $T_0$ spaces?
\end{question}

\begin{definition}\label{Smyth K} (\cite{xu20}) $\mathbf{K}$ is said to be a \emph{Smyth category}, if for any $\mathbf{K}$-space $X$, the Smyth power space $P_S(X)$ is a $\mathbf{K}$-space.
\end{definition}

By Theorem \ref{Heckman-Keimel theorem} and Theorem \ref{Smythwf}, $\mathbf{Sob}$ and $\mathbf{Top}_w$ are Smyth categories. Let $X$ be any $d$-space but not well-filtered (see Example \ref{dcpo is not well-filtered}). Then by Theorem \ref{Smythwf}, $P_S(X)$ is not a $d$-space. So $\mathbf{Top}_d$ is not a Smyth category.

\begin{theorem}\label{SmythK-D} (\cite{xu20})  Let $\mathbf{K}$ be an adequate Smyth category and $X$ a $T_0$ space $X$. If $P_S(X)$ is $\mathbf{K}$-determined, then $X$ is $\mathbf{K}$-determined.
\end{theorem}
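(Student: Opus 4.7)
The plan is to show $\ir_c(X) \subseteq \mathbf K(X)$ (the reverse inclusion being automatic by Lemma \ref{sobd=irr}), so I fix $A \in \ir_c(X)$ and an arbitrary continuous $f : X \to Y$ with $Y$ a $\mathbf K$-space, and aim to produce a single point $y_A \in Y$ with $\overline{f(A)} = \overline{\{y_A\}}$. Since $\xi_X : X \to P_S(X)$ is a topological embedding (Remark \ref{xi continuous}), $\overline{\xi_X(A)}$ is an irreducible closed subset of $P_S(X)$, so the hypothesis that $P_S(X)$ is $\mathbf K$-determined gives $\overline{\xi_X(A)} \in \mathbf K(P_S(X))$.

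Because $\mathbf K$ is a Smyth category, $P_S(Y) \in \mathbf K$, and by Lemma \ref{Ps functor} the map $P_S(f) : P_S(X) \to P_S(Y)$ is continuous; applying the $\mathbf K$-determinedness of $\overline{\xi_X(A)}$ to this map produces a unique $K_0 \in P_S(Y)$ with $\overline{P_S(f)(\overline{\xi_X(A)})} = \overline{\{K_0\}}_{P_S(Y)}$. The naturality $P_S(f) \circ \xi_X = \xi_Y \circ f$, together with continuity, yields $\overline{P_S(f)(\overline{\xi_X(A)})} = \overline{\xi_Y(f(A))}$, hence
\begin{equation*}
\overline{\xi_Y(f(A))} \;=\; \overline{\{K_0\}}_{P_S(Y)}.
\end{equation*}
Unwinding this identity in $P_S(Y)$ (whose specialization order is the reverse-inclusion Smyth order) and using that $\xi_Y$ is an embedding, one checks that a point $y \in Y$ lies in $\overline{f(A)}$ if and only if $\ua y \in \overline{\{K_0\}}_{P_S(Y)}$, equivalently $K_0 \subseteq \ua y$; in short, $\overline{f(A)}$ is exactly the set of lower bounds of $K_0$ in $Y$.

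The heart of the proof is then to extract a single point out of $K_0$. Since $K_0 \in \overline{\{K_0\}}_{P_S(Y)} = \overline{\xi_Y(f(A))}$, if $K_0 \cap \overline{f(A)}$ were empty then the basic open set $\Box(Y \setminus \overline{f(A)})$ would contain $K_0$ yet be disjoint from $\xi_Y(f(A))$, a contradiction; so I may pick some $y_A \in K_0 \cap \overline{f(A)}$. The characterization in the previous paragraph now forces $K_0 \subseteq \ua y_A$, i.e.\ every element of $K_0$ lies above $y_A$, and combined with $y_A \in K_0$ this makes $y_A$ the (unique, by $T_0$-ness) minimum of $K_0$. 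Hence $K_0 = \ua y_A$, and the description of $\overline{f(A)}$ as the lower bounds of $K_0$ collapses to $\overline{f(A)} = \da y_A = \overline{\{y_A\}}$, as required; uniqueness of $y_A$ is automatic in a $T_0$ space.

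The main obstacle I anticipate is precisely this minimum-extraction step: one must notice the small but decisive fact that any common point of $K_0$ and $\overline{f(A)}$ is forced to be the minimum of $K_0$, which is where the Smyth power structure genuinely earns its keep. The computations involving $\overline{\xi_Y(f(A))}$ and the translation across the embedding $\xi_Y$ are routine, but they need to be set up carefully so that the crucial equivalence ``$y \in \overline{f(A)} \iff K_0 \subseteq \ua y$'' comes out cleanly.
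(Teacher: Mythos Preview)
The paper is a survey and does not supply its own proof of this theorem; it merely cites \cite{xu20}. There is therefore no in-paper argument to compare against.

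Your argument is correct. The chain
\[
\overline{P_S(f)\bigl(\overline{\xi_X(A)}\bigr)}=\overline{P_S(f)(\xi_X(A))}=\overline{\xi_Y(f(A))}=\overline{\{K_0\}}_{P_S(Y)}
\]
is justified by continuity of $P_S(f)$ and the naturality $P_S(f)\circ\xi_X=\xi_Y\circ f$, and the translation through the embedding $\xi_Y$ gives exactly $\overline{f(A)}=\{y\in Y: K_0\subseteq\ua y\}$. The ``minimum-extraction'' step is also fine: $K_0\in\overline{\xi_Y(f(A))}$ forces $K_0\cap\overline{f(A)}\neq\emptyset$ via the basic open $\Box(Y\setminus\overline{f(A)})$, and any $y_A$ in that intersection satisfies both $K_0\subseteq\ua y_A$ and $y_A\in K_0$, whence $K_0=\ua y_A$ (using that $K_0$ is saturated) and $\overline{f(A)}=\da y_A$.

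One remark: you never invoke the \emph{adequacy} hypothesis on $\mathbf{K}$; only the Smyth property (to ensure $P_S(Y)$ is a $\mathbf{K}$-space) and the functoriality of $P_S$ are used. So your argument in fact establishes the conclusion under the weaker assumption that $\mathbf{K}$ is a Smyth category, which is worth noting.
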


\begin{question}\label{SmythWD} (\cite{xu20}) Let $\mathbf{K}$ be an adequate Smyth category. Is the Smyth power space $P_S(X)$ of a $\mathbf{K}$-determined $T_0$ space $X$ again $\mathbf{K}$-determined?
\end{question}

In \cite{ZhaoFan}, Zhao and Fan introduced \emph{bounded sobriety}, a
weak notion of sobriety, and showed that every $T_0$ space has a bounded sobrification.
Equivalently, $\mathbf{BSob}$, the category of all bounded sober spaces with continuous mappings, is reflective in $\mathbf{Top}_0$. By the way, using the method of \cite{xu20}, one can directly verify that $\mathbf{BSob}$ is adequate and hence reflective in $\mathbf{Top}_0$. In \cite{ZhaoHo}, Zhao and Ho introduced another weaker notion of sobriety --- the $k$-\emph{bounded sobriety}. A space $X$ is called $k$-\emph{bounded sober} if for any nonempty irreducible closed set $F$ whose supremum exists,
there is a unique point $x\in X$ such that $F=\cl\{x\}$. Zhao and Ho \cite{ZhaoHo} raised a question whether $\mathrm{KB}(X)$, the set of all irreducible closed sets of a $T_0$ space $X$ whose suprema exist, is the canonical $k$-bounded sobrification
of $X$ in the sense of Keimel and Lawson, with respect to the mapping $x\mapsto \cl\{x\}$.

Recently, Zhao Bin, Lu Jing and Wang Kaiyun \cite{ZJW19} gave a negative answer to Zhao and Ho's problem. Furthermore, it is proved in \cite{LWWZ20} that, unlike $\mathbf{Sob}$ and $\mathbf{BSob}$, the category $\mathbf{KBSob}$ of all $k$-bounded sober spaces with continuous mappings is not reflective in $\mathbf{Top}_0$.
 Using the counterexample in \cite{LWWZ20}, one can directly verify that $\mathbf{KBSob}$ is not adequate (cf. \cite[Conclusion section]{xu20}). Also it is noted that the $k$-sobriety is not preserved
by the Smyth power space functor (see \cite[Theorem 4.1]{LWWZ20}).

Let $\mathbf{Top}_k$ be the the category of all $T_0$
spaces with continuous mappings preserving all existing irreducible suprema. Note that every continuous mapping between $k$-bounded sober spaces preserves all existing irreducible suprema (see \cite[Theorem 4.5 and Lemma 5.3]{ZhaoHo}). In \cite{LWWZ20} the following question is stated.

\begin{question}\label{KBsob reflective question} (\cite{LWWZ20}) Is $\mathbf{KBSob}$ reflective in $\mathbf{Top}_k$?
\end{question}

\section{Strong $d$-spaces}

In order to uncover more finer links between $d$-spaces and well-filtered spaces, the notion of strong $d$-spaces has been introduced in \cite{xuzhao}.

\begin{definition}\label{strong $d$-space}  A $T_0$ space $X$ is called a strong $d$-space if for any $D\in \mathcal D(X)$, $x\in X$ and $U\in \mathcal O(X)$, $\bigcap_{d\in D}\ua d\cap \ua x\subseteq U$ implies $\ua d\cap \ua x\subseteq U$ for some $d\in D$. The category of all strong $d$-spaces with continuous mappings is denoted by $\mathbf{S}$-$\mathbf{Top}_d$.
\end{definition}

Figure 3 shows some relationships among classes of spaces lying between $d$-spaces and $T_2$ spaces (see \cite{xuzhao}).

\begin{figure}[ht]
	\centering
	\includegraphics[height=1.5in,width=4.0in]{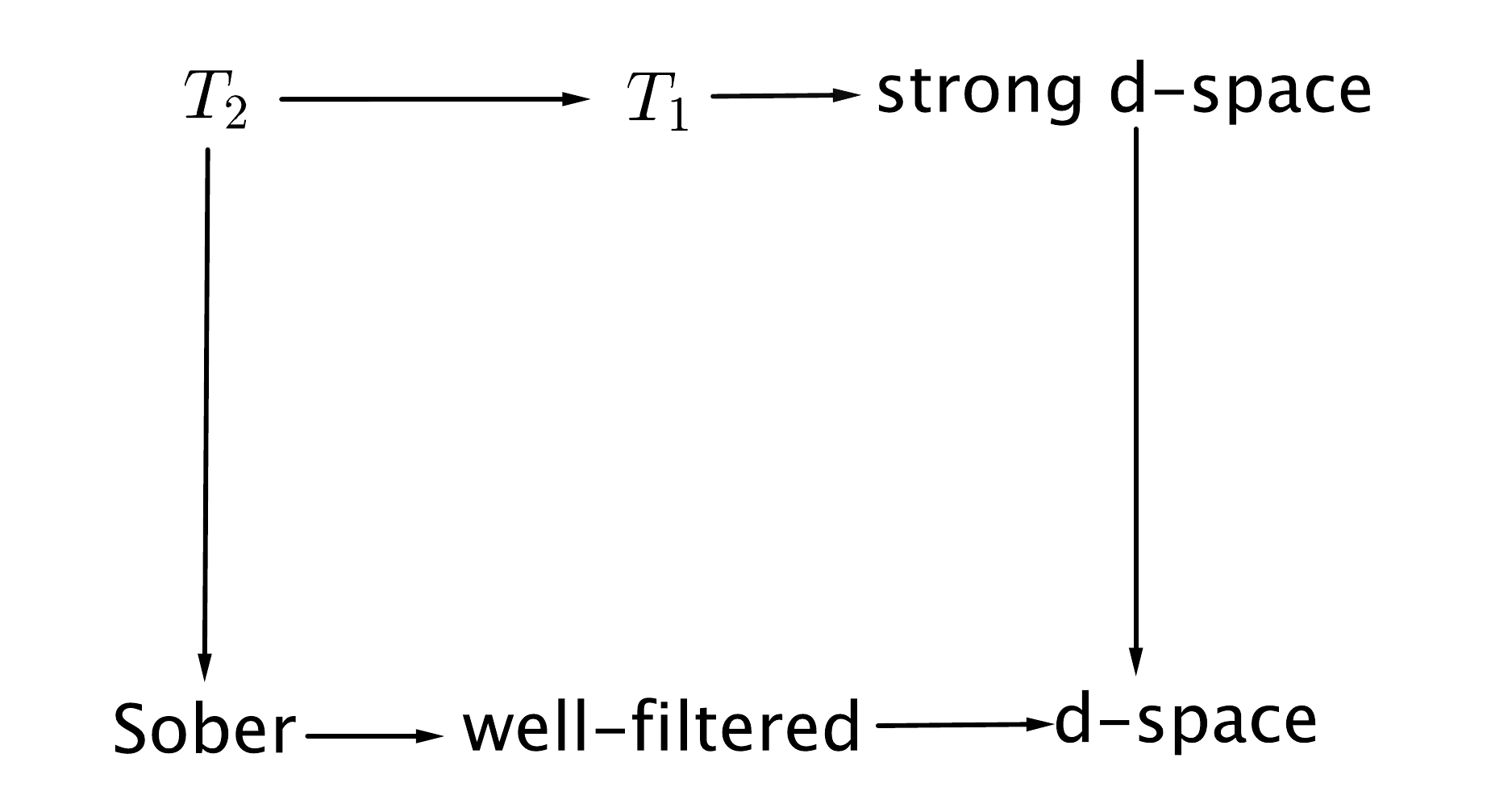}
	\caption{Relations of some spaces lying between $d$-spaces and $T_2$ spaces}
\end{figure}

The following two examples were given in \cite{xuzhao} (see also \cite{Shenchon, xu-shen-xi-zhao1}).

\begin{example}\label{examp1}
	Let $X$ be a countably infinite set and $X_{cof}$ the space of $X$  equipped with the \emph{co-finite topology} (the empty set and the complements of finite subsets of $X$ are open). Then $X_{cof}$ is $T_1$ and hence a strong $d$-space, but it is not well-filtered.
\end{example}

\begin{example}\label{examp3}
	Let $X$ be an uncountably infinite set and $X_{coc}$ the space equipped with \emph{the co-countable topology} (the empty set and the complements of countable subsets of $X$ are open). Then $X_{coc}$ is a well-filtered $T_1$ space and hence a strong $d$-space, but it not sober.
\end{example}

The following example shows that even for a continuous dcpo $P$ with $\Sigma~\!\!P$ coherent,  $\Sigma~\!\!P$ may not be a strong $d$-space.

\begin{example}\label{sober not strong d-space} (\cite{xuzhao}) Let $C=\{a_1, a_2, ..., a_n, ...\}\cup \{\omega_0\}$ and $P=C\cup\{b\}\cup\{\omega_1, ..., \omega_n, ...\}$ with the order (see Figure 4) generated  by
\begin{enumerate}[\rm (a)]
            \item $a_1<a_2<...<a_n<a_{n+1}<...$;
            \item  $a_n<\omega_0$ for all $n\in \mathbb{N}$;
            \item $b<\omega_n$ and $a_m<\omega_n$ for all $n, m\in \mathbb{N}$ with $m\leq n$.
\end{enumerate}

\begin{figure}[ht]
	\centering
	\includegraphics[height=2.2in,width=4.0in]{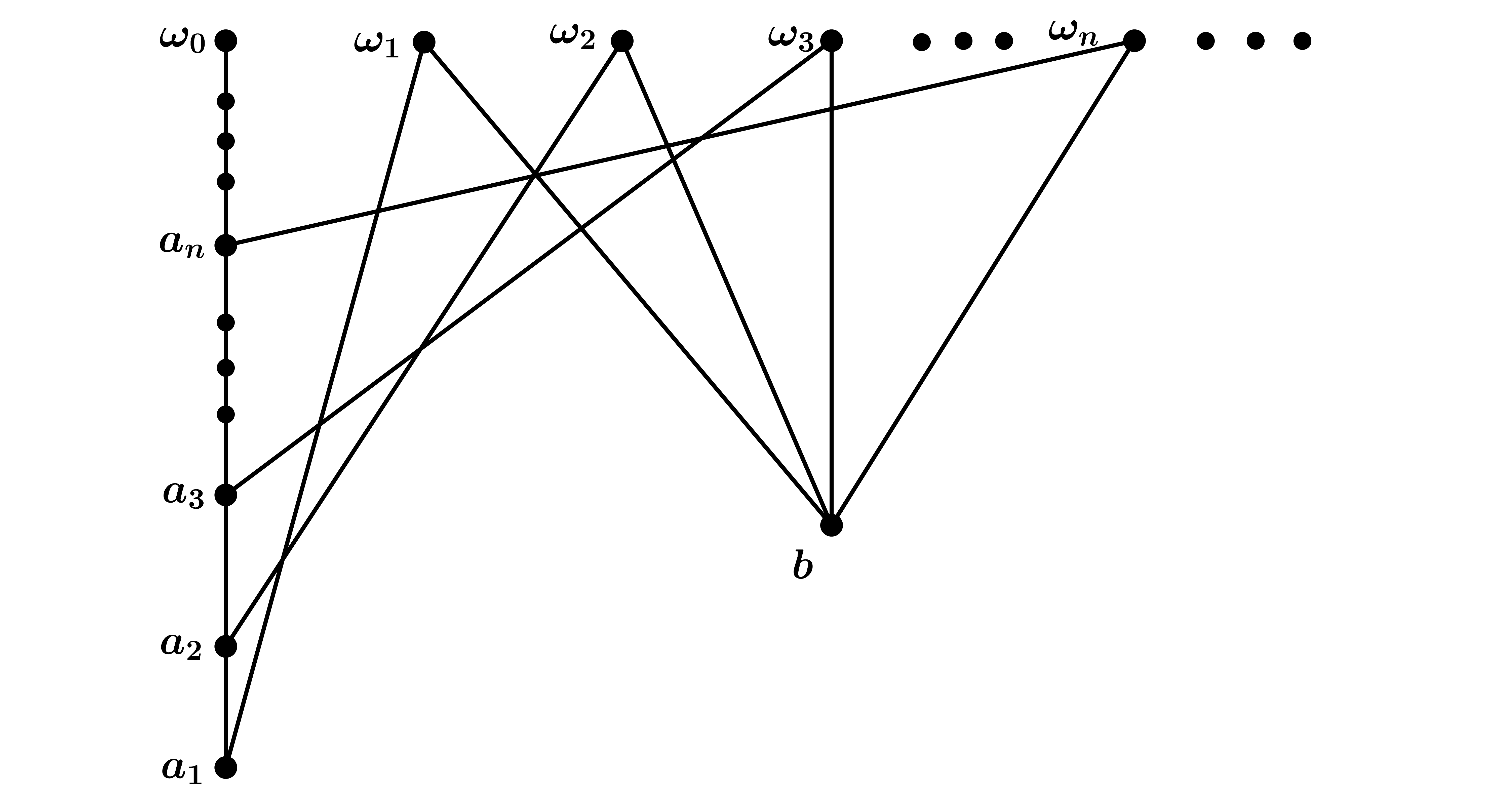}
	\caption{The Poset $P$}
\end{figure}

Then $x\ll x$ for all $x\in P\setminus \{\omega_0\}$. Therefore, $P$ is a continuous dcpo, and hence $\Sigma~\!\!P$ is sober. Clearly, $\ua a_1, \ua b\in \mk (\Sigma~)$, but $\ua a_1\cap \ua b=\{\omega_1, \omega_2, ..., \omega_n, ...\}$ is not Scott compact (note $\{\omega_n\}\in \sigma (P)$ for all $n\in \mathbb{N}$). Thus $\Sigma~\!\!P$ is not coherent. Clearly, $\bigcap_{n\in \mathbb{N}}\ua a_n\cap \ua b=\emptyset$, but $\ua a_n\cap \ua b=\{\omega_n, \omega_{n+1}, ...\}\neq\emptyset$, and consequently, neither $(P, \upsilon (P))$ nor $\Sigma ~\!\! P$ is a strong $d$-space.
\end{example}

\begin{proposition}\label{d-spacestrong d-space} (\cite{xuzhao})  If $X$ is a $d$-space and $\da (\ua x\cap A)\in \Gamma (X)$ for all $x\in X$ and $A\in \Gamma (X)$, then $X$ is a strong $d$-space.
\end{proposition}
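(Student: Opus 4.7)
The plan is a direct contradiction argument exploiting the hypothesis with $A$ taken to be the complement of $U$. Suppose for contradiction that there exist $D\in\mathcal D(X)$, $x\in X$ and $U\in\mathcal O(X)$ with $\bigcap_{d\in D}\ua d\cap\ua x\subseteq U$ but such that $\ua d\cap\ua x\not\subseteq U$ for every $d\in D$. Put $A=X\setminus U\in\Gamma(X)$; for each $d\in D$ choose a witness $y_d\in\ua d\cap\ua x\cap A$.

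The key step is to feed the set $A$ into the hypothesis. By assumption $C:=\da(\ua x\cap A)$ is closed in $X$. Since $y_d\in\ua x\cap A$ and $d\leq y_d$, we have $d\in C$ for every $d\in D$; therefore $D\subseteq C$. Because $X$ is a $d$-space, $X$ (equipped with the specialization order) is a dcpo and every open set is Scott open, which forces every closed set to be closed under directed suprema. Applying this to the directed set $D\subseteq C$ gives $d^{*}:=\bigvee D\in C$.

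By definition of $C$ there exists $z\in X$ with $z\geq d^{*}$ and $z\in\ua x\cap A$. Then $z\geq d$ for all $d\in D$ and $z\geq x$, so $z\in\bigcap_{d\in D}\ua d\cap\ua x\subseteq U$; but simultaneously $z\in A=X\setminus U$, a contradiction. This will establish that $X$ is a strong $d$-space.

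The argument is short and the only conceptual point is the correct use of the $d$-space property (namely, that closed sets in a $d$-space are stable under directed sups, which is immediate from $\mathcal O(X)\subseteq\sigma(X)$). I do not foresee a genuine obstacle; the main care needed is just to apply the hypothesis with the right pair $(x,A)$, where $A$ is the complement of the given open set $U$, rather than trying to involve the directed family $\{\ua d:d\in D\}$ directly in the closedness assumption.
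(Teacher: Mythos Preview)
Your argument is correct. The contradiction setup with $A=X\setminus U$, the observation that each $d\in D$ lies in the closed set $C=\da(\ua x\cap A)$ via the witnesses $y_d$, and the use of the $d$-space property to conclude $\bigvee D\in C$ are all sound; the final contradiction is immediate.

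Note, however, that the present paper is a survey and does not supply its own proof of this proposition---it is quoted from \cite{xuzhao} without argument. So there is no proof in this paper to compare against. That said, your approach is exactly the natural one and is in the spirit of the surrounding results (compare the use of the same closedness hypothesis in Lemma~\ref{Scott compact closed} and Lemma~\ref{Scott top-sd-space}, and the parallel with Theorem~\ref{xi-lawsonWF}); one would expect the original proof in \cite{xuzhao} to proceed along essentially the same lines.
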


\begin{lemma}\label{Scott compact closed} (\cite{xuzhao}) For a poset $P$ and $A\in \Gamma (\Sigma~\!\!P)$, the following two conditions are equivalent:
 \begin{enumerate}[\rm (1)]
 \item $\da (\ua x\cap A)\in \Gamma (\Sigma~\!\!P)$ for all $x\in P$.
 \item $\da (K\cap A)=\bigcup_{k\in K}\da (\ua k\cap A)\in \Gamma (\Sigma ~\!\!P)$ for all $K\in \mk (\Sigma ~\!\!P)$.
 \end{enumerate}
\end{lemma}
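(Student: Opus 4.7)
The direction (2)$\Rightarrow$(1) is immediate: for each $x\in P$, the principal filter $\ua x$ is Scott-compact saturated, so taking $K=\ua x$ in (2) yields $\da(\ua x\cap A)=\bigcup_{k\in\ua x}\da(\ua k\cap A)\in\Gamma(\Sigma P)$. The real content is (1)$\Rightarrow$(2), which I would split into two separate tasks: the set-theoretic equality, and Scott-closedness of the common value.

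For the equality $\da(K\cap A)=\bigcup_{k\in K}\da(\ua k\cap A)$, only the fact that $K$ is saturated is needed. For $\supseteq$, if $y\le z$ with $z\in\ua k\cap A$ for some $k\in K$, then $z\ge k\in K$ and $K=\ua K$ give $z\in K\cap A$. For $\subseteq$, if $y\le z\in K\cap A$, take $k:=z\in K$; then $z\in\ua k\cap A$ and so $y\in\da(\ua k\cap A)$.

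The heart of the argument is to show $B:=\da(K\cap A)$ is Scott-closed. It is plainly a lower set, so suppose $D\in\mathcal D(P)$ with $D\subseteq B$ and $\bigvee D=d^{*}$ exists in $P$. For each $d\in D$ set $L_{d}:=K\cap \da(\ua d\cap A)$. Using the symmetric reformulation ``$k\in\da(\ua d\cap A)\iff\ua k\cap\ua d\cap A\neq\emptyset\iff d\in\da(\ua k\cap A)$'', we see that $d\in B$ forces $L_{d}\neq\emptyset$. Moreover, by (1) each $\da(\ua d\cap A)$ is Scott-closed in $P$, hence $K\setminus L_{d}$ is open in the subspace $K$ of $\Sigma P$. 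Since $D$ is directed and $d_{1}\le d_{2}$ gives $L_{d_{1}}\supseteq L_{d_{2}}$, the family $\{K\setminus L_{d}:d\in D\}$ is an up-directed open cover of $K$ as soon as $\bigcap_{d\in D}L_{d}=\emptyset$. The Scott-compactness of $K$ would then produce some $d_{0}\in D$ with $K\setminus L_{d_{0}}=K$, contradicting $L_{d_{0}}\neq\emptyset$. So
\[
\bigcap_{d\in D}L_{d}=K\cap\bigcap_{d\in D}\da(\ua d\cap A)\neq\emptyset.
\]

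The final step is the key identity $\bigcap_{d\in D}\da(\ua d\cap A)=\da(\ua d^{*}\cap A)$; this is where hypothesis (1) bites. The inclusion $\supseteq$ follows from $d\le d^{*}$. For $\subseteq$, let $y$ lie in every $\da(\ua d\cap A)$; then for each $d\in D$ there is $z_{d}\ge y$, $z_{d}\ge d$, $z_{d}\in A$, so $d\in\da(\ua y\cap A)$ for every $d\in D$. By (1) the set $\da(\ua y\cap A)$ is Scott-closed, hence contains $\bigvee D=d^{*}$, giving $y\in\da(\ua d^{*}\cap A)$ by the same symmetric reformulation. Combined with the previous display, $K\cap\da(\ua d^{*}\cap A)\neq\emptyset$, which means some $k^{*}\in K$ satisfies $d^{*}\in\da(\ua k^{*}\cap A)\subseteq B$. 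Thus $B$ is Scott-closed, finishing (2). The main obstacle I anticipate is precisely this last swap between $\bigcap_{d}\da(\ua d\cap A)$ and $\da(\ua d^{*}\cap A)$: it is the only place where hypothesis (1) is genuinely used (on $\da(\ua y\cap A)$, not just on $A$), and overlooking it would leave the compactness argument dangling without a way to transfer the nonempty intersection back to $d^{*}$.
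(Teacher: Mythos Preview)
Your argument is correct in every step: the set-theoretic equality, the compactness argument on $K$ via the filtered family $\{L_d\}_{d\in D}$, and the crucial identification $\bigcap_{d\in D}\da(\ua d\cap A)=\da(\ua d^{*}\cap A)$ using hypothesis (1) applied to $\da(\ua y\cap A)$ are all sound. The paper itself does not supply a proof of this lemma---it is simply quoted from \cite{xuzhao}---so there is nothing to compare against here; your write-up stands on its own as a complete and well-organized proof, and your closing remark correctly identifies where the hypothesis does its essential work.
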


\begin{lemma}\label{Scott top-sd-space}  (\cite{xuzhao}) For a poset $P$, the following two conditions are equivalent:
 \begin{enumerate}[\rm (1)]
 \item $\Sigma~\!\!P$ is a strong $d$-space.
 \item $P$ is a dcpo, and for any $A\in \Gamma (\Sigma~\!\!P)$ and $x\in P$, $\da (\ua x\cap A)\in \Gamma(\Sigma~\!\!P)$.
 \end{enumerate}
 \end{lemma}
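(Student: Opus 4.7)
The plan is to prove the two directions separately, with the forward direction $(2) \Rightarrow (1)$ following immediately from Proposition \ref{d-spacestrong d-space}, and the reverse direction $(1) \Rightarrow (2)$ requiring a direct unwinding of the definition of strong $d$-space. The pivotal observation throughout is that in any poset $P$, the equality
\[
\bigcap_{d \in D} \ua d \;=\; \ua \bigvee D
\]
holds whenever $D$ is directed with existing supremum, since both sides describe the set of upper bounds of $D$. This is the bridge that converts the hypothesis of the strong $d$-space condition into a statement about $\ua \bigvee D$.

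For $(2) \Rightarrow (1)$, since $P$ is assumed to be a dcpo, $\Sigma P$ is automatically a $d$-space, and the assumed closedness of $\da(\ua x \cap A)$ for every $A \in \Gamma(\Sigma P)$ and every $x \in P$ is precisely the sufficient condition in Proposition \ref{d-spacestrong d-space} applied with $X = \Sigma P$. So $\Sigma P$ is a strong $d$-space; nothing new is needed here.

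For $(1) \Rightarrow (2)$, I would first note that every strong $d$-space is a $d$-space (cf.\ Figure~3), so $\Sigma P$ being a strong $d$-space forces $P$ to be a dcpo. Next, fix $A \in \Gamma(\Sigma P)$ and $x \in P$. The set $\da(\ua x \cap A)$ is manifestly a lower set, so its Scott-closedness reduces to containing the supremum of every directed subset of itself. Take directed $D \subseteq \da(\ua x \cap A)$ with $d_0 = \bigvee D$ and argue by contradiction: if $d_0 \notin \da(\ua x \cap A)$, then $\ua d_0 \cap \ua x \cap A = \emptyset$, and hence $\ua d_0 \cap \ua x \subseteq P \setminus A$, which is Scott open. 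Via the key observation above this reads
\[
\bigcap_{d \in D} \ua d \cap \ua x \;\subseteq\; P \setminus A,
\]
and the strong $d$-space property of $\Sigma P$ yields some $d_* \in D$ with $\ua d_* \cap \ua x \subseteq P \setminus A$. But since $d_* \in D \subseteq \da(\ua x \cap A)$, there is a witness $y \geq d_*$ with $y \in \ua x \cap A$, giving $y \in \ua d_* \cap \ua x \cap A$, a contradiction.

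The main (modest) obstacle is aligning $\ua d_0 \cap \ua x$ with the left-hand side of the strong $d$-space hypothesis; once $\ua d_0$ is rewritten as $\bigcap_{d \in D} \ua d$, the definition slots in and the proof closes by a one-line contradiction. I do not anticipate any subtleties beyond the routine interaction between the specialization order on $P$ and the Scott topology.
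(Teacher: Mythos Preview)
Your argument is correct. The paper does not supply its own proof of this lemma---it is quoted from \cite{xuzhao}---so there is nothing to compare against directly; but your use of Proposition~\ref{d-spacestrong d-space} for $(2)\Rightarrow(1)$ and the direct contradiction argument for $(1)\Rightarrow(2)$ via $\ua\!\bigvee D=\bigcap_{d\in D}\ua d$ is exactly the natural route the surrounding material (Proposition~\ref{d-spacestrong d-space} and the implication strong $d$-space $\Rightarrow$ $d$-space recorded in Figure~3) is set up to support.
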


Theorem \ref{xi-lawsonWF}, Lemma \ref{Scott compact closed} and Lemma \ref {Scott top-sd-space} together imply the following.

\begin{corollary}\label{Scott top-sd WF} For a poset $P$, if $\Sigma P$ is a strong $d$-space, then it is well-filtered.
 \end{corollary}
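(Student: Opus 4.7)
The plan is to string together the three cited results in the obvious way. Assume $\Sigma P$ is a strong $d$-space. First I would invoke Lemma \ref{Scott top-sd-space} to extract two pieces of information: $P$ is a dcpo (so $\Sigma P$ is a $d$-space in the sense required by Theorem \ref{xi-lawsonWF}), and for every closed set $A \in \Gamma(\Sigma P)$ and every point $x \in P$, the set $\da(\ua x \cap A)$ is Scott closed.

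Next I would promote this ``pointwise'' closedness to a ``compact-wise'' closedness via Lemma \ref{Scott compact closed}: since condition (1) of that lemma holds for every $A \in \Gamma(\Sigma P)$, its equivalent condition (2) holds as well, meaning $\da(K \cap A) = \bigcup_{k \in K}\da(\ua k \cap A) \in \Gamma(\Sigma P)$ for every $K \in \mk(\Sigma P)$ and every $A \in \Gamma(\Sigma P)$. This is exactly the hypothesis needed to apply Theorem \ref{xi-lawsonWF}, which then yields that $\Sigma P$ is well-filtered.

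There is essentially no obstacle here beyond bookkeeping; the non-trivial work has been absorbed into the three lemmas already established. The only thing worth double-checking is that the quantifiers line up: Lemma \ref{Scott top-sd-space} gives the closedness of $\da(\ua x \cap A)$ for every $x$ and every $A$, which is precisely the universal form needed to feed into Lemma \ref{Scott compact closed} uniformly in $A$, and Theorem \ref{xi-lawsonWF} in turn requires the property for all $A \in \Gamma(\Sigma P)$ and all $K \in \mk(\Sigma P)$. Once these are aligned, the corollary follows immediately.
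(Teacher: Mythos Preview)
Your proposal is correct and follows exactly the approach indicated in the paper, which simply states that Theorem~\ref{xi-lawsonWF}, Lemma~\ref{Scott compact closed}, and Lemma~\ref{Scott top-sd-space} together imply the corollary. You have spelled out the chaining of these three results precisely as intended, including the observation that $P$ being a dcpo ensures $\Sigma P$ is a $d$-space so that Theorem~\ref{xi-lawsonWF} applies.
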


It is easy to verify that if $A$ is a saturated subspace or a closed subspace of a strong $d$-space $X$, then $A$ is again a strong $d$-space.

\begin{proposition}\label{strong d-space retract} A retract of a strong $d$-space is a strong $d$-space.
\end{proposition}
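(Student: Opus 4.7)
The plan is to use the standard retract structure: assume $Y$ is a retract of a strong $d$-space $X$, so there are continuous maps $s\colon Y\to X$ (section) and $r\colon X\to Y$ (retraction) with $r\circ s=\mathrm{id}_Y$. Since $s$ is injective and continuous into a $T_0$ space, $Y$ is $T_0$. Recall also that any continuous map between $T_0$ spaces preserves the specialization order, so both $s$ and $r$ are monotone; in particular, if $D\in\mathcal{D}(Y)$ then $s(D)\in\mathcal{D}(X)$.

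Now suppose $D\in\mathcal{D}(Y)$, $y\in Y$, and $V\in\mathcal{O}(Y)$ satisfy $\bigcap_{d\in D}\ua d\cap\ua y\subseteq V$ (orders and upper sets in $Y$). The strategy is to transport this inclusion along $s$ to $X$, invoke the strong $d$-space property there, then transport back along $r$. Put $W=r^{-1}(V)\in\mathcal{O}(X)$. I claim $\bigcap_{d\in D}\ua s(d)\cap\ua s(y)\subseteq W$ in $X$: indeed, if $x$ lies in this intersection, then by monotonicity of $r$ and the identity $r\circ s=\mathrm{id}_Y$ we get $r(x)\ge d$ for all $d\in D$ and $r(x)\ge y$, so $r(x)\in \bigcap_{d\in D}\ua d\cap \ua y\subseteq V$, whence $x\in W$.

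Since $X$ is a strong $d$-space, there exists $d_0\in D$ with $\ua s(d_0)\cap\ua s(y)\subseteq W$. To finish, let $y'\in\ua d_0\cap\ua y$ in $Y$. By monotonicity of $s$ we have $s(y')\in\ua s(d_0)\cap\ua s(y)\subseteq W=r^{-1}(V)$, so $y'=r(s(y'))\in V$. Therefore $\ua d_0\cap\ua y\subseteq V$, showing that $Y$ is a strong $d$-space.

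The argument is essentially routine once the transfer idea is set up; the only subtle point is ensuring one does not conflate the orders on $X$ and $Y$. The key leverage comes from $r\circ s=\mathrm{id}_Y$, which is exactly what makes the ``push to $X$ then pull back'' loop close up on both ends, first to verify the hypothesis of the strong $d$-space condition in $X$ (via $r$), and then to propagate its conclusion back to $Y$ (via $s$). No obstacle of substance is expected.
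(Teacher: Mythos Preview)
Your proof is correct and follows essentially the same approach as the paper: transport the hypothesis to $X$ via the section (the paper's $g$, your $s$), apply the strong $d$-space property there, then pull the conclusion back to $Y$ via the retraction (the paper's $f$, your $r$). The paper phrases the final step as $\ua d\cap\ua y=\ua f(\ua g(d)\cap\ua g(y))\subseteq\ua f(f^{-1}(V))\subseteq V$, whereas you argue it elementwise using $s$ and $r\circ s=\mathrm{id}_Y$, but these are the same argument.
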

\begin{proof}
	
It is well-known that a retract of a $T_0$ space is $T_0$ (cf. \cite{Engelking}). Assume that $X$ is a strong $d$-space and $Y$ a retract of $X$. Then there are continuous mappings $f:X\longrightarrow Y$ and $g:Y\longrightarrow X$ with $f\circ g={\rm id}_Y$. For any $D\in \mathcal D(Y)$, $y\in Y$ and $V\in \mathcal O(Y)$, if $\bigcap_{d\in D}\ua d\cap \ua y\subseteq V$, then $\bigcap_{d\in D}\ua g(d)\cap \ua g(y)\subseteq f^{-1}(V)$ and $\{g(d) : d\in D\}\in \mathcal D(X)$. Since $X$ is a strong $d$-space, there is $d\in D$ such that $\ua g(d)\bigcap \ua g(y)\subseteq f^{-1}(V)$, and hence $\ua d\bigcap \ua y=\ua f(\ua g(d)\bigcap \ua g(y))\subseteq \ua f(f^{-1}(V))\subseteq V$. Thus $X$ is a strong $d$-space.
\end{proof}

Let $\{X_i:i\in I\}$ be a family of $T_0$ spaces. Then for each $j\in I$, $X_j$ is a retract of the product space $\prod_{i\in I}X_i$. So if $\prod_{i\in I}X_i$ is a strong $d$-space, then by Proposition \ref{strong d-space retract}, any factor space $X_j$ ($j \in I$) is a strong $d$-space.

Concerning the other properties of strong $d$-spaces, we have the following three questions.

\begin{question}\label{strong d-space prod question} Is the product space of an arbitrary family of strong $d$-spaces again a strong $d$-space?
\end{question}

\begin{question}\label{strong d-space function question}
	Let $X$ be a $T_0$ space and $Y$ a strong $d$-space. Is the function space $\mathbf{Top}_0(X, Y)$ equipped with the pointwise convergence  topology a strong $d$-space? (cf. Proposition \ref{d-space function})
\end{question}

\begin{question}\label{super H-sober reflective question}
Is $\mathbf{S}$-$\mathbf{Top}_d$ reflective in $\mathbf{Top}_0$?
\end{question}

\section{Co-sober spaces}

In \cite{MJA_2004}, the concepts of $k$-\emph{irreducible} sets and \emph{co}-\emph{spaces} were introduced, and for a co-sober space, the alterative conditions for the dual Hofmann-Mislove Theorem were given (see \cite[Theorem 8.10]{MJA_2004}).

\begin{definition}\label{co-sober} (\cite{MJA_2004}) Let $X$ be a $T_0$ space and $G\in \mk (X)$.
 \begin{enumerate}[\rm (1)]
 \item $G$ is called $k$-\emph{irreducible} if it cannot be written as the union of two proper compact saturated subsets, or equivalently if $G=K_1\bigcup K_2$ for $K_1, K_2\in \mk (X)$ implies $G=K_1$ or $G=K_2$. The set of all $k$-irreducible compact saturated sets of $X$ is denoted by $\mathrm{K}$-$\ir(X)$.
 \item $X$ is called \emph{co}-\emph{sober} if for any  $K\in\mathrm{K}$-$\ir(X)$, there is a (unique) point $k\in X$ such that $K=\ua k$. The category of all co-sober spaces with continuous mappings is denoted by $\mathbf{Co}$-$\mathbf{Sob}$.
\end{enumerate}
\end{definition}

For co-sober spaces, Escard\'{o}, Lawson and Simpson \cite[Problem 9.7]{MJA_2004} asked wether every sober space is co-sober. In \cite{WX18}, Wen and the first author answered this problem in the negative by constructing a counterexample, and proved that saturated subspaces and closed subspaces of a co-sober space are co-sober.

\begin{theorem}\label{closure co-sober} (\cite{WX18}) Let $(X, \tau)$ be a co-sober space and $c : (X, \tau) \rightarrow (X, \tau)$ a continuous mapping. If $c : X \rightarrow X$ is a closure operator with respect to the specialization order of $X$, that is, $x\in \cl \{c(x)\}$ and $c(c(x))=c(x)$ for any $x\in X$. Then the subspace $(c(X), \tau|c(X))$ is a co-sober space.
\end{theorem}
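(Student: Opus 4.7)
The plan is to lift each $k$-irreducible compact saturated subset $K$ of $Y:=c(X)$ to a $k$-irreducible compact saturated subset $\widehat K$ of $X$, apply co-sobriety of $X$ to write $\widehat K=\ua_X x$, and then push back down to identify $K=\ua_Y c(x)$. I proceed in three stages.

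\textbf{Lifting.} Given $K\in \mathrm{K}$-$\ir (Y)$, I would set $\widehat K=\ua_X K\in\mk(X)$ and record two identities that will be used throughout: (i)~$\widehat K\cap Y=K$, because $K$ is upper in $Y$ and contained in $\widehat K$; (ii)~$c(\widehat K)\subseteq K$, since for $z\in\widehat K$ one has $z\ge k$ for some $k\in K\subseteq Y$, so $c(z)\ge c(k)=k$, while $c(z)\in Y$ and $K$ is upper in $Y$, forcing $c(z)\in K$.

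\textbf{Irreducibility of the lift.} Suppose $\widehat K=L_1\cup L_2$ with $L_1,L_2\in\mk(X)$, and set $M_i:=\ua_Y c(L_i)\in\mk(Y)$. The identity $K=M_1\cup M_2$ is routine: each $k\in K$ lies in some $L_i$ and then $k=c(k)\in c(L_i)\subseteq M_i$, while each $y\in M_i$ lies above some $c(\ell)$ with $\ell\in L_i\subseteq\ua_X K$, hence above some element of $K$, and therefore belongs to $K$. Applying $k$-irreducibility of $K$ in $Y$, say $K=M_1=\ua_Y c(L_1)$; then $\widehat K=\ua_X K=\ua_X c(L_1)$. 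The decisive observation is that extensivity of $c$ forces $c(L_1)\subseteq L_1$: for $\ell\in L_1$ one has $c(\ell)\ge\ell$, and $L_1$ is upper. Hence $\widehat K=\ua_X c(L_1)\subseteq L_1\subseteq\widehat K$, so $\widehat K=L_1$. Thus $\widehat K\in\mathrm{K}$-$\ir (X)$.

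\textbf{Pushing down.} By co-sobriety of $X$ there is a unique $x\in X$ with $\widehat K=\ua_X x$. Set $y:=c(x)\in Y$. Then $K=\ua_Y y$: if $k\in K\subseteq\widehat K$ then $k\ge x$, whence $k=c(k)\ge c(x)=y$ by monotonicity of $c$; conversely, if $y'\in Y$ and $y'\ge y\ge x$ then $y'\in\widehat K\cap Y=K$. Uniqueness of $y$ is automatic since $Y$ inherits $T_0$ from $X$.

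The heart of the argument, and the main obstacle, is the middle step: one must convert a tentative decomposition of $\widehat K$ inside $X$ into a decomposition of $K$ inside $Y$ and then reverse the passage. The reversal is precisely where all three properties of $c$ enter — continuity (so that $c(L_i)$ is compact and $M_i\in\mk(Y)$), idempotency (so that $c(k)=k$ for $k\in Y$), and above all extensivity ($c(\ell)\ge\ell$), which supplies the absorption $c(L_1)\subseteq L_1$ for saturated $L_1$.
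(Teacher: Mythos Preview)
The paper does not supply its own proof of this theorem; it is stated with a citation to \cite{WX18} and left unproved in the present text, so there is nothing here to compare your argument against line by line.

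That said, your proof is correct and complete. The three-stage structure (lift $K$ to $\widehat K=\ua_X K$, verify $k$-irreducibility of $\widehat K$ via the induced decomposition $K=M_1\cup M_2$ with $M_i=\ua_Y c(L_i)$, then push the generator $x$ of $\widehat K$ down to $y=c(x)$) is the natural route, and every step checks out. In particular, the point you flag as ``decisive'' --- that extensivity $c(\ell)\ge\ell$ together with saturation of $L_1$ gives $c(L_1)\subseteq L_1$, whence $\widehat K=\ua_X c(L_1)\subseteq L_1$ --- is exactly the hinge of the argument, and you have identified it correctly. The auxiliary verifications ($c:X\to Y$ continuous so $c(L_i)$ is compact in $Y$; $c$ fixes $Y$ pointwise by idempotency; $\widehat K\cap Y=K$) are all sound.

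This is almost certainly the argument in \cite{WX18} as well, since the structure is essentially forced by the data.
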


\begin{theorem}\label{Smyth co-sober} (\cite{WX18}) Let $X$ be a $T_0$ space. If the Smyth power space $P_S(X)$ is co-sober, then $X$ is co-sober.
\end{theorem}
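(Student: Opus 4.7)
Let $K\in \mathrm{K}$-$\ir(X)$; the goal is to exhibit $x_0\in X$ with $K=\ua x_0$. The plan is to lift $K$ to a $k$-irreducible compact saturated subset $\mathcal{K}$ of $P_S(X)$, apply the co-sobriety of $P_S(X)$ to obtain a point $K_0\in \mk(X)$ with $\mathcal{K}=\ua_{P_S(X)}K_0$, and finally pin down $K_0$ to the form $\ua x_0$ with $K=K_0$.

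The natural candidate is the saturation
\[
\mathcal{K}=\ua_{P_S(X)}\xi_X(K)=\{K'\in \mk(X) : K'\subseteq \ua x \text{ for some } x\in K\}
\]
of the image of $K$ in $P_S(X)$ under the canonical embedding $\xi_X$ (Remark \ref{xi continuous}). Since $\xi_X$ is continuous, $\xi_X(K)$ is compact in $P_S(X)$, and the saturation of a compact set is compact saturated, so $\mathcal{K}\in \mk(P_S(X))$. A direct check gives $\bigcup \mathcal{K}=K$: every $\ua x$ with $x\in K$ lies in $\mathcal{K}$, while every $K'\in \mathcal{K}$ is contained in some $\ua x\subseteq K$.

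The main step is to show that $\mathcal{K}$ is $k$-irreducible in $P_S(X)$. This rests on the standard observation that for any $\mathcal{A}\in \mk(P_S(X))$, $\bigcup\mathcal{A}\in \mk(X)$: given an open cover $\{U_j\}_{j\in J}$ of $\bigcup\mathcal{A}$ in $X$, compactness of each $K'\in \mathcal{A}$ produces a finite $F_{K'}\subseteq J$ with $K'\in \Box(\bigcup_{j\in F_{K'}}U_j)$, so $\{\Box(\bigcup_{j\in F}U_j):F\subseteq J \text{ finite}\}$ is an open cover of $\mathcal{A}$ in $P_S(X)$ admitting a finite subcover, which yields a finite subcover of $\bigcup\mathcal{A}$ in $X$. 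Now suppose $\mathcal{K}=\mathcal{K}_1\cup \mathcal{K}_2$ with $\mathcal{K}_i\in \mk(P_S(X))$. Then $K=\bigcup\mathcal{K}=\bigcup \mathcal{K}_1\cup \bigcup\mathcal{K}_2$ with each $\bigcup\mathcal{K}_i\in \mk(X)$, and the $k$-irreducibility of $K$ forces (WLOG) $K=\bigcup\mathcal{K}_1$. For any $K'\in \mathcal{K}$, pick $x\in K$ with $K'\subseteq \ua x$; choosing $K''\in \mathcal{K}_1$ with $x\in K''$ gives $K'\subseteq \ua x\subseteq K''$, whence $K'\in \mathcal{K}_1$ by saturation of $\mathcal{K}_1$ in $P_S(X)$ (i.e.\ downward-closure under $\subseteq$). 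Hence $\mathcal{K}=\mathcal{K}_1$.

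Finally, applying co-sobriety of $P_S(X)$ to $\mathcal{K}\in \mathrm{K}$-$\ir(P_S(X))$ yields a $K_0\in \mk(X)$ with $\mathcal{K}=\ua_{P_S(X)} K_0=\{K'\in \mk(X) : K'\subseteq K_0\}$. From $K_0\in \mathcal{K}$ one gets $K_0\subseteq \ua x_0$ for some $x_0\in K$, while $\ua x\in \mathcal{K}$ for every $x\in K$ gives $\ua x\subseteq K_0$, hence $K\subseteq K_0$; the chain $K\subseteq K_0\subseteq \ua x_0\subseteq K$ collapses to $K=K_0=\ua x_0$, so $X$ is co-sober. The crux of the argument is the transfer of $k$-irreducibility from $K\subseteq X$ to $\mathcal{K}\subseteq P_S(X)$, which hinges on the union-of-compacts observation above.
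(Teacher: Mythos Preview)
Your proof is correct and follows the natural line of attack: push $K$ forward along the canonical embedding $\xi_X$, saturate, verify $k$-irreducibility in $P_S(X)$ via the ``$\bigcup$ of a compact family in $P_S(X)$ lies in $\mk(X)$'' lemma, and then pull the resulting principal upset back to $X$. Each step checks out, including the saturation argument showing $\mathcal K_1=\mathcal K$ (where the key observation $K'\subseteq \ua x\subseteq K''$ with $K''\in\mathcal K_1$ saturated yields $K'\in\mathcal K_1$). Note that the present paper does not supply its own proof of this theorem---it merely cites \cite{WX18}---so there is no in-paper argument to compare against; your approach is, however, the standard one and is almost certainly what the cited reference does as well.
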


\begin{question}\label{Smyth co-sober question}  (\cite{WX18})
Is the Smyth power space $P_S(X)$ of a co-sober space $X$ again co-sober?
\end{question}

The following are more questions on co-sober spaces.

\begin{question}\label{kernel co-sober question} Let $(X, \tau)$ be a co-sober space and $k : (X, \tau) \rightarrow (X, \tau)$ a continuous mapping for which $k : X \rightarrow X$ is a kernel operator with respect to the specialization order of $X$, that is, $k(x)\in \cl \{x\}$ and $k(k(x))=k(x)$ for any $x\in X$. Is the subspace $(k(X), \tau|k(X))$ co-sober?
\end{question}

\begin{question}\label{co-sober retract question} Is a retract of a co-sober space co-sober?
\end{question}

\begin{question}\label{co-sober prod question} Is the product space of a family of co-sober spaces co-sober?
\end{question}

\begin{question}\label{co-sober reflectiv question}
Is $\mathbf{Co}$-$\mathbf{Sob}$ reflective in $\mathbf{Top}_0$?
\end{question}

\noindent{\bf References}

\end{document}